\newtheorem{theorem}{Theorem}
\newtheorem{definition}{Definition}
\newtheorem{lemma}{Lemma}
\newtheorem{cor}{Corollary}
\newtheorem{remark}{Remark}
\newtheorem{prop}{Proposition}
\newcommand{\be}{\begin{enumerate}}
\newcommand{\ee}{\end{enumerate}}
\newcommand{\beq}{\begin{equation}}
\newcommand{\eeq}{\end{equation}}
\def\N{{\mathbb{N}}}
\def\Z{{\mathbb{Z}}}
\def\MN{{\mathbb{N}}}
\def\MA{{\mathbb{A}}}
\def\MB{{\mathbb{B}}}
\def\MM{{\mathbb{M}}}
\title{What does a group algebra of a free group ``know'' about the group?}
\author{Olga Kharlampovich\footnote{Hunter College, CUNY, Supported by the PSC-CUNY  award, jointly funded by The Professional Staff Congress and The City University of New York and by a grant from the Simons Foundation. }  and Alexei Myasnikov \footnote{Stevens Institute of Technology, supported by NSF grant DMS-1201379}}
\date{}
\begin{document}

\maketitle

\begin{abstract} 
We describe  solutions to the  problem of elementary classification in the class of group algebras of free groups. We will show that unlike free groups, two group algebras of free groups over infinite fields are elementarily equivalent if and only if the groups are isomorphic and the fields are equivalent in the weak second order logic.   We will show that  the set of all free bases of a free group $F$ is 0-definable in the group algebra $K(F)$ when $K$ is an infinite field,  the set of geodesics is definable, and many geometric  properties of $F$ are definable in $K(F)$. Therefore $K(F)$ ``knows" some very important information about $F$. We will show that  similar results hold  for group algebras of limit groups.  
\end{abstract}

\tableofcontents
\section{Introduction}

This is the third paper (after \cite{assoc}, \cite{KMeq}) in a series of papers on the project on model theory of algebras outlined in our talk  at the ICM in Seoul \cite{ICM}. 
 
Tarski's problems on groups, rings, and other algebraic structures were very inspirational  and led to some  important developments in modern algebra and model theory. 
Usually solutions to  these   problems for some  structure clarify the most fundamental algebraic properties of the structure and give perspective on the expressive power of the first-order logic in the structure.  We mention here results on first-order theories   of algebraically closed fields,  real closed fields \cite{Tarski2},  fields of $p$-adic  numbers \cite{Ax-Kochen, Ershov}, abelian groups and modules \cite{Szmielewa, Baur}, boolean algebras \cite{Tarski3, Ershov3},  free and hyperbolic groups \cite{KM1, KM2, Sela, Sela8}, free associative algebras \cite{assoc}, group algebras \cite{KMeq}.

In this paper we show  that unlike free groups, two group algebras of free groups over infinite fields are elementarily equivalent if and only if the groups are isomorphic and the fields are equivalent in the weak second order logic (Theorem \ref{th:12}).    We will prove that for a finitely generated free group $F$ and infinite field $K$, if   $L$  is field and $H$ is  a group such that there is an element in $H$ with finitely generated centralizer  in $L(H)$,  the group algebras $K(F)$ and $L(H)$ are elementarily equivalent ($K(F)\equiv L(H)$) if and only if 
 $H$ is isomorphic to $F$  and the fields are equivalent in the weak second order logic (Theorem \ref{th:eeq}).
Notice that for any  field, the Diophantine problem, and, therefore, the theory of a  group algebra of a torsion free hyperbolic group is undecidable  \cite{KMeq}. Notice also that for any  field $K$ the first-order theory of the group algebra $K(F)$ is not stable because $K(F)$ is an integral domain (but not a field). We will prove the definability of bases of $F$ in $K(F)$  when $K$ is an infinite field (Theorem \ref{th:bases}).  In contrast to this, we proved that primitive elements, and, therefore, free bases are not definable in a free group of rank greater than two  \cite{KMdef}. This was a solution of an old Malcev's problem, and the  same question about group algebras is also attributed to Malcev.  We will also show that  the set of geodesics is definable, and many geometric  properties of $F$ are definable in $K(F)$ (Theorem \ref{th:geom}). Hence $K(F)$ ``knows" some very important information about $F$ (this information can be expressed in the first order language) that $F$ itself does not know. We will show that  similar results hold  for group algebras of  limit groups (Theorems \ref{th:geom}, \ref{th:eeq2},   \ref{th:limit}, \ref{th:geom1}). Recall, that a limit group is a finitely generated fully residually free group, namely a group $G$ such that for any finite set of distinct elements in $G$ there is a homomorphism into a free group that is injective on this set.  We will also prove (Theorem \ref{th:subring}) definability of finitely generated subrings, subgroups, ideals and submonoids in $K(G)$  for a non-abelian limit group $G$ and infinite field $K$ (in some cases $K$ has to be  interpretable in $\N$), and prove that the theory of $K(G)$  
does not admit quantifier elimination to boolean combinations of formulas from $\Pi_n$ or $\Sigma_n$ (with constants from $K(G)$) for any bounded $n$ (Theorem \ref{th:elimination}).

Our main tool in proving these is the method of first-order interpretation. 
We remind here some precise definitions and several known facts that may not be very familiar to algebraists. 

A language $L$ is a triple $(\mathcal{F}_L, \mathcal{P}_L, \mathcal{C}_L)$, where $\mathcal{F}_L = \{f, \ldots \}$ is a  set of functional symbols $f$ coming together with their arities  $n_f \in \mathbb{N}$,  $\mathcal{P}_L$ is  a set of relation (or predicate) symbols $\mathcal{P}_L = \{P, \ldots \}$ coming together with their arities  $n_P \in \mathbb{N}$,  and a set of constant symbols $ \mathcal{C}_L = \{c, \ldots\}$. Sometimes we write $f(x_1, \ldots,x_n)$ or $P(x_1, \ldots,x_n)$ to show that $n_f = n$ or $n_P = n$.  Usually we denote variables by small letters $x,y,z, a,b, u,v, \ldots$, while the same symbols with bars $\bar x,  \ldots$ denote tuples of the corresponding variables $\bar x = (x_1, \ldots,x_n), \ldots $. A structure in the language $L$ (an $L$-structure) with the base set $A$ is sometimes denoted by $\mathbb{A} = \langle A; L\rangle$ or simply by 
$\mathbb{A} = \langle A; f, \ldots, P, \ldots,c, \ldots \rangle$.  For a given structure $\mathbb{A}$ by $L(\mathbb{A})$ we denote the language of $\mathbb{A}$. Throughout this paper we use frequently the following languages that we fix now: the language of groups $\{ \cdot, ^{-1}, 1\}$, where $\cdot$ is the binary multiplication symbol, $^{-1}$ is the symbol of inversion, and $1$ is the constant symbol for the identity; and the language of unitary rings $\{+, \cdot, 0, 1\}$ with the standard symbols for addition, multiplication, and the additive identity $0$.  When the language $L$ is clear from the context, we follow the standard algebraic practice and denote the structure $\mathbb{A} = \langle A; L\rangle$ simply by $A$. For example, we refer to a field  $\mathbb{F}  = \langle F; +,\cdot,0,1 \rangle$ simply  as $F$, or to a group $\mathbb{G} = \langle G; \cdot,^{-1},1\rangle$ as $G$, etc.

Let $\mathbb{B} = \langle B ; L(\mathbb{B})\rangle$ be a structure. A subset $A \subseteq B^n$ is called {\em definable} in $\mathbb{B}$ if there is a formula $\phi(x_1, \ldots,x_n)$ in $L(\mathbb{B})$ such that  $A = \{(b_1,\ldots,b_n) \in B^n \mid \mathbb{B} \models \phi(b_1, \ldots,b_n)\}$. In this case one says that $\phi$ defines $A$ in $\mathbb{B}$.  Similarly, an operation $f$ on the subset  $A$ is definable in $\mathbb{B}$ if its graph is definable in $\mathbb{B}$.  An $n$-ary predicate $P(x_1,\ldots ,x_n)$ is definable in $\mathbb{B}$ if the set $\{(b_1,\ldots ,b_n)\in\mathbb{B}^n| P(b_1,\ldots ,b_n) \ { \rm is\  true}\}$
is definable in $\mathbb{B}$.

In the same vein  an algebraic structure $\mathbb{A} = \langle A ;f, \ldots, P, \ldots, c, \ldots\rangle$  is definable in $\mathbb{B}$ if there is a definable subset $A^* \subseteq  B^n$ and operations $f^*, \ldots, $ predicates $P^*, \ldots, $ and constants $c^*, \ldots, $ on $A^*$ all definable in $\mathbb{B}$ such that the structure $\mathbb{A}^* = \langle A^*; f^*, \ldots, P^*, \ldots,c^*, \ldots, \rangle$ is isomorphic to $\mathbb{A}$.
For example, if $Z$ is the center of a group $G$  then it is definable as a group  in $G$, the same for the center of a ring.

\begin{definition} \label{de:interpretable} An algebraic  structure $\mathbb{A} = \langle A ;f, \ldots, P, \ldots, c, \ldots\rangle$  is interpretable  in a structure $\mathbb{B}$  if there is a  subset $A^* \subseteq B^n$  definable in $\mathbb{B}$, an equivalence relation $\sim$ on $A^*$ definable in $\mathbb{B}$, operations  $f^*, \ldots, $ predicates $P^*, \ldots, $ and constants $c^*, \ldots, $ on the quotient set $A^*/{\sim}$ all interpretable in $\mathbb{B}$ such that the structure $\mathbb{A}^* = \langle A^*/{\sim}; f^*, \ldots, P^*, \ldots,c^*, \ldots, \rangle$ is isomorphic to $\mathbb{A}$.
 \end{definition}

Interpretation of $\mathbb{A}$ in a class of structures $\mathcal{C}$ is    {\em uniform}  if the formulas that interpret   $\mathbb{A}$ in a structure $\mathbb{B}$ from $\mathcal{C}$   are the same for every structure $\mathbb{B}$ from $\mathcal{C}$.

Sometimes, to define a subset or interpreted a structure $\mathbb{A}$ in a   given structure $\mathbb{B}$ one has to add some elements, say from a subset $P \subseteq B$  to the language $L = L(\mathbb{B})$ as  new constants (we denote the resulting language by $L(\mathbb{B})_P$).    In this case we say that $\mathbb{A}$ is {\em interpretable   with parameters} $P$  in $\mathbb{B}$.  {\it Uniform interpretability with parameters} in a class   $\mathcal{C}$ means that the formulas that interpret   $\mathbb{A}$ in a structure $\mathbb{B}$ from $\mathcal{C}$   are the same for every structure $\mathbb{B}$ from $\mathcal{C}$ and parameters in each such  $\mathbb{B}$ come from  subsets uniformly definable in $\mathcal{C}$.
If we want to emphasize that the interpretability is without constants we say {\em absolutely} interpretable or {\it $0$-interpretable}. In most cases we have the absolute interpretability, so if not said otherwise, throughout the paper interpretability means absolute  interpretability.  
We write $\MA \to_{int} \MB$ when $\MA$ is absolutely interpretable in $\MB$. The key result about interpretability is the following lemma.
\begin{lemma}\cite{Hodges}\label{Hodges}
If $\mathbb{A}$ is interpretable in $\mathbb{B}$ with parameters $P$,
then for every formula $\psi(\bar{x})$ of $L(\mathbb{A})$, one can
effectively construct a formula $\psi^{*}(\bar{z},P)$ of
$L(\mathbb{B})$ such that for any $\bar{a} \in \mathbb{A}$, one has
that $\mathbb{A} \models \psi(\bar{a})$ if and only if $\mathbb{B}
\models [\psi(\bar{a})]^*$.

\end{lemma}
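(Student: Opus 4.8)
The plan is the standard induction on the complexity of $\psi$, using the data of the interpretation. Fix this data: since $\mathbb{A}$ is interpretable in $\mathbb{B}$ with parameters $P$, there is a formula $\delta(\bar z,P)$ of $L(\mathbb{B})$, with $\bar z$ an $n$-tuple, defining $A^*\subseteq B^n$; a formula $\varepsilon(\bar z_1,\bar z_2,P)$ defining the equivalence relation ${\sim}$ on $A^*$; for each $m$-ary relation symbol $R$ of $L(\mathbb{A})$ a formula $\theta_R(\bar z_1,\dots,\bar z_m,P)$ defining (a ${\sim}$-invariant set of representatives of) $R^*$; for each $m$-ary function symbol $f$ a formula $\theta_f(\bar z_1,\dots,\bar z_m,\bar z_0,P)$ defining the graph of $f^*$; for each constant $c$ a formula $\theta_c(\bar z_0,P)$ defining $c^*$; and there is an isomorphism $\mu\colon A^*/{\sim}\ \to\ \mathbb{A}$. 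Given $a\in\mathbb{A}$, call any $\bar b\in A^*$ with $\mu([\bar b])=a$ a \emph{representative} of $a$; then $[\psi(\bar a)]^*$ denotes $\psi^*(\bar b_1,\dots,\bar b_k,P)$ for any choice of representatives $\bar b_i$ of the $a_i$, and a preliminary point to settle is that this does not depend on the choice — which will fall out of the induction because each $\theta$ above is built to be ${\sim}$-invariant.

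First I would reduce to the case in which function symbols occur in $\psi$ only inside atomic subformulas of the form $f(y_1,\dots,y_m)=y_0$ or $c=y_0$: every $L(\mathbb{A})$-formula is logically equivalent to such a formula, obtained by repeatedly replacing an innermost nontrivial term $f(\bar t)$ appearing in $\psi$ by a fresh variable $y$ and prefixing $\exists y$ together with the conjunct $f(\bar t)=y$. Fix an assignment $y\mapsto\bar z_y$ sending each individual variable of $L(\mathbb{A})$ to an $n$-tuple of fresh variables of $L(\mathbb{B})$, all these tuples pairwise disjoint and disjoint from the constants $P$. The translation $\psi\mapsto\psi^*$ is then defined by recursion: $(f(y_1,\dots,y_m)=y_0)^*:=\theta_f(\bar z_{y_1},\dots,\bar z_{y_m},\bar z_{y_0},P)$; $(c=y_0)^*:=\theta_c(\bar z_{y_0},P)$; $(y_1=y_2)^*:=\varepsilon(\bar z_{y_1},\bar z_{y_2},P)$; $(R(y_1,\dots,y_m))^*:=\theta_R(\bar z_{y_1},\dots,\bar z_{y_m},P)$; the operation $*$ commutes with $\neg$ and $\wedge$ (hence with all Boolean connectives); and $(\exists y\,\phi)^*:=\exists\bar z_y\,(\delta(\bar z_y,P)\wedge\phi^*)$, relativizing quantifiers to $A^*$. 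This construction is manifestly effective, so it remains only to prove correctness.

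The core claim, proved by induction on $\psi$, is: for every formula $\psi(y_1,\dots,y_k)$ of $L(\mathbb{A})$ and all $\bar b_1,\dots,\bar b_k\in A^*$, setting $a_i=\mu([\bar b_i])$, one has $\mathbb{A}\models\psi(a_1,\dots,a_k)$ iff $\mathbb{B}\models\psi^*(\bar b_1,\dots,\bar b_k,P)$. The atomic cases are precisely the statements that $\mu$ transports $R^*,f^*,c^*$ and equality-modulo-${\sim}$ to $R,f,c$ and equality in $\mathbb{A}$, i.e.\ that $\mu$ is an $L(\mathbb{A})$-isomorphism; ${\sim}$-invariance of each $\theta$ gives independence of representatives at the base level, hence throughout. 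The connective cases are immediate from the induction hypothesis. For the quantifier, $\mathbb{A}\models\exists y\,\phi(y,\bar a)$ iff $\phi(a,\bar a)$ holds in $\mathbb{A}$ for some $a\in\mathbb{A}$; since $\mu$ is surjective this is equivalent to the existence of $\bar b\in A^*$ with $\mathbb{A}\models\phi(\mu([\bar b]),\bar a)$; by the induction hypothesis this is equivalent to the existence of $\bar b\in A^*$ with $\mathbb{B}\models\phi^*(\bar b,\bar b_1,\dots,\bar b_k,P)$, i.e.\ to $\mathbb{B}\models\exists\bar z_y\,(\delta(\bar z_y,P)\wedge\phi^*)$ evaluated at the $\bar b_i$. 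Specializing to sentences (or absorbing the free variables as constants) yields the lemma, with $\psi^*(\bar z_1,\dots,\bar z_k,P)$ the formula just built. I expect the only steps needing genuine (if routine) care to be the term-elimination reduction and the bookkeeping of replacing each $L(\mathbb{A})$-variable by a fresh disjoint $n$-tuple while avoiding the parameters $P$; everything else is formal.
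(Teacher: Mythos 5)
The paper does not prove this lemma itself — it cites it to Hodges's \emph{Model Theory} — so there is no in-paper argument to compare against. Your proof is the standard textbook argument (unnest terms, translate atomics via the defining formulas, relativize quantifiers to $\delta$, induct on complexity), it is correct, and it is essentially the proof one finds in the cited source.
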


We show that for a free group of  finite rank $F=F(X)$, the arithmetic  $\N = \langle N, +,\cdot, 0\rangle$, and the weak second order theory of the infinite field $K$  are all interpretable in $K(F)$,  uniformly in $K$ and $X$. Here we say that the weak second order theory of  a structure $B$ is  interpretable in $K(F)$ if the first-order structure $HF(B)$ of hereditary finite sets over $B$, or equivalently, the list superstructure $S(B,\N)$, is interpretable in $K(F)$ (see Section \ref{se:weak-second-order} for precise definitions).  On the other hand,  one can construct  an interpretation $K(F)^\diamond$ of $K(F)$ in $S(K,\N)$. Since $S(K,\N)$ is interpretable in $K(F)$ this gives the interpretation  $K(F)^{\diamond\diamond}$ of $K(F)$ in itself. In fact, one can interpret any "constructive over $K$" algebra $L$ in $S(K,\N)$, but usually this interpretation $L^\diamond$ and the original algebra $L$ are not related much. However, in the case of $K(F)$ we showed that there { are}  definable isomorphisms between $K(F)^{\diamond\diamond}$ and $K(F)$
 and between $S(K,\N)$ and $S(K,\N)^{\diamond\diamond}$ (in model theory this situation is described in terms of {\em bi-interpretability}, see Section \ref{biint}). This bi-interpretability gives a powerful tool to study arbitrary rings which are first-order equivalent to a given algebra $K(F)$ viewed as a ring (Theorem 11).

 In Section 2 we  obtain results about the ring of Laurent polynomials and group algebras of commutative transitive groups and groups satisfying Kaplansky's unit conjecture. In Section 3 we  prove results about algebras elementarily equivalent to a group algebra of a left orderable  hyperbolic group. In Section 4 we discuss unique factorization in $K(F)$ and prove some of the main technical lemmas. In Section 5 we  prove bi-interpretability and most of the main results mentioned above. In Section \ref{elimination}  we study arithmetical hierarchy and elimination of quantifiers in  group rings of limit groups and  prove Theorem \ref{th:elimination}.
 \section{Group Rings}
\label{se:group-rings}

\subsection{Laurent polynomials}

Denote by $K[x,x^{-1}]$ the ring of Laurent polynomials in one variable $x$ over a field $K$.
\begin{lemma}\label{natural}
Let $K$ be an infinite  field. Then the following holds. \begin{enumerate}\item  For a non-invertible element $P \in K[x,x^{-1}]$ which is the sum of at least three monomials,  the polynomial ring $K[P]$ is definable in $K[x,x^{-1}]$ with parameter $P$ uniformly in $K$  and $P$. 
\item For any non-invertible elements $P,Q\in K[x,x^{-1}]$ which are the sums of at least three monomials, the canonical isomorphism of interpretations $\mu _{P,Q}:  K[P]\rightarrow K[Q]$ is definable in $K[x,x^{-1}]$ uniformly in $K,P,Q.$
\item $K[t]$ is $0$-interpretable in $K[x,x^{-1}]$.\end{enumerate} 

\end{lemma}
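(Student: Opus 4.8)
\emph{Proof proposal.} The plan is to reduce all three parts to a single first-order description of the subring $K[P]\subseteq K[x,x^{-1}]$, plus one piece of commutative algebra. As a preliminary I would note that the scalar field $K$ is $0$-definable in $K[x,x^{-1}]$: the units are exactly the monomials $\lambda x^{n}$ ($\lambda\in K^{*}$, $n\in\Z$), and $\lambda x^{n}+1$ is a unit or $0$ if and only if $n=0$, so
\[
K=\{0\}\cup\{u: u \text{ is a unit and } (u+1 \text{ is a unit or } u+1=0)\}
\]
is defined by a parameter-free formula of the language of rings, uniformly in $K$; I write ``$c\in K$'' in formulas as shorthand for this. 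Writing $P=\sum_{i=1}^{k}\lambda_i x^{n_i}$ with $n_1<\dots<n_k$ and $k\ge 3$, the hypothesis ``at least three monomials'' guarantees that $P-c$ is never a unit times a single monomial (for any $c\in\bar K$), equivalently $[K(x):K(P)]\ge 2$. The characterization I would aim to prove is
\[
f\in K[P]\iff \forall c\in K\ \exists d\in K\ \exists g\in K[x,x^{-1}]\ \bigl((P-c)\,g=f-d\bigr),
\]
i.e.\ $f$ is congruent to a scalar modulo $P-c$ for every $c\in K$. This right-hand side is first-order with parameter $P$ and uniform in $K$ and $P$, so everything in part (1) reduces to the equivalence.

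For ``$\Rightarrow$'': if $f=a(P)$ with $a\in K[t]$, then $f-a(c)=a(P)-a(c)$ is divisible by $P-c$ inside $K[P]\subseteq K[x,x^{-1}]$, so $d=a(c)$ works. For ``$\Leftarrow$'' I would use two facts. First, $K(P)\cap K[x,x^{-1}]=K[P]$: if $h=a(P)/b(P)$ with $a,b\in K[t]$ coprime and $h\in K[x,x^{-1}]$, then $b(P)\mid a(P)$ in the UFD $K[x,x^{-1}]$; over $\bar K$ any root $\beta$ of $b$ would force $P-\beta$ to share a factor with $a(P)$, but $P-\beta$ has a nonzero root $\xi\in\bar K$ (here the three-monomials hypothesis enters), so $a(\beta)=a(P)(\xi)=0$, contradicting $\gcd(a,b)=1$; hence $b$ is constant and $h\in K[P]$. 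Second, a specialization argument: for $c\in K$ outside a finite set, $K[x,x^{-1}]/(P-c)$ is a finite reduced $K$-algebra (the fibre of $x\mapsto P$), and ``$f$ acts on it as a scalar'' is a Zariski-closed condition on $c$ whose coefficients are polynomials in $c$; since it holds for all $c$ in the infinite set $K$ it holds identically, hence over the generic fibre, which is the ring of fractions of $K[x,x^{-1}]$ with denominators in $K[P]\setminus\{0\}$, i.e.\ the field $K(x)$ viewed as a $K(P)$-algebra; so multiplication by $f$ is $K(P)$-scalar on $K(x)$, forcing $f\in K(P)$. Combining the two facts, $f\in K(P)\cap K[x,x^{-1}]=K[P]$.

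Part (2) follows from the same machinery. For $f\in K[P]$ the scalar $d$ with $(P-c)\mid(f-d)$ is \emph{unique}, since a nonconstant $P-c$ cannot divide a nonzero constant; moreover $d=a(c)$ when $f=a(P)$. Hence $\mu_{P,Q}(a(P))=a(Q)$ is pinned down by the formula: $h=\mu_{P,Q}(f)$ iff $f\in K[P]$, $h\in K[Q]$, and for every $c\in K$ there is a \emph{common} $d\in K$ with $(P-c)\mid(f-d)$ and $(Q-c)\mid(h-d)$. Plugging in the formula from part (1), this is first-order with parameters $P,Q$, uniform in $K,P,Q$, and visibly defines the graph of the canonical $K$-algebra isomorphism. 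For part (3) I would run the standard quotient construction: the set $D$ of non-invertible sums of $\ge 3$ monomials is $0$-definable (``sum of $\le 2$ monomials'' means $P=u_1+u_2$ with each $u_i$ a unit or $0$) and nonempty ($1+x+x^{2}\in D$); interpret $K[t]$ on $\{(P,f): P\in D,\ f\in K[P]\}$ modulo $(P,f)\sim(Q,h)\iff\mu_{P,Q}(f)=h$, with $(P,f)\oplus(Q,h)=(P,\,f+\mu_{Q,P}(h))$, $(P,f)\otimes(Q,h)=(P,\,f\cdot\mu_{Q,P}(h))$, and constants the classes of $(P,0)$ and $(P,1)$. The map sending the class of $(P,a(P))$ to $a(t)$ is a well-defined ring isomorphism onto $K[t]$, and all defining formulas are those of parts (1)--(2), so this is a $0$-interpretation.

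The hard part is the ``$\Leftarrow$'' direction of the characterization in part (1): the naive description $K[P]=\{\sum_j a_j P^j\}$ is an infinite union and is not first-order, and the whole point is to replace it by the bounded congruence condition above. Making that honest requires the specialization/Zariski-closedness argument and crucially the infiniteness of $K$ (over a finite field a polynomial identity in $c$ can fail while all its instances $c\in K$ hold), together with the bookkeeping about the finitely many degenerate $c$ for which $P-c$ loses width or acquires a multiple root, and the identity $K(P)\cap K[x,x^{-1}]=K[P]$, where the ``at least three monomials'' hypothesis is used in an essential way.
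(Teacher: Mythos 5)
Your proof is correct and uses the same first-order formula $\forall c\in K\,\exists d\in K\,(P-c\mid f-d)$ as the paper's, but the argument that this formula actually carves out $K[P]$ (the hard, ``$\Leftarrow$'' direction) is genuinely different. The paper's proof is elementary and computational: it writes each Laurent polynomial $Q=\bar Q x^{-k(Q)}$ with $\bar Q\in K[x]$ and $k(Q)$ minimal, orders these pairs lexicographically, and does a double induction on $(k(Q),m(Q))$; the delicate case $k(Q)>0,\ k(P)=0$ is ruled out by analyzing a triangular system of linear equations over $K$ and choosing a bad $\alpha$, which is where the infiniteness of $K$ and the degree $\ge 2$ of $P$ (i.e.\ $\ge 3$ monomials) enter. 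Your proof instead decomposes the implication into two pieces of commutative algebra: (a) the intersection identity $K(P)\cap K[x,x^{-1}]=K[P]$, proved over $\bar K$ by observing that $P-\beta$ always has a nonzero root (this is where ``$\ge 3$ monomials'' is used --- it guarantees $P-\beta$ is never a unit), and (b) a specialization argument: ``$f$ acts as a scalar mod $P-c$'' is a closed condition with coefficients rational in $c$; vanishing on the infinite set $K$ forces vanishing generically, hence on the generic fibre $K[x,x^{-1}]\otimes_{K[P]}K(P)=K(x)$, so $f\in K(P)$. This is conceptually cleaner and makes visible exactly where each hypothesis is used, at the cost of invoking fibres, generic points, and the (correct but non-trivial) identification of the total ring of fractions of $K[x,x^{-1}]$ over $K[P]$ with $K(x)$; the paper's route stays entirely inside explicit polynomial arithmetic. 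Parts (2) and (3) are essentially identical in both treatments (your variant in (2) with a common $d$ is equivalent to the paper's biconditional since $d$ is unique; the paper's restriction to $\alpha,\beta\ne 0$ is unnecessary). One minor slip worth flagging: your parenthetical ``equivalently $[K(x):K(P)]\ge 2$'' is not actually equivalent to ``$P-c$ is never a unit for any $c\in\bar K$'' (e.g.\ $P=1+x^2$ has $[K(x):K(P)]=2$ yet $P-1$ is a unit); but you never use the degree statement, only the no-unit statement, so the proof is unaffected.
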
 
\begin{proof}  
 1) Every element in $K[x,x^{-1}]$ can be uniquely written as $(\Sigma\alpha _ix^{n_i})/x^k$, where  $n_i\geq 0$ for each $i$, $k\geq 0$ and minimal possible for this element. 
 We can partially order elements of $K[x,x^{-1}]$  according to the left lexicographic order of  pairs $(k, m),$  where $m=max \{n_i\}.$ If $Q$ is a polynomial, we denote these numbers $k(Q), m(Q).$

By \cite{KMeq}, Lemma 8 (or Lemma \ref{le:units} below),  $K$ is definable in $K[x,x^{-1}]$. Since infinite cyclic group satisfies Kaplansky's unit conjecture (see Section \ref{2.3})  monomials are exactly the units in $K[x,x^{-1}]$. Fix a non-unit element $P \in K[x,x^{-1}]$ that is the sum of at least three monomials. This property is definable, namely $P$ must be a non-unit and not the sum of two units.  
We will show that the following formula  with the parameter $P$ defines the ring of polynomials $K[P]$ in $K[x,x^{-1}]$:
 $$
 \psi(Q,P) = \forall \alpha \in K \exists \beta \in K (P-\alpha \mid Q-\beta).
 $$
 Indeed, any $Q \in K$ satisfies the formula for $\beta = Q$. Suppose now  $Q\in K[P] \smallsetminus K$ then for any $\alpha \in K$, $Q = (P-\alpha)Q_1 +\beta$ for some $\beta \in K$. Hence, $P-\alpha \mid Q-\beta$, so $Q$ satisfies $\psi(Q,P)$ in $K[x,x^{-1}]$. 
 
 On the other hand, if $K[x,x^{-1}]\models \psi(Q,P)$ for some $Q \in K[x,x^{-1}]$, then for a given $\alpha \in K$ one has $Q -\beta = (P-\alpha)Q_0$ for some $\beta \in K$ and $Q_0 \in K[x,x^{-1}]$. For another $\alpha_1\in K$ there exists $\beta_1 \in K$ such that  
 $(P-\alpha_1)\mid Q-\beta_1$. Now, 
 $$
 Q-\beta = (P-\alpha)Q_0 =  (P-\alpha_1 +\alpha_1- \alpha) Q_0= 
 (P-\alpha_1)Q_0 +(\alpha_1-\alpha)Q_0.
 $$
  Hence 
  $$Q-\beta_1 = Q-\beta +\beta -\beta_1 =  (P-\alpha_1)Q_0 +(\alpha_1-\alpha)Q_0 + \beta -\beta_1. 
   $$
    It follows that $P-\alpha_1 \mid (\alpha_1-\alpha)Q_0 + \beta -\beta_1$, and  $P-\alpha_1 \mid Q_0 + (\beta -\beta_1) (\alpha_1-\alpha)^{-1}$, therefore  $K[x,x^{-1}] \models \psi(Q_0,P)$.  We need this below to apply induction.

    Represent $P=\bar Px^{-k(P)},\ Q=\bar Qx^{-k(Q)}, Q_0=\bar Q_0x^{-k(Q_0)}.$ Then we have 
    
    $$\bar Qx^{-k(Q)} -\beta = (Px^{-k(P)}-\alpha)Q_0x^{-k(Q_0)}. $$
    This equality implies the following: \begin{itemize} \item If $k(Q)=0$, then $k(Q_0)=0$ and $k(P)=0$, 
\item   If $k(Q)>0$,  $k(Q_0)>0$ and $k(P)>0$, then $k(Q)=k(P)+k(Q_0).$
\end{itemize}    
    We will now prove by  induction on $k(Q)$ that  if $K[x,x^{-1}]\models \psi(Q,P)$, then $Q\in K[P]$.
    
    The basis of induction is the case $k(Q)=0.$ In this case $Q,P$ and $Q_0$ belong to $K[x]$, and we can use the argument of \cite{assoc}, Lemma 6. Namely
     that the leading term in $Q_0$ is smaller  than that one in $Q$.  Hence, by induction on $m(Q)$ in the case $k(Q)=0$,  $Q_0$ belongs to $K[P]$, so  does $Q$.
     
     Suppose now that $k(Q)>0$,  $k(P)>0$.  If $k(Q_0)=0$  or $k(Q_0)>0$, then $k(Q_0)<k(Q)$, and by  induction, $Q_0$ belongs to $K[P]$, so  does $Q$. 
     
     The last case to consider is $k(Q)>0$, $k(P)=0$,  so $P\in K[x]$ and has degree at least two as the sum of at least three monomials.   We will show that this case is impossible. In this case for any $\alpha \in K$, $k(Q_0)>0$  and we denote $k=k(Q)=k(Q_0).$ We have $(P-\alpha)\bar Q_0x^{-k}=\bar Qx^{-k}-\beta ,$
     and both $\bar Q, \bar Q_0$ have non-trivial constant term. We rewrite this equation as 
     $$(P-\alpha)\bar Q_0=\bar Q-\beta x^k.$$ If $m(Q)\leq k$, then $m(Q_0)\leq k-2$, because $P$  has degree at least two. Since $Q$ satisfies the formula, we have to be able to find, given $P$ and $\bar Q$, for any $\alpha \in K$, coefficients $y_0,\ldots ,y_t$, $t\leq k-2$, of  $\bar Q_0$ and $\beta \in K.$  This gives a system of $k+1$ linear equations with at most $k$ variables ($t+2$ variables) that should have a solution for any $\alpha$.  
 If we write equations for coefficients of the monomials $x^t,\ldots ,x, 1$ respectively, we obtain a triangular system of equations in variables $y_t,\ldots ,y_0$ with elements $(p_0-\alpha)$ on the diagonal (here $p_0$ is the constant term of $P$).    
Since $K$ is infinite, one can choose $\alpha$ such that the system does not satisfy the equation for the coefficients of $x^{t+1}$ and, therefore, does not have  a solution.  We can now apply induction on $m(Q)$ for Laurent polynomials $Q$ with fixed $k=k(Q)$.  If $Q$ satisfies the formula, and $k(Q)>0$, then $m(Q)>k(Q)$ and   $m(Q_0)<m(Q)$, moreover, $Q_0$ satisfies the formula. But $Q_0$ cannot satisfy the formula by  induction because $k(Q_0)=k(Q)$. Therefore, by contradiction, $Q$ cannot satisfy the formula, and the case $k(Q)>0$, $k(P)=0$ is impossible.
     
2)  Let $f(P)\in K[P]$ and $g(Q)\in K[Q]$. Since the field $K$ is infinite,
$g(Q)=\mu _{P,Q}(f(P))$ if and only if $$\forall \alpha \in K-\{0\}\forall \beta \in K-\{0\} ((P-\alpha)|(f(P)-\beta)\iff (Q-\alpha)|(g(Q)-\beta )).$$ Indeed, this formula states that the values of $f(P)$ and $f(Q)$  are the same for any $\alpha \in K-\{0\}.$

3) follows from 2). Namely, using the isomorphism $\mu _{P,Q}$ one can glue all interpretations $K[P]$ into one equivalence class isomorphic to $K[t]$. Therefore $K[t]$ is $0$-interpretable in $K[x,x^{-1}]$.
\end{proof}

\begin{lemma} \label{le:limit1} Let $R=K[x_1,x_1^{-1},\ldots ,x_n,x_n^{-1}]$ be a ring of Laurent polynomials over an infinite field, $g=x_1^{\gamma _1}\ldots x_n^{\gamma _n}$, where $\gamma _1,\ldots ,\gamma _n\in \Z$,  a  non-trivial monomial, that is not a proper power, then $K[g,g^{-1}]$ is definable in $R$.
\end{lemma}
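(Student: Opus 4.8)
The plan is to write down an explicit first-order formula, using $g$ as a parameter, that carves out $K[g,g^{-1}]$ inside $R$. First, by Lemma~\ref{le:units} the field $K$ is definable in $R$, so one may freely use quantifiers ranging over $K$. Next I would reduce to the case where $g$ is one of the Laurent generators: since $g=x_1^{\gamma_1}\cdots x_n^{\gamma_n}$ is a non-trivial monomial that is not a proper power, its exponent vector $\bar\gamma=(\gamma_1,\dots,\gamma_n)$ is a primitive vector of $\mathbb{Z}^n$, hence extends to a $\mathbb{Z}$-basis $\bar\gamma,\bar\beta_2,\dots,\bar\beta_n$ of $\mathbb{Z}^n$. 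Setting $y_1=g=x^{\bar\gamma}$ and $y_i=x^{\bar\beta_i}$ one gets $R=K[y_1^{\pm1},\dots,y_n^{\pm1}]$ with $g=y_1$, so it suffices to prove: for any Laurent polynomial ring $R=K[x_1^{\pm1},\dots,x_n^{\pm1}]$ over an infinite field, $K[x_1,x_1^{-1}]$ is definable in $R$ with parameter $x_1$.

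For the reduced statement I would show that the formula
\[
\chi(f)\ :=\ \forall\alpha\in K\ \exists\beta\in K\ \bigl(\,x_1-\alpha \mid f-\beta\,\bigr)
\]
defines $K[x_1,x_1^{-1}]$ in $R$. The inclusion $K[x_1,x_1^{-1}]\subseteq\{f:\chi(f)\}$ is straightforward: for $f\in K[x_1,x_1^{-1}]$ and $\alpha\in K\setminus\{0\}$, evaluation at $x_1=\alpha$ is defined and $f-f(\alpha)\in(x_1-\alpha)K[x_1,x_1^{-1}]\subseteq(x_1-\alpha)R$, so $\beta=f(\alpha)$ works (and for $\alpha=0$ any $\beta$ works, since $x_1$ is a unit and divides everything). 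For the converse, write $R=A[x_2^{\pm1},\dots,x_n^{\pm1}]$ with $A=K[x_1^{\pm1}]$; this is a free $A$-module on the set $\mathcal M$ of Laurent monomials in $x_2,\dots,x_n$, with $1\in\mathcal M$. Expand $f=\sum_{m\in\mathcal M}f_m\,m$ with $f_m\in A$, all but finitely many equal to $0$. Since $x_1-\alpha\in A$, divisibility of $f-\beta$ by $x_1-\alpha$ in $R$ is equivalent to $A$-coefficientwise divisibility, i.e. $(x_1-\alpha)\mid f_m$ in $A$ for every $m\neq1$ and $(x_1-\alpha)\mid f_1-\beta$ in $A$. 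Hence $\chi(f)$ forces, for each $m\neq1$, that $f_m$ vanishes at every nonzero element of $K$; as $K$ is infinite and a nonzero element of $A=K[x_1,x_1^{-1}]$ has only finitely many roots, $f_m=0$ for all $m\neq1$, so $f=f_1\in K[x_1,x_1^{-1}]$. Finally, rewriting $\chi$ with $x_1$ replaced by $g=y_1$ gives a formula defining $K[g,g^{-1}]$ in $R$.

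Most of this is routine: the reduction is elementary arithmetic in $\mathbb{Z}^n$, and the analysis of $\chi$ uses only the free $K[x_1^{\pm1}]$-module structure of $R$ together with the finiteness of the zero set of a nonzero Laurent polynomial over an infinite field. The one step requiring care is the converse inclusion, where one must be certain that divisibility in the large ring $R$ is no weaker than coefficientwise divisibility over $A$; this is exactly the freeness of $R$ as an $A$-module, which is available precisely because $g$ is not a proper power (equivalently, $\langle g\rangle$ is a direct summand of $\mathbb{Z}^n$, so $R\cong K[g^{\pm1}]\otimes_K K[M]$ for a complement $M$, giving the same module picture without changing generators).
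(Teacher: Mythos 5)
Your proof is correct and takes a genuinely different, and in fact cleaner, route than the paper's. The paper works with the formula $\psi(Q,g)=\exists a\in A\,\forall\alpha\in K\,\exists\beta\in K\,(g-\alpha\mid Q-\beta a)$ where $A=\{g^n:n\in\Z\}$ is first shown to be definable, and then runs a fairly intricate induction on a custom partial ordering of Laurent polynomials (by degree after clearing denominators) to prove that only elements of $K[g,g^{-1}]$ satisfy it. You instead use the simpler formula $\forall\alpha\in K\,\exists\beta\in K\,(g-\alpha\mid f-\beta)$; these two are actually equivalent, since $(g-\alpha)\mid(g^{-k}-\alpha^{-k})$ for $\alpha\neq 0$ lets one absorb the factor $a=g^{-k}$ into $\beta$, so you never need the auxiliary set $A$. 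Your converse direction replaces the paper's induction with a structural argument: after a $\mathrm{GL}_n(\Z)$ change of Laurent generators making $g$ a coordinate (valid exactly because the exponent vector is primitive, i.e.\ $g$ is not a proper power), $R$ is a free $A$-module over $A=K[g^{\pm1}]$ with a monomial basis containing $1$, divisibility by $g-\alpha\in A$ is coefficientwise, and then the infiniteness of $K$ together with the finiteness of the zero set of a nonzero Laurent polynomial kills all off-$A$ coefficients. This is shorter and more transparent than the paper's argument, and it isolates cleanly both where infiniteness of $K$ and where primitivity of $g$ are used; the paper's reduction, by contrast, only normalizes the exponents to be nonnegative and does not exploit the tensor decomposition $R\cong K[g^{\pm1}]\otimes_K K[M]$.
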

\begin{proof} Let $A=\{g^{n}, n\in\Z\}$. Then $A$ is definable as  the set of invertible elements $x$ in $R$ and not in $K$ such that $(g-1)|(x-1)$.
The formula
 $$
 \psi(Q,g) = \exists a\in A\forall \alpha \in K \exists \beta \in K \ (g-\alpha \mid Q-\beta a).
 $$
defines $K[g,g^{-1}]$ in $R$. Indeed, if $Q\in K[g,g^{-1}]$, then for some $k$, $g^kQ\in K[g]$ and $g^kQ=(g-1)Q_0+\beta$, where $\beta\in K$. Then we take $a= g^{-k}.$
 
%Recall that the Eucledian norm $|Q|$ is defined as the largest exponent of $g^kQ\in K[g]$ for the minimal such $k$. 

We order  elements of $R$ the following way. Let $q\in R$, and let $y$ be a monomial in $K[x_1,\ldots ,x_n]$ of minimal degree
such that $qy\in K[x_1,\ldots ,x_n]$. Then $q_1\leq q_2$ if $(y_1,y_1q)\leq (y,q),$ where pairs of polynomials $(y,qy)$ ordered left-lexicographically, and arbitrary monomial order used in $K[x_1,\ldots ,x_n]$. Pairs corresponding to polynomials in $K[x_1,\ldots ,x_n]$ have  first component of degree zero.
 
 On the other hand, without loss of generality we can assume $g=x_1^{\gamma _1}\ldots x_n^{\gamma _n}$, where $\gamma _1,\ldots ,\gamma _n\in\  N$. if $R\models \psi(Q,g)$ for some $Q \in R$, then there exists $a=g^{-k}$ such that for a given $\alpha \in K$ one has $Q -\beta a= (g-\alpha)Q_0$ for some $\beta \in K$ and $Q_0 \in R$. 
 Notice that we need the same minimal degree monomial $y$,  so that $yQ$ and $yQ_0$ are polynomials, and the degree of $Q_0$ is less than degree of $Q$. Therefore $Q$  is greater than $Q_0$ in our order.

 For another $\alpha_1\in K$ there exists $\beta_1 \in K$ such that  
 $(g-\alpha_1)\mid Q-\beta_1a$. Now, 
 $$
 Q-\beta a = (g-\alpha)Q_0 =  (g-\alpha_1 +\alpha_1- \alpha) Q_0= 
 (g-\alpha_1)Q_0 +(\alpha_1-\alpha)Q_0.
 $$
  Hence 
  $$Q-\beta_1 a = Q-\beta a+\beta a-\beta_1 a=  (g-\alpha_1)Q_0 +(\alpha_1-\alpha)Q_0 + (\beta -\beta_1)a 
   $$
    It follows that $g-\alpha_1 \mid (\alpha_1-\alpha)Q_0 + (\beta -\beta_1)a$, and  $g-\alpha_1 \mid Q_0 + (\beta -\beta_1) (\alpha_1-\alpha)^{-1}a$, therefore  $F[X] \models \psi(Q_0,g)$.  Hence, by induction, $Q_0$ belongs to $K[g,g^{-1}]$, so  does $Q$.

\end{proof}

\subsection{Group rings of commutative transitive torsion-free groups}\label{se:1}

Recall that a group is commutative-transitive (or CT) if the commutation relation  is transitive on non-trivial elements of $G$. Another way to describe this property is to say that centralizers of non-trivial elements in $G$ are abelian. 
There are many classes of CT groups: abelian, free, torsion-free hyperbolic \cite{G}, limit groups \cite{BB}, total relatively hyperbolic (because they are CSA by \cite{O}), solvable Baumslag-Solitar groups \cite{Wu}, free solvable groups \cite{Wu}, etc. We refer to papers \cite{F,Wu} on general theory of CT-groups.

\begin{lemma} \label{pr:centralizer-hyp} (\cite{KMeq}, Lemma 9) Let $G$ be a  commutative transitive torsion-free group and $K$ a field.  Then for every $g\in G$ the centralizer $C_{K(G)}(g)$  of $g$ in $K(G)$ is isomorphic to the group algebra $K(C_G(g))$ of the centralizer $C_G(g)$ of $g$ in $G$. In particular, if $C_G(g)$ is free abelian  with basis $\{t_i, i\in I\}$ then the centralizer $C_{K(G)}(g)$ is isomorphic to the ring of the Laurent  polynomials $K[t_i, t_i^{-1}, i\in I]$.
\end{lemma}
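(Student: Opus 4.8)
The plan is to compute the centralizer $C_{K(G)}(g)$ directly using the fact that $K(G)$ is a free $K$-module with basis $G$, and that the conjugation action of $g$ on $G$ partitions $G$ into orbits. First I would write an arbitrary element $u \in K(G)$ in its canonical form $u = \sum_{h} \alpha_h h$ with $\alpha_h \in K$ and finite support. The condition $gu = ug$, i.e. $g u g^{-1} = u$, translates into $\sum_h \alpha_h (g h g^{-1}) = \sum_h \alpha_h h$, so comparing coefficients in the $K$-basis $G$ we get that $\alpha$ is constant on each orbit of the conjugation action $h \mapsto g h g^{-1}$ on $G$, and (since the support is finite) the support is a union of finite orbits. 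So $C_{K(G)}(g)$ is the $K$-span of the orbit sums of the finite conjugation orbits of $g$ on $G$.

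The key step is then to identify which elements $h \in G$ lie in a finite $g$-conjugation orbit, and to show the span of those orbit sums is exactly $K(C_G(g))$. Here is where commutative transitivity and torsion-freeness enter. If $h$ lies in a finite orbit, then $g^n h g^{-n} = h$ for some $n \geq 1$, i.e. $g^n \in C_G(h)$, so $g^n$ and $h$ commute. Also $g^n$ and $g$ commute. If $g^n \neq 1$ (which holds since $G$ is torsion-free and $g \neq 1$ — the case $g=1$ being trivial), commutative transitivity forces $g$ and $h$ to commute, i.e. $h \in C_G(g)$; conversely any $h \in C_G(g)$ is fixed by conjugation, hence in a (singleton, finite) orbit. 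Thus the finite orbits are exactly the singletons $\{h\}$ with $h \in C_G(g)$, and $C_{K(G)}(g) = \bigoplus_{h \in C_G(g)} K h$, which is visibly the subring $K(C_G(g)) \subseteq K(G)$. The embedding $K(C_G(g)) \hookrightarrow K(G)$ is a $K$-algebra homomorphism with image this centralizer, giving the claimed isomorphism $C_{K(G)}(g) \cong K(C_G(g))$.

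For the last sentence, suppose $C_G(g)$ is free abelian with basis $\{t_i : i \in I\}$. Then the group algebra $K(C_G(g))$ of a free abelian group on $\{t_i\}$ is, by the universal property of group algebras together with the universal property of the Laurent polynomial ring (it is the group algebra of $\mathbb{Z}^{(I)}$), canonically isomorphic to $K[t_i, t_i^{-1} : i \in I]$: send each basis element $t_i$ to the variable $t_i$, which is invertible in the Laurent ring, and extend. Combining with the previous paragraph gives $C_{K(G)}(g) \cong K[t_i, t_i^{-1} : i \in I]$.

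The only mildly delicate point — the one I would treat as the ``main obstacle'' — is the coefficient-comparison argument being valid, i.e. that $G$ really is a $K$-basis of $K(G)$ and that finiteness of support is preserved under conjugation (it is, since $h \mapsto ghg^{-1}$ is a bijection of $G$); and the invocation of commutative transitivity must be set up so that the auxiliary element $g^n$ is genuinely nontrivial, which is exactly where torsion-freeness is used. Everything else is bookkeeping. Since this is quoted as Lemma~9 of \cite{KMeq}, I would in the write-up simply cite that and sketch the orbit argument as above.
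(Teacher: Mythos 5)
Your argument is correct. The paper itself does not give a proof of this lemma; it is cited as Lemma~9 of \cite{KMeq}, so there is no in-text proof to compare against. Your direct orbit computation is a sound self-contained derivation: comparing coefficients in the $K$-basis $G$ shows that the coefficient function of any $u \in C_{K(G)}(g)$ must be constant on $\langle g\rangle$-conjugation orbits, finiteness of support forces those orbits to be finite, a finite orbit forces some $g^n$ ($n\geq 1$, hence $g^n\neq 1$ by torsion-freeness) to centralize $h$, and then commutative transitivity (applied to the nontrivial elements $g$, $g^n$, $h$, with the cases $g=1$ or $h=1$ being trivial) collapses every finite orbit to a singleton in $C_G(g)$. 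This identifies $C_{K(G)}(g)$ with the $K$-span of $C_G(g)$ inside $K(G)$, i.e.\ with $K(C_G(g))$, and the identification of $K(\mathbb{Z}^{(I)})$ with $K[t_i,t_i^{-1}: i\in I]$ is standard.
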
 

\begin{cor} 
If $G$ is a torsion-free hyperbolic group then for any non-trivial $g\in G$ the centralizer  $C_{K(G)}(x)$ is isomorphic to the ring of Laurent polynomials in one variable $K[t,t^{-1}]$, where $t$ is a generator of the infinite cyclic group $C_G(g)$ - the centralizer of $g$ in $G$.
\end{cor}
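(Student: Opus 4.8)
The plan is to obtain this immediately from Lemma \ref{pr:centralizer-hyp} once we pin down the shape of $C_G(g)$. First I would recall that every torsion-free hyperbolic group is commutative transitive (this is classical, see \cite{G}), so the hypotheses of Lemma \ref{pr:centralizer-hyp} are met for $G$ and any field $K$. Applying that lemma, for any non-trivial $g \in G$ we obtain a ring isomorphism $C_{K(G)}(g) \cong K(C_G(g))$.

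Next I would invoke the standard structural fact about hyperbolic groups: the centralizer of an element of infinite order is virtually cyclic, and since $G$ is torsion-free, a virtually infinite cyclic subgroup is in fact infinite cyclic. Hence for non-trivial $g \in G$ the subgroup $C_G(g)$ is infinite cyclic, i.e. free abelian of rank one, and we may write $C_G(g) = \langle t \rangle$ for a generator $t$ of infinite order — which is exactly the $t$ in the statement.

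Finally, the ``in particular'' clause of Lemma \ref{pr:centralizer-hyp}, applied with the singleton basis $\{t\}$ of the free abelian group $C_G(g)$, gives $C_{K(G)}(g) \cong K[t,t^{-1}]$, the ring of Laurent polynomials in one variable, as claimed. I do not expect any genuine obstacle here: all the substance is contained in Lemma \ref{pr:centralizer-hyp}, and the only additional input is the classical description of centralizers in torsion-free hyperbolic groups as infinite cyclic groups.
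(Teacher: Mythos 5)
Your proof is correct and follows exactly the route the paper intends: apply Lemma~\ref{pr:centralizer-hyp} (after noting that torsion-free hyperbolic groups are CT) together with the classical fact that centralizers of non-trivial elements in torsion-free hyperbolic groups are infinite cyclic. The paper states the corollary without further proof precisely because these are the only ingredients needed.
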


\begin{theorem} \label{th:undec-group-rings} (\cite{KMeq}, Theorem 8) Let $K$ be a field,  $G$ a   torsion-free group, and $g \in G$ such that 
\begin{itemize}
\item the centralizer $C_G(g)$ is a countable free abelian  group;
\item   $C_G(g^k) = C_G(g)$ for any non-zero integer $k$. 
\end{itemize}
 Then  the arithmetic ${\mathbb N}$ is  equationally interpretable in the group algebra $K(G)$,   the Diophantine problem and, therefore,  the first-order theory $Th(K(G))$ of  $K(G)$ is undecidable. 
\end{theorem}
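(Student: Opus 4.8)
The plan is to drive the whole problem into the centralizer of $g$ and thence into a Laurent polynomial ring. \emph{First}, I would compute $C_{K(G)}(g)$ explicitly, by a calculation parallel to the one behind Lemma \ref{pr:centralizer-hyp}, with the condition $C_G(g^k)=C_G(g)$ playing the role of commutative transitivity. Writing $\xi=\sum_{h\in G}a_h\,h\in K(G)$ with finite support, one has $g\xi g^{-1}=\sum_{h}a_{g^{-1}hg}\,h$, so $\xi$ commutes with $g$ precisely when the coefficient function $h\mapsto a_h$ is constant on each orbit of the conjugation action of $\langle g\rangle$; finiteness of the support forces every such orbit meeting $\mathrm{supp}(\xi)$ to be finite, i.e.\ $g^n\in C_G(h)$ for some $n\ge 1$, whence $h\in C_G(g^n)=C_G(g)$ and the orbit of $h$ is a singleton. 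Therefore $C_{K(G)}(g)=K(C_G(g))$, and since $C_G(g)$ is countable free abelian, say of rank $\kappa$ with $1\le\kappa\le\aleph_0$ (taking $g\ne 1$; if $g=1$ then $K(G)=K(C_G(g))$ already), with basis $\{t_i\}_{i\in I}$, this is the ring of Laurent polynomials $R=K[\,t_i,t_i^{-1}:i\in I\,]$, an integral domain. Crucially $C_{K(G)}(g)$ is the zero set in $K(G)$ of the single equation $xg=gx$, so $R$ is \emph{equationally} definable in $K(G)$ with parameter $g$; it remains to equationally interpret $\N$ in $R$, and there we may combine conditions freely into one system of equations because $R$ is a domain.

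\emph{Second}, I would build a copy of $\langle\Z;+,\cdot\rangle$ inside $R$ and then carve out $\N$. The units of $R$ are the trivial ones $c\prod_i t_i^{\gamma_i}$, $c\in K^\times$ (Kaplansky's unit conjecture for torsion-free abelian groups, cf.\ Section \ref{2.3}), and divisibility $a\mid b$ is the positive-existential relation $\exists c\,(b=ac)$; so, in the manner of Lemma \ref{le:limit1}, the powers $T=\{t^n:n\in\Z\}$ of $t:=t_1$ form a positive-existentially definable set, and $n\mapsto t^n$ transports addition of integers to ring multiplication ($t^at^b=t^{a+b}$), which is equational. Once $\langle\Z;+,\cdot\rangle$ is available, $\N$ is positive-existentially definable there by Lagrange's four-square theorem. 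Everything therefore reduces to a single relation: an equational description, in terms of the monomials $t^a,t^b,t^c$, of the graph of multiplication of exponents $\{(a,b,c):ab=c\}$. \textbf{This is the main obstacle.}

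\emph{Third}, to remove it I would run a Pell-equation argument of Denef type. Passing if necessary to a positive-existentially definable copy of a one-variable polynomial ring $K[s]\subseteq R$, I would examine solutions of $u^2-(s^2-1)w^2=1$: in characteristic $0$ these are exactly the pairs with $u+w\sqrt{s^2-1}=\pm(s+\sqrt{s^2-1})^{\,m}$, so $\deg_s w=m-1$ ties the solution index $m$ to a degree, and the degree is in turn recovered equationally from divisibility by powers of $s$; comparing two Pell families with incommensurable parameters and using that $\deg$ is additive on products yields a purely equational condition equivalent to $ab=c$ on exponents, while in positive characteristic one invokes the known function-field analogues of Denef's method. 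This gives an equational interpretation of $\N$ in $R$, and composing with the equation $xg=gx$ defining $R$ yields an equational interpretation of $\N$ in $K(G)$ (with $g$ and finitely many monomials as parameters, which suffices for the Diophantine problem with constants). Finally, by the Matiyasevich--Robinson--Davis--Putnam theorem the Diophantine problem of $\N$ is undecidable, hence so is that of $K(G)$ under this interpretation; and since an algorithm for $Th(K(G))$ would decide all existential sentences and thus solve that Diophantine problem, $Th(K(G))$ is undecidable. The genuinely delicate points are running the Pell/degree machinery uniformly over an \emph{arbitrary} field and keeping every clause positive-existential rather than merely first-order — the definability formulas used elsewhere in the paper, such as $\forall\alpha\in K\,\exists\beta\in K(\cdots)$, are not available here.
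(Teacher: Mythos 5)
Your reduction to the centralizer is correct and cleanly argued: the computation showing $C_{K(G)}(g)=K(C_G(g))$ from the hypothesis $C_G(g^k)=C_G(g)$ is exactly right (and nicely isolates what commutative transitivity is really doing in Lemma \ref{pr:centralizer-hyp}), and the observation that this centralizer, a Laurent polynomial ring $R$ over $K$, is \emph{equationally} cut out by the single equation $xg=gx$ is the essential first step. Step~2 is also correct: the units of $R$ are the trivial ones, so $T=\{t^n:n\in\Z\}$ is positive-existentially defined as the set of units $x$ with $(t-1)\mid(x-1)$ (you in fact do not need the ``$x\notin K$'' clause of Lemma \ref{le:limit1}, which would not be positive-existential anyway), and multiplication on $T$ transports to integer addition. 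You also correctly identify that the $\forall\alpha\exists\beta$ machinery used elsewhere in the paper is off-limits here; that diagnosis is the right one, and the overall shape of your argument is the shape the cited proof must have.

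The gap is in Step~3, and it is substantive. First, the proposed move ``pass to a positive-existentially definable copy of $K[s]\subseteq R$'' is not available: there is no positive-existential definition of a polynomial subring inside a Laurent polynomial ring (the only tool in sight is the non-equational formula of Lemma \ref{natural}). One should instead run the Pell machinery directly in $R$, say with $s=\tfrac{1}{2}(t+t^{-1})$ when the characteristic is not $2$, so that $s+\sqrt{s^2-1}=t$; then $X+\tfrac{1}{2}(t-t^{-1})Y$ is a unit of $R$ for every Pell solution $(X,Y)$, which forces the solution set over $R$ to be exactly $\pm\bigl(\tfrac{1}{2}(t^m+t^{-m}),\ \tfrac{t^m-t^{-m}}{t-t^{-1}}\bigr)$ and puts it in positive-existential bijection with $T$. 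Second, the assertion that the index $m$ is ``recovered equationally from divisibility by powers of $s$'' is not correct: $s^k\mid w$ measures the order of vanishing at $0$, not $\deg w$, and neither is the right invariant. The standard Denef route extracts and manipulates the index via reduction modulo $s-1$ (where $Y_m\equiv m$) together with the Chebyshev composition identities $X_{mn}=X_m(X_n)$, $Y_{mn}=Y_m(X_n)\,Y_n$, and keeping all of that positive-existential — and getting around the fact that in characteristic $p$ reduction modulo $s-1$ only sees $m$ modulo $p$, which is where the Pheidas-type function-field techniques come in — is precisely the content of the theorem that your sketch waves at but does not supply. So Steps~1–2 stand, but Step~3 both contains a false subsidiary claim and leaves out the Pell/Chebyshev core of the argument.
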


There are many groups that satisfy the conditions of Theorem \ref{th:undec-group-rings}:
free, torsion-free hyperbolic, toral relatively hyperbolic, free solvable, wreath products of  abelian  torsion-free groups by torsion free CT-groups (in this case centralizers are abelian),  etc.

\subsection{Groups satisfying Kaplansky's unit conjecture}\label{2.3}

For a ring $R$ denote by $R^\ast$ the set of units in $R$. Recall that a  group $G$ satisfies Kaplansky's unit conjecture  if for any field $K$  units in the group algebra $K(G)$ are only the obvious ones $\alpha\cdot g$, where $\alpha \in K \smallsetminus \{0\}$ and $g \in G$.  In our case, when $G$ is torsion-free, Kaplansky's unit conjecture implies that there are no zero divisors in $K(G)$.

\begin{lemma}(\cite{KMeq}, Lemma 8)\label{le:units}
Let $G$ be a a torsion-free group satisfying Kaplansky's unit conjecture. Then for any field $K$ the following hold:
\begin{itemize}
\item [1)] the field $K$ is a Diophantine subset of $K(G)$ uniformly for any field $K$;
\item [2)] the group $G$ is Diophantine interpretable in $K(G)$  uniformly for any group $G$.
\end{itemize}
\end{lemma}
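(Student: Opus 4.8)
The plan is to pin down the scalar subfield $K\cdot 1$ inside $K(G)$ by a positive existential formula, and then recover $G$ as a quotient of the multiplicative group of units. Two facts drive everything. First, by Kaplansky's unit conjecture the units of $K(G)$ are exactly the trivial units $\{\alpha g\mid \alpha\in K\setminus\{0\},\ g\in G\}$, and ``$x$ is a unit'' is Diophantine, namely $\exists y\,(xy=1)$. Second, as recalled above, torsion-freeness of $G$ together with the unit conjecture forces $K(G)$ to be a domain; this lets us rewrite a disjunction of equations $p=0\vee q=0$ as the single equation $pq=0$, and it is precisely this that keeps the descriptions below inside the \emph{Diophantine} fragment rather than merely the first-order definable one.

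For part (1), the key is a support-counting observation: if $\alpha\neq 0$ and $g\neq 1$ then $\alpha g-1$ has two-element support, hence is neither $0$ nor a unit; whereas if $g=1$ then $\alpha g-1=\alpha-1$ is either $0$ or a unit. Thus a unit $u$ lies in $K\cdot 1$ exactly when ``$u-1=0$ or $u-1$ is a unit''. Using the domain property to encode the two disjunctions by products, one checks that
\[
x\in K\cdot 1 \iff \exists y,z\ \bigl(\,x\,(xy-1)=0\ \wedge\ x\,(x-1)\,((x-1)z-1)=0\,\bigr),
\]
a formula mentioning neither $K$ nor $G$; this is the required uniform Diophantine definition of $K$.

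For part (2), note that $K^\ast\cdot 1=(K\cdot 1)\cap(\text{units})$ is again Diophantine. Interpret $G$ with domain the set of units, equivalence relation $u\sim v\iff\exists w\,(w\in K^\ast\cdot 1\wedge u=wv)$, group operations induced from multiplication and inversion in $K(G)$, and identity the class of $1$. Since $K^\ast\cdot 1$ is central in the group of units, the quotient is a group, and the assignment $\alpha g\mapsto g$ induces a group isomorphism from this quotient onto $G$ --- well-defined and injective because $K^\ast\cdot 1$ is exactly the kernel of $\alpha g\mapsto g$, and surjective and multiplicative by inspection. All defining formulas are positive existential and independent of $K$ and $G$, so $G$ is Diophantine interpretable in $K(G)$, uniformly.

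The only genuinely delicate point is part (1): ``$u$ is a unit'' cannot distinguish $\alpha\cdot 1$ from $\alpha g$, and the center of $K(G)$ is not $K$ in general (already for $G=\Z$), so some extra device is needed. The shift $u\mapsto u-1$ combined with the length argument breaks the symmetry, and the absence of zero divisors is what lets the resulting case analysis be packaged as polynomial equations.
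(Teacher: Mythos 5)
Your proof is correct and takes essentially the same approach as the paper: the shift-by-one trick for isolating $K$ among the units (using Kaplansky's unit conjecture and the observation that $\alpha g - 1$ has two-element support when $g\neq 1$, hence is neither zero nor a unit) is precisely the device the paper uses elsewhere (e.g.\ the formula $\phi(x)$ in the proof of Theorem~\ref{th:lo}), and the interpretation of $G$ as the quotient of the unit group by the central subgroup $K^{\ast}$ is the standard one. Your packaging of the disjunctions as products, justified by the domain property, is the right way to stay within the Diophantine fragment.
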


A group $G$ is called {\em left orderable}  (LO) if there is a linear ordering on $G$ which respects the left multiplication in $G$, i.e, for any $x,y,z  \in G$ if $x \leq y$ then $zx \leq zy$. Notice that LO group is torsion-free.

There are many LO groups: free groups, limit groups, right-angled Artin
groups and their subgroups \cite{Dub}, hence all special groups (actually,
they are even bi-orderable), finitely presented groups acting freely on $%
\Lambda$-trees \cite{KMS09}, free solvable groups (bi-orderable), locally
indicapable \cite{Agol}, in particular, torsion-free one relator groups \cite%
{Howie}, braid groups \cite{Deh}. Not every torsion-free
hyperbolic group is LO (\cite{BGW}, Theorem 8).   Every limit group  is a
subgroup of a non-standard free group (that is an ultrapower of a free group)  and, therefore, is bi-orderable
(see for instance in \cite{KMS09}).

For us the crucial fact on LO groups is that every LO group satisfies the Kaplansky's unit and zero-divisors conjectures.  This follows, for example, from \cite{P} and \cite{ST}.  Indeed LO groups have the so-called ``unique product property" by \cite{ST}, and groups with this property satisfy the Kaplansky's unit and zero-divisors conjectures \cite{P}.

\begin{lemma} 
\label{le:equivLO} 
If $G$ is  LO, and $H\equiv G$, then $H$ is LO.
\end{lemma}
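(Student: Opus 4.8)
The plan is to reduce left‑orderability to a \emph{local} criterion whose failure is witnessed by a finite amount of first‑order data, and then transfer that witness along $\equiv$. Recall the classical criterion of Ohnishi (see also Mura--Rhemtulla): a group $G$ is left‑orderable if and only if for every finite tuple $g_1,\dots,g_k$ of non‑identity elements of $G$ there exist signs $\epsilon_1,\dots,\epsilon_k\in\{1,-1\}$ such that the identity does not lie in the sub‑semigroup of $G$ generated by $g_1^{\epsilon_1},\dots,g_k^{\epsilon_k}$. (One direction is Zorn's lemma applied to partial positive cones; the other is immediate. Recall also that a group is left‑orderable iff it is right‑orderable, via $g\mapsto g^{-1}$, so the side on which we order is irrelevant.)

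First I would argue by contraposition: assume $G$ is LO, $H\equiv G$ in the language of groups, but $H$ is not LO. By Ohnishi's criterion applied to $H$, there are $h_1,\dots,h_k\in H\setminus\{1\}$ such that for \emph{every} sign vector $\epsilon\in\{1,-1\}^{k}$ the identity lies in the sub‑semigroup generated by $h_1^{\epsilon_1},\dots,h_k^{\epsilon_k}$. The key point is that each of these $2^{k}$ membership facts is witnessed by a \emph{single} nonempty word $w_\epsilon(y_1,\dots,y_k)$ in which every letter occurs only with positive exponent, and $w_\epsilon(h_1^{\epsilon_1},\dots,h_k^{\epsilon_k})=1$ in $H$; here $k$ and the finitely many words $w_\epsilon$ are genuinely finite objects. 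Hence the sentence of the language of groups
$$
\phi\ :=\ \exists x_1\cdots\exists x_k\;\Bigl(\,\bigwedge_{i=1}^{k}x_i\neq 1\ \wedge\!\!\bigwedge_{\epsilon\in\{1,-1\}^{k}}\!\! w_\epsilon\bigl(x_1^{\epsilon_1},\dots,x_k^{\epsilon_k}\bigr)=1\Bigr)
$$
holds in $H$, so it holds in $G$ because $G\equiv H$. Choosing witnesses $g_1,\dots,g_k\in G\setminus\{1\}$, for every $\epsilon$ the nonempty positive word $w_\epsilon$ shows that $1$ lies in the sub‑semigroup of $G$ generated by $g_1^{\epsilon_1},\dots,g_k^{\epsilon_k}$; thus no sign choice works for the finite set $\{g_1,\dots,g_k\}$, and Ohnishi's criterion gives that $G$ is not LO, contradicting the hypothesis. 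Therefore $H$ is LO.

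The only real content is Step~1: the criterion itself, and in particular the observation that ``the identity lies in the semigroup generated by $g_1^{\epsilon_1},\dots,g_k^{\epsilon_k}$'' is witnessed by one fixed relator $w_\epsilon$ — this is exactly what turns an a~priori infinite conjunction into the finite first‑order sentence $\phi$. Everything else is routine transfer along elementary equivalence. A slicker alternative avoiding Ohnishi's criterion: by the Keisler--Shelah theorem some ultrapower $G^{I}/\mathcal U$ is isomorphic to the corresponding ultrapower $H^{I}/\mathcal U$; ultrapowers of LO groups are LO (order the ultrapower by $[a]\le[b]\iff\{i:a_i\le_i b_i\}\in\mathcal U$), so $H^{I}/\mathcal U$ is LO, and $H$ is LO as a subgroup of it via the diagonal embedding.
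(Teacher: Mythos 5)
Your main argument is correct, and it takes a genuinely different route from the paper's. The paper proves the lemma by invoking Keisler--Shelah: since $G\equiv H$, some ultrapowers $G^{\mathbb N}/\omega$ and $H^{\mathbb N}/\omega$ are isomorphic as groups; one observes that the LO axiom, written in the expanded language $\{\cdot,\leq,1\}$, passes from $G$ to $G^{\mathbb N}/\omega$, transports the order along the group isomorphism to $H^{\mathbb N}/\omega$, and concludes that $H$ is LO as a subgroup. Your main proof instead applies the Ohnishi/Conrad local criterion and notes that the failure of left‑orderability in $H$ is witnessed by finitely many nontrivial elements $h_1,\dots,h_k$ together with one positive relator $w_\epsilon$ per sign vector $\epsilon$; this yields a single existential sentence $\phi$ of the pure group language satisfied by $H$, hence by $G$, contradicting $G$ being LO. This is more elementary (no ultraproducts, no Keisler--Shelah) and it actually shows more: that non‑left‑orderability is axiomatized by a set of $\Sigma_1$ sentences, so LO is a universal property of groups — which also explains, without further argument, why LO passes to subgroups. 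The ``slicker alternative'' you mention at the end is exactly the paper's proof, so you have both. One small point worth making explicit: the witnesses $g_1,\dots,g_k\in G$ obtained from $G\models\phi$ need not be distinct, but any assignment of signs to the underlying \emph{set} extends to a sign vector on the indices for which the corresponding $w_\epsilon$ still produces $1$ in the relevant subsemigroup, so the contradiction with the criterion for $G$ goes through unchanged.
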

\begin{proof}
The group $G$ is LO, hence one can view $G$ in the extended language $L =\{\cdot, \leq, 1\}$. The extended structure satisfies the LO axiom which can be written by a first-order sentence in $L$. Hence the ultrapower $ G^\N/\omega$ of $G$ over a non-principal $\omega$ is also LO, since the $L$-structures  $G$ and $ G^\N/\omega$  are elementarily equivalent. If $G \equiv H$ in the group language then by Keisler-Shelah isomorphism theorem their ultrapowers $G^\N/\omega$ and $H^\N/\omega$ are isomorphic  in the group language, so there is a group isomorphism, say $\theta: G^\N/\omega$ to $H^\N/\omega$. The image under $\theta$ of the ordering $\leq$ gives an ordering, say  $\leq'$ on  the set $H^\N/\omega$. Because $\theta$ is also a  group isomorphism the ordering $\leq'$ is compatible with the left multiplication in $H^\N/\omega$, so the group  $H^\N/\omega$ is LO.   It follows that $H$ as a subgroup of $H^\N/\omega$ is also LO, as claimed.
\end{proof}

 To prove the main result of this section we need another lemma.
 
 \begin{lemma} \label{le:1+h}
 Let $H$ be a torsion-free group and $L$ a field. Then for any $1 \neq h \in H$ the element $1-h$ is not invertible in the group algebra $L(H)$.
 \end{lemma}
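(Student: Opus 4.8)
The plan is to argue by contradiction via a degree-type argument that works for arbitrary torsion-free $H$. Suppose $1 - h$ were invertible in $L(H)$, say $(1-h)u = 1$ for some $u \in L(H)$. Let $C = \langle h \rangle \cong \MZ$ be the infinite cyclic subgroup generated by $h$ (it is infinite since $H$ is torsion-free and $h \neq 1$). The key observation is that the group algebra $L(H)$ is a free module over the subalgebra $L(C) = L[h, h^{-1}]$, the ring of Laurent polynomials, with basis a set of right coset representatives of $C$ in $H$. Write $u = \sum_{i} a_i g_i$ where $a_i \in L(C)$ and the $g_i$ run over distinct coset representatives (one of which we may take to be $1$, with coefficient $a_0$). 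Expanding $(1-h)u = 1$ and comparing the $L(C)$-components in this free module decomposition, we get $(1-h)a_i = 0$ for every $g_i \neq 1$, and $(1-h)a_0 = 1$.

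Now $L(C) = L[h,h^{-1}]$ is an integral domain (it is a Laurent polynomial ring over a field), and $1 - h$ is a nonzero element of it, so from $(1-h)a_i = 0$ we conclude $a_i = 0$ for all $i$ with $g_i \neq 1$; hence $u = a_0 \in L(C)$. But then $1 - h$ must be invertible already inside $L[h,h^{-1}]$, which is false: the units of $K[x,x^{-1}]$ over any field are exactly the monomials $\alpha x^n$ (the infinite cyclic group satisfies Kaplansky's unit conjecture, as recalled in Section \ref{2.3}), and $1 - h = 1 - x$ is a binomial, not a monomial. This contradiction shows $1 - h$ is not invertible in $L(H)$.

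The only slightly delicate point — the "main obstacle" in the write-up — is justifying that $L(H)$ is a free $L(C)$-module with basis given by coset representatives, and that comparing components is legitimate. This is a standard fact: choosing a transversal $T$ for the right cosets $Cg$ of $C$ in $H$, every element of $H$ is uniquely $c\,t$ with $c \in C$, $t \in T$, so every element of $L(H)$ is uniquely $\sum_{t \in T} a_t t$ with $a_t \in L(C)$ almost all zero, and left multiplication by any element of $L(C)$ acts coordinatewise on this decomposition; since $1 - h \in L(C)$, the equation $(1-h)u = 1$ does decompose coordinatewise as claimed. One should also note the harmless normalization that $1 \in T$, so that the constant $1$ on the right-hand side lives entirely in the $t = 1$ coordinate. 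With this in place the argument is complete, and it uses only that $h$ has infinite order and that Laurent polynomial rings over fields are domains with only monomial units.
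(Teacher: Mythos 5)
Your proof is correct, and it takes a genuinely different route from the paper's. The paper argues combinatorially and directly: from $(1-h)f=1$ with $f=\sum\alpha_i f_i$, it isolates a group element equal to $1$, rewrites the equation as an equality of two $L$-linear combinations of distinct group elements, and extracts a permutation $\sigma$ with $f_i = h f_{\sigma(i)}$; a cycle of length $d$ then forces $h^d=1$, contradicting torsion-freeness. Your argument instead reduces structurally to the rank-one case by viewing $L(H)$ as a free left module over $L(C)$, $C=\langle h\rangle$, via a right transversal containing $1$; since $1-h$ lies in the base ring $L(C)\cong L[x,x^{-1}]$ and acts coordinatewise, the equation collapses to $(1-h)a_1=1$ in $L[x,x^{-1}]$, which fails because the units there are exactly the monomials. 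Both are valid and use torsion-freeness in essential ways ($h^d\neq 1$ for $d\geq 1$ in the paper; $C$ infinite cyclic so $L(C)$ is a Laurent polynomial domain in yours). Your version is arguably cleaner and more conceptual, at the modest cost of invoking the free-module decomposition of $L(H)$ over $L(C)$ and the description of units in $L[x,x^{-1}]$ (Kaplansky's unit conjecture for $\mathbb{Z}$, noted in Section~\ref{2.3}); the paper's version is more elementary and self-contained, working entirely with finite supports in $H$.
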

\begin{proof}
Let $1 \neq h \in H$. If $1-h$ is invertible in $L(H)$ then there exists an element $f = \sum_{i = 1}^k \alpha_i f_i \in L(H)$, where $\alpha_i \in L$ and $f_i $ are distinct elements of $H$, such that $(1-h) f = 1$. Hence
$$
\sum_{i = 1}^k \alpha_i f_i  - \sum_{i = 1}^k \alpha_i hf_i  = 1.
$$
It follows that one  of the elements $f_i$ or $hf_i$, say $hf_{i_0}$,  is equal to 1. Then denoting $\beta_i = \alpha_i$ for $i \neq i_0$; and $\beta_{i_0} = 1+\alpha_{i_0}$, one gets
$$
\sum_{i = 1}^k \alpha_i f_i  = \sum_{i = 1}^k \beta_i hf_i.
$$
Since all elements $f_i$, as well as $hf_i$, are distinct it follows that there is a permutation $\sigma \in Sym(k)$ such that $f_i = hf_{\sigma(i)}$ for every $i$. Let $\sigma = \sigma_1 \ldots \sigma_m$ be a decomposition of $\sigma$ into a product of independent cycles. If one of these cycles has length 1 then $f_i = hf_i$ for some $i$, which implies that $h = 1$ - contradiction with the choice of $h$. Hence all the cycles $\sigma_i$ have length at least 2. Consider the cycle $\sigma_1$, to simplify notation we may assume (upon rearrangement of indices) that $\sigma_1 = (1 2 \ldots d)$, where $2\leq d \leq k$.  Hence
$$
f_1 = hf_2, \ldots, f_d = hf_1,
$$
so $f_1 = h^df_1$, which implies that $h^d = 1$. Since $H$ is torsion-free one has $h = 1$ - contradiction. This shows that $1-h$ is non-invertible in $L(H)$.
 
\end{proof}

 \begin{theorem} \label{th:lo} Let $G, H$ be groups and $K, L$ be fields such that $K(G)\equiv L(H)$. If $G$ is LO then the following hold:
 \begin{itemize}
 \item [1)] $H$ is LO.
 \item [2)] $K \equiv L$  and $G \equiv H$.
 \end{itemize}
 \end{theorem}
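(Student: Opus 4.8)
The plan is to exploit the fact that $K(G)$ and $L(H)$ are elementarily equivalent as rings, together with Lemma~\ref{le:units} and Lemma~\ref{le:1+h}, to transfer the relevant structure from the $G$-side to the $H$-side. First I would observe that, since $G$ is LO, the group $G$ satisfies Kaplansky's unit conjecture (as noted in Section~\ref{2.3}, via \cite{ST} and \cite{P}), so by Lemma~\ref{le:units} the field $K$ is a Diophantine subset of $K(G)$ and the group $G$ is Diophantine interpretable in $K(G)$, uniformly. The key point is that this interpretation is given by a \emph{fixed} formula independent of the ambient ring, so I would argue that the same formula, applied in $L(H)$, defines a subgroup $G^\ast$ of the units of $L(H)$ and a subfield $K^\ast$, with $G^\ast \cong G$ and $K^\ast \equiv K$ — the latter because definability of $K$ in $K(G)$ and $K(G) \equiv L(H)$ force $K^\ast$ to satisfy exactly the same first-order sentences as $K$. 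This already yields one half of part~2): $K \equiv K^\ast$, and I still need to identify $K^\ast$ with $L$.

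For part~1), once we know $G^\ast \cong G$ sits inside $L(H)^\ast$, I would use Lemma~\ref{le:equivLO}: being LO is preserved under elementary equivalence, so from $G^\ast \cong G$ LO and $G^\ast \le L(H)^\ast$ we cannot \emph{immediately} conclude $H$ is LO — the subtlety is that $G^\ast$ need not be all of $H$'s copy inside $L(H)$. The cleaner route is to first establish $G \equiv H$, then invoke Lemma~\ref{le:equivLO} directly. To get $G \equiv H$: the group $H$ is Diophantine interpretable in $L(H)$ provided $H$ satisfies Kaplansky's unit conjecture, which requires knowing $H$ is torsion-free (or LO) in advance — a potential circularity. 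I would break this by noting that torsion-freeness of $H$ can be read off first: Lemma~\ref{le:1+h} says $1-h$ is non-invertible for every nontrivial torsion-free-order element, and conversely if $h^n = 1$ then $1-h$ is a zero divisor (hence certainly not a unit in a ring with zero divisors); the statement "$H$ is torsion-free" translates, through the uniform Diophantine interpretation of the \emph{multiplicative structure} that works for \emph{every} group, into a first-order property of the ring that $K(G)$ satisfies (since $G$ is LO, hence torsion-free) and therefore $L(H)$ satisfies too, forcing $H$ torsion-free. With $H$ torsion-free and the earlier observation that $K(G)\equiv L(H)$ implies the unit group of $L(H)$ "looks like" a group algebra's unit group, I would then argue $H$ satisfies the unit conjecture as well, so $H$ is Diophantine interpretable in $L(H)$ and the symmetric argument gives $G \equiv H$ and $K \equiv L$.

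I expect the main obstacle to be the potential circularity around establishing that $H$ satisfies Kaplansky's unit conjecture: the interpretation of $H$ in $L(H)$ via Lemma~\ref{le:units} presupposes that hypothesis. The way out is to phrase everything in terms of sentences transferred across $K(G) \equiv L(H)$. Concretely, the correct first step is to show that the property "the ring $R$ is isomorphic to a group algebra $L(H)$ with $H$ LO" — or at least enough of its first-order consequences (torsion-freeness of the would-be group of trivial units, triviality of all units, absence of zero divisors, left-orderability of the unit group modulo scalars) — is captured by a theory that $K(G)$ belongs to and hence $L(H)$ does too. Once one has that $H$ (as interpreted) is LO, parts~1) and~2) both follow: part~1) is immediate, and part~2) follows by running the now-symmetric Diophantine interpretations in both directions and applying Lemma~\ref{Hodges} to transfer arbitrary sentences of $L(\mathbb{G})$ and $L(\mathbb{F}\text{-field})$ between $G, H$ and $K, L$ respectively. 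I would also double-check the uniformity claims in Lemma~\ref{le:units} carefully, since the whole argument rests on the interpreting formulas being genuinely independent of the group and the field.
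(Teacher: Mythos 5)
Your proposal identifies the right ingredients (Kaplansky via LO, Lemma~\ref{le:units}, Lemma~\ref{le:1+h}, Lemma~\ref{le:equivLO}, transfer via the uniformity of the formulas), but the route you take through them has a genuine gap, which you yourself flag as a circularity and do not actually resolve.

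The gap: you want to show $H$ satisfies Kaplansky's unit conjecture (so that $H$ is Diophantine interpretable in $L(H)$ by Lemma~\ref{le:units}), and your proposed argument is that since $H$ is torsion-free and the unit group of $L(H)$ ``looks like'' a group algebra's unit group, $H$ must satisfy the unit conjecture. This is not a valid inference. Torsion-freeness alone does not give Kaplansky --- that is exactly the content of the conjecture --- and the phrase ``looks like a group algebra's unit group'' is not a first-order property one can transfer: expressing that all units of $L(H)$ are trivial would require a definition of $L$ and of $H$ inside $L(H)$, which is precisely what Lemma~\ref{le:units} gives you \emph{only after} you know Kaplansky holds for $H$. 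The circularity is real, and your sketch does not break it. There is also a smaller slip: when you apply the interpreting formula of Lemma~\ref{le:units} in $L(H)$, you only get $G^\ast \equiv G$ (elementary equivalence of the interpreted structure), not $G^\ast \cong G$.

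The paper's proof avoids needing Kaplansky for $H$ entirely, and this is the idea you are missing. One takes the formula $\phi(x)$ defining $K^\ast$ in $K(G)$; in $L(H)$ it defines a central subgroup $U\leq L(H)^\ast$ (not a priori equal to $L^\ast$). After establishing $H$ torsion-free (via zero divisors, as you do), Lemma~\ref{le:1+h} is used not for torsion-freeness but to show $H\cap U = 1$: for $1\neq h\in H$, $1-h$ is non-invertible, so $h\notin U$. Hence $H$ embeds into the quotient $L(H)^\ast/U$. That quotient is defined by the same formulas that define $G \cong K(G)^\ast/K^\ast$ in $K(G)$, so $L(H)^\ast/U \equiv G$. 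By Lemma~\ref{le:equivLO} it is LO, and $H$, as a subgroup of an LO group, is LO. Now that $H$ is LO, Kaplansky holds for $H$, Lemma~\ref{le:units} applies symmetrically, and part~2) drops out because $K$, $L$ and $G$, $H$ are interpreted by identical formulas in elementarily equivalent rings. So the fix to your argument is: do not try to certify Kaplansky for $H$ up front; instead show $H$ embeds in the definable LO quotient, and derive everything else afterward.
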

 \begin{proof}  To prove 1) consider the formula 
 $$
\phi(x) = (x = -1) \vee (x \in K(G)^*
) \wedge ( (x+1) \in K(G)^*),
$$
 which defines $K^{*}$ in $K(G)^*$ (as well as in $K(G)$). The same formula defines some subgroup $U$ in the group of units $L(H)^*$.  Since $K^*$ is central in $K(G)^*$ the subgroup $U$ is also central in $L(H)^*$, in particular, it is normal in $L(H)^*$. The formula $\phi(x)$ states that the elements in $U$ are precisely the units $a \in L(H)^*$ for which either $1+a $ is invertible in $L(H)$ or $a = -1$.  Notice that  the ring $K(G)$ has no zero-divisors. Since this property can be described by a first-order sentence the ring $L(H)$ which elementarily equivalent to $K(G)$ also does not have zero-divisors. This implies that the group $H$ is torsion-free, otherwise, if $h^n = 1$ for some $1 \neq h \in H$ then $0 = h^n - 1 = (h-1)(h^{n-1} +h^{n-2} + \ldots +h+1)$ and $L(H)$ would have zero-divisors.  Now by Lemma  \ref{le:1+h} $H \cap U = 1$. Hence $H$ embeds into the quotient group $L(H)^*/U$. Notice that this group $L(H)^*/U$ is interpretable in $L(H)$ precisely by the same formulas as $G$ in $K(G)$ (see the proof of Lemma \ref{le:units}). Hence $G \equiv L(H)^*/U$.  The group $G$ is LO, hence by Lemma \ref{le:equivLO} the group $L(H)^*/U$ is LO, as well as its subgroup $H$. This proves 1).
 
 Now 2) follows from 1) since by Lemma \ref{le:units} the fields $K$ and $L$, as well as the group $G$ and $H$,  are interpretable in the rings $K(G)$ and $L(H)$ by the same first-order formulas, hence they are elementarily equivalent.
  \end{proof}
  \begin{remark}   For any limit group  $G$ and any  field $K$ the first-order theory of the group algebra $K(G)$ is not stable.  
 \end{remark}
Indeed, the group algebra $K(G)$ is an integral domain (but not a field).

 \section{Weak second order logic and elementary equivalence of group algebras}
 
\subsection{Weak second order logic}\label{se:weak-second-order} 
\label{se:weak-second-order-intro}

For a set $A$ let  $Pf(A)$ be the set of all finite subsets of $A$.
Now we define by induction  the set $HF(A)$  of hereditary finite sets over $A$;
\begin{itemize}
\item  $HF_0(A)= A $,  
\item $HF_{n+1}(A) = HF_n(A)\cup Pf(HF_n(A))$, 
\item  $HF(A)=\bigcup _{n\in\omega}HF_n(A).$  
 \end{itemize}

For a structure $\MA = \langle A;L \rangle$ define a new 
 first-order structure  as follows. Firstly, one replaces all operations in $L$ by the corresponding predicates (the graphs of the operations) on $A$, so one may assume from the beginning that $L$ consists only of predicate symbols. Secondly, consider the structure $$HF(\MA)=\langle HF(A); L, P_A, \in\rangle, $$where $L$ is defined on the subset $A$, $P_A$ defines $A$ in $HF(A)$, and $\in$ is the membership predicate on $HF(A)$.  Then everything that can be expressed in the weak second order logic in $\MA$ can be expressed in the first-order logic in $HF(\MA)$, and vice versa. The structure  $HF(\MA)$ appears naturally in the  weak second order logic, the theory of admissible sets, and $\Sigma$-definability, - we refer to  \cite{B1,B2,E,Ershov2} for details.

 There is another structure, termed the {\em list superstructure} $S(\MA,\MN)$ over $\MA$ whose  first-order theory has the same expressive power as the weak second order logic over $\MA$ and which  is more convenient for us to use in this paper. To introduce $S(\MA,\MN)$ we need a few definitions.  Let $S(A)$ be the set of all finite sequences (tuples) of elements from $A$.
 For a  structure $\MA = \langle A;L \rangle$ define in the notation above  a new two-sorted structure $S(\MA)$ as follows:
$$
S(\MA) = \langle \MA, S(A); \frown, \in\rangle,
$$
where $\frown$ is the binary operation of  concatenation of two sequences from $S(A)$ and $a \in s$ for $a \in A, s \in S(A)$ is interpreted as $a$ being a component of the tuple $s$.  As customary in the formal language theory we will denote the concatenation  $s\frown t$ of two sequences $s$ and $t$ by   $st$.
 
 Now, the structure $S(\MA,\MN)$ is defined as  the three-sorted structure 
 $$
 S(\MA,\MN) = \langle \MA, S(A),\MN; t(s,i,a), l(s), \frown, \in \rangle,
 $$
 where $\N = \langle N, +,\cdot, 0,1\rangle$  is the standard arithmetic, $l:S(A) \to N$ is the length function, i.e., $l(s)$ is the length $n$ of a sequence $s= (s_1, \ldots,s_{n})\in S(A)$, and $t(x,y,z)$ is a predicate on $S(A)\times N \times  A$ such that $t(s,i,a)$ holds in $S(\MA,\MN)$ if and only if $s = (s_1, \ldots,s_{n})\in S(A), i \in N, 1\leq i \leq n$, and $a = s_i \in A$. Observe, that in this case the predicate $\in$ is 0-definable in $S(\MA,\N)$ (with the use of $t(s,i,a)$), so sometimes we omit it from the language.
 
 \begin{lemma} \label{th:HF-group1}
Let $K$ be an infinite field. Then  $S(K,\N)$ is  $0$-interpretable  
 in $K[x_1,{x_1}^{-1}, \ldots ,x_n,{x_n}^{-1}]$ uniformly in  $K$ and $n$.\end{lemma}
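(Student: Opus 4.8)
The plan is to reduce everything to a $0$-definable copy of the \emph{polynomial} ring $K[t]$ sitting inside $R:=K[x_1^{\pm1},\dots,x_n^{\pm1}]$, and to build $S(K,\N)$ there. First, by Lemma~\ref{le:units} the field $K$ is $0$-definable in $R$ (free abelian groups are left orderable, hence satisfy Kaplansky's unit conjecture). Using Lemma~\ref{le:limit1} together with the canonical isomorphisms between the definable interpretations $K[g,g^{-1}]$ as $g$ ranges over the non-proper-power monomials (exactly as in Lemma~\ref{natural}(2)), one glues them into a single $0$-interpretation of $K[t,t^{-1}]$ in $R$; applying the argument of Lemma~\ref{natural}(3) inside that interpreted ring then yields a $0$-interpretation of the polynomial ring $K[t]$, uniformly in $K$ and $n$. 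Because these interpretations are obtained by gluing along canonical isomorphisms that send indeterminate to indeterminate, the common class of the parameters is a $0$-\emph{definable} element $t$ of the interpreted $K[t]$. From now on I work in $(K[t],t)$.

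Next I interpret $\N=\langle N;+,\cdot,0,1\rangle$ in $(K[t],t)$. Since $t$ is prime in $K[t]$, the set $P_\N:=\{t^j:j\ge 0\}$ is $0$-definable (``every irreducible divisor is an associate of $t$, and the value at $1$ is $1$'', the last clause being $(t-1)\mid(\,\cdot-1)$); this is the universe of $\N$, with $0\leftrightarrow1$, $1\leftrightarrow t$, addition $\leftrightarrow$ multiplication of powers, and order $\leftrightarrow$ divisibility of powers (again using primeness of $t$). For multiplication on $\N$ I use that, $K$ being infinite, a polynomial is determined by its values on $K^*$: hence for each $i$ the substitution endomorphism $\phi_i\colon t\mapsto t^i$ of $K[t]$ is $0$-definable from $t^i$ (its graph being ``$\phi_i(g)=g'$ iff $g'(\alpha)=g(\alpha^i)$ for all $\alpha\in K^*$'', with $\alpha^i$ extracted from $\alpha$ and $t^i$, and each equality $h(\alpha)=c$ written as $(t-\alpha)\mid(h-c)$). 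Writing $\mathbf 1_m$ for the unique $u\in K[t]$ with $u(t-1)=t^m-1$, one has $\phi_i(\mathbf 1_j)=(t^{ij}-1)/(t^i-1)$, and so $k=i\cdot j$ is expressible by divisibility equations (with the evident treatment of the degenerate cases $i=0$, $j=0$). Thus $\langle N;+,\cdot,0,1,\le\rangle$ is $0$-interpretable in $R$.

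For the list structure $S(K)$ I need, for $g\in K[t]$ and $d\in\N$, a definable predicate ``$\deg_t g\le d$'' and coefficient extraction. Again because a polynomial is determined by its values on $K^*$, for $\alpha\in K^*$ the scalar $g(1/\alpha)$ is definable as the $c$ with $(\alpha t-1)\mid(g-c)$, so ``$\deg_t g\le d$'' can be written as ``there is $h\in K[t]$ with $h(\alpha)=\alpha^d\,g(1/\alpha)$ for all $\alpha\in K^*$'' — such an $h$ is the reversal $t^d g(1/t)$, which is a genuine polynomial exactly when $\deg_t g\le d$. The coefficient of $t^i$ in $g$ is then definable (the $c\in K$ for which there is $r$ with $\deg_t r\le i-1$ and $t^{i+1}\mid(g-ct^i-r)$). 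I code a finite sequence $(a_1,\dots,a_m)\in S(K)$ by a pair $(p,m)$ with $p=\sum_{j=1}^m a_jt^j$, $\deg_t p\le m$, and $m\in\N$ (the declared length); then $l(s)=m$, the predicate $t(s,i,a)$ is ``$1\le i\le m$ and $a$ is the coefficient of $t^i$ in $p$'', $a\in s$ follows, and concatenation is $(p,m)\frown(p',m')=(p+t^mp',\ m+m')$, using $t^m$ from $P_\N$. All the defining formulas refer neither to $K$ nor to $n$, so the interpretation is uniform; composing the interpretations (Lemma~\ref{Hodges}) completes the proof.

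The genuinely delicate point is not the coding bookkeeping but getting the degree/coefficient machinery and the multiplication of naturals to be first-order and uniform in $K$: the key is to exploit the infinitude of $K$ — so that polynomials are pinned down by their values on $K^*$ — to make the substitution maps $t\mapsto t^i$ and the reversal-type operation definable, after which ``degree'' becomes the visible condition above. One must also verify the degenerate cases ($i$ or $j$ zero, $m=0$ sequences, the passage from $K[x,x^{-1}]$ to $R$), which are routine.
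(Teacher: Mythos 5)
Your proof is correct and follows essentially the same chain of interpretations that the paper invokes by reference (\cite{assoc}, Theorem 5 together with Lemmas~\ref{natural} and~\ref{le:limit1}): glue the definable copies $K[g,g^{-1}]$ (for non-proper-power monomials $g$, a definable parameter set since such $g$ are exactly the units outside $K$ with $g-1$ prime) into a $0$-interpreted one-variable polynomial ring with $0$-definable indeterminate $t$, realize $\N$ as the powers of $t$, and code $K$-sequences by polynomial/length pairs. Your explicit treatments of natural-number multiplication via the substitution endomorphisms $\phi_i$ and of degree bounds via the reversal $t^d g(1/t)$ are sound, self-contained fillers for the details the paper delegates to \cite{assoc}.
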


\begin{proof} 
Follows from (\cite{assoc}, Theorem 5), Lemmas \ref{natural} and \ref{le:limit1}.
  \end{proof}

  In the following lemma we summarize some known results (see for example \cite{bauval}) about the structures $HF(\MA), S(\MA)$, and $S(\MA,\N)$.
  
  \begin{lemma}
  Let $\MA$ be a structure. Then the following holds:
    $$S(\MA) \to_{int} S(\MA,\N) \to_{int} HF(\MA) \to_{int} S(\MA,\N) \to_{int} S(\MA)$$ 
    uniformly in $\MA$ (the last interpretation requires that $\MA$ has at least two elements).
   \end{lemma}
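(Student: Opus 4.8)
The plan is to establish the four interpretations separately; each is classical, so I only specify the codings and indicate which definability facts have to be checked, referring to \cite{bauval} (and \cite{B1,Ershov2}) for the routine verifications.

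First, $S(\MA)\to_{int} S(\MA,\N)$ is the trivial forgetful interpretation: the sorts $\MA$ and $S(A)$ of $S(\MA,\N)$, together with $\frown$ and $\in$, already form a copy of $S(\MA)$, so one just drops the sort $\N$ and the symbols $t,l$. Next, for $S(\MA,\N)\to_{int} HF(\MA)$ I would work inside $HF(\MA)$: the pure hereditarily finite sets (those whose transitive closure is disjoint from $A$) form a $0$-definable copy of $HF(\varnothing)$, the finite von Neumann ordinals inside it give a $0$-definable copy of $\N$ carrying definable $+$ and $\cdot$, and a finite sequence $(a_1,\dots ,a_n)$ is coded by the set of Kuratowski pairs $\{\langle 1,a_1\rangle ,\dots ,\langle n,a_n\rangle\}$; one then checks that the set of such codes, the length function $l$, the selector $t$, concatenation, and $\in$ are all $0$-definable in $HF(\MA)$.

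The substantial direction is $HF(\MA)\to_{int} S(\MA,\N)$. Here I would code a set $x\in HF(A)$ by a pair $(c,\sigma)$, where $\sigma\in S(A)$ is the list of urelements occurring in a canonical traversal of $x$, and $c\in\N$ is an Ackermann-type number coding the ``shape'' of $x$ (the $\varnothing$-rooted finite tree obtained from $x$ by replacing its urelement-leaves with marked empty leaves numbered by position in $\sigma$); this uses the Ackermann coding of $HF(\varnothing)$ by $\N$ (namely $m\in n$ iff the $m$-th binary digit of $n$ is $1$) together with a fixed arithmetical pairing. The set $HF(\MA)^{*}$ of legal pairs is then definable in $S(\MA,\N)$, and since one set may have several codes I would quotient by the equivalence relation $\sim$ of ``coding the same hereditarily finite set''. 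The key point is that $\sim$, the interpreted membership, and the predicate $P_A$ are first-order definable, because each instance is witnessed by a \emph{finite} certificate (a list of pairs of codes closed under the obvious extensionality rules), and finite certificates are themselves elements of $S(\MA,\N)$, so the arithmetic sort allows quantification over them. Finally, $S(\MA,\N)\to_{int} S(\MA)$ with $|A|\ge 2$: fixing distinct $a_0,a_1\in A$, I would code natural numbers by the unary strings $a_0^{n}=(a_0,\dots ,a_0)$ and define on them successor, ordering by length, and — using $a_1$ as a separator to encode lists of strings and thereby iterate concatenation — addition and multiplication; then $l(s)$ is the $n$ for which $a_0^{n}$ matches $s$ componentwise, and $t(s,i,a)$ asserts that $s$ factors as $u\,(a)\,v$ with $l(u)=i-1$. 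One also checks that the interpretation is independent of the choice of $a_0,a_1$, which is immediate from the obvious length-preserving identifications.

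I expect the two genuinely non-routine points to be the following. In $HF(\MA)\to_{int} S(\MA,\N)$, one must verify that extensional equality of coded sets (equivalently, the relation $\sim$) is first-order; this is precisely where the arithmetic sort of $S(\MA,\N)$ is indispensable, since it is what lets us quantify over the finite certificates witnessing set equality. In $S(\MA,\N)\to_{int} S(\MA)$, the delicate step is recovering genuine \emph{multiplication} (not merely addition) on the coded copy of $\N$ inside the bare list structure $S(\MA)$, and this is exactly what forces the hypothesis $|A|\ge 2$: a single letter yields only the additive structure, whereas a second letter permits the coding of lists of strings and hence iterated concatenation. All remaining verifications are routine bookkeeping of the kind carried out in \cite{bauval}.
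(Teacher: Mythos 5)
The paper itself offers no proof of this lemma: it is stated as a summary of ``known results'' with a bare citation to \cite{bauval}, so there is no authors' argument for your sketch to be compared against. What can be assessed is whether your sketch is internally sound.

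Your treatment of the first three interpretations is correct and standard. $S(\MA)\to_{int}S(\MA,\N)$ is the forgetful reduct. For $S(\MA,\N)\to_{int}HF(\MA)$, the von Neumann ordinals among the (definable) pure sets furnish $\N$, and coding a tuple as a set of Kuratowski pairs $\{\langle 1,a_1\rangle ,\dots ,\langle n,a_n\rangle\}$ furnishes the sequence sort with definable $l$, $t$, $\frown$, $\in$. For $HF(\MA)\to_{int}S(\MA,\N)$, the Ackermann-number-plus-urelement-list coding works, and your observation that membership and extensional equality become first order because the witnessing \emph{finite} certificates can themselves be coded in $S(\MA,\N)$ (lists of naturals go in the $\N$ sort, lists of $A$-strings are concatenated with break-points recorded arithmetically) is exactly the right point.

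The gap is in $S(\MA,\N)\to_{int}S(\MA)$, and you have mislocated the difficulty there. You flag the recovery of multiplication as the delicate step; in fact that is classical Quine-style concatenation arithmetic over the definable sub-monoid $W=\{a_0,a_1\}^{*}$ and is unproblematic. The step you treat as routine --- ``$l(s)$ is the $n$ for which $a_0^{n}$ matches $s$ componentwise'' --- is the one that does not close. You are asserting that $l(s)=l(t)$ on raw elements of the sequence sort $S(A)$ is first-order in $\langle\MA,S(A);\frown,\in\rangle$, but $\frown$ and $\in$ give no first-order access to positions of an arbitrary $A$-string, and every natural characterization of ``same length'' (position-by-position interleaving, a chain of one-letter prefix extensions from $\varnothing$ to $s$, the shadow collapsing every letter to $a_0$) is an inductive fixed point rather than a single formula; you give no formula and I do not see one by the route you indicate.

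The repair is not to define $l$ on raw strings at all but to change the carrier of the second sort. Code $(s_1,\dots ,s_n)$ by a self-delimiting string such as $a_0\,\mathrm{esc}(s_1)\,a_0^{2}\,\mathrm{esc}(s_2)\cdots a_0^{n}\,\mathrm{esc}(s_n)\,a_0^{n+1}$, where $\mathrm{esc}$ sends each letter of $A$ to a short $a_0$-free block (e.g.\ $\mathrm{esc}(a_0)=a_1a_1$, $\mathrm{esc}(a_1)=a_1a_1a_1$, $\mathrm{esc}(b)=a_1ba_1$ otherwise). The maximal $a_0$-runs then form a ``ruler'' of lengths $1,2,\dots ,n+1$, which is first-order recognizable using only $\frown$ and $\in$; on the definable set of such codes both $l$ and the selector $t$ are definable, and $\frown$ of codes is recoverable with the help of the already-interpreted arithmetic on $W$. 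With that change of carrier your interpretation goes through; as written, with $S(A)$ itself as the second sort, it does not.
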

   \begin{cor} \label{c2} $S(K(G),\N)\to_{int} S(S(K(G),\N),\N)  \to_{int}S(K(G),\N).$
   \end{cor}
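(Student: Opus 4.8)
The plan is to read off both interpretations from the preceding lemma, applying it once with $\MA=K(G)$ and once with $\MA=S(K(G),\N)$, and then to glue the pieces together using transitivity of $\to_{int}$ and the fact that the hereditarily finite sets operation is ``idempotent''. Write $\MB=S(K(G),\N)$ for short; note $K(G)$ has at least two elements (it contains $0\neq1$), so any side conditions in the preceding lemma are harmless here. The left-hand arrow is immediate: for any structure $\MA$ the base sort $\MA$, together with all its operations and predicates, is $0$-definable inside $S(\MA,\N)$, so $\MA\to_{int}S(\MA,\N)$; taking $\MA=\MB$ gives $S(K(G),\N)\to_{int}S(S(K(G),\N),\N)$.

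For the right-hand arrow $S(\MB,\N)\to_{int}\MB$ I would chain interpretations. The preceding lemma with $\MA=\MB$ gives $S(\MB,\N)\to_{int}HF(\MB)$, so it suffices to produce $HF(\MB)\to_{int}\MB$. The preceding lemma with $\MA=K(G)$ gives both $\MB\to_{int}HF(K(G))$ and $HF(K(G))\to_{int}\MB$, i.e.\ $\MB$ and $HF(K(G))$ are mutually $0$-interpretable. Since the $HF(\cdot)$ construction lifts an interpretation of the base structure to an interpretation of the whole hereditarily finite superstructure, this yields $HF(\MB)\to_{int}HF(HF(K(G)))$. But a finite set of hereditarily finite sets over a set $A$ is itself a hereditarily finite set over $A$, so $HF(HF(K(G)))$ and $HF(K(G))$ have the same underlying domain and the same membership predicate, hence $HF(HF(K(G)))\to_{int}HF(K(G))$; and $HF(K(G))\to_{int}\MB$ once more by the preceding lemma. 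Composing $S(\MB,\N)\to_{int}HF(\MB)\to_{int}HF(HF(K(G)))\to_{int}HF(K(G))\to_{int}\MB$ gives the claim, with all the interpreting formulas independent of $K$ and $G$ by the uniformity in the preceding lemma.

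The step I expect to be the main obstacle is the lifting assertion $\MX\to_{int}\MY\Rightarrow HF(\MX)\to_{int}HF(\MY)$ in the case where the interpretation of $\MX$ uses a nontrivial equivalence relation $\sim$ on a definable domain $D\subseteq \MY^k$: the $\sim$-classes need not be finite, so they cannot serve directly as atoms of a hereditarily finite superstructure. The remedy is to represent elements of $HF(\MX)$ by hereditarily finite sets built over a fixed $0$-definable set of $\sim$-representatives in $D$, and to lift $\sim$ to an extensional congruence on these sets by the $\in$-recursion ``two sets are equivalent iff their members have the same equivalence classes''. What remains is to check that this recursion is first-order definable in an $HF$-structure, which is routine and standard in the theory of admissible sets and $\Sigma$-definability (the references cited just before the preceding lemma), so I would cite it rather than carry it out.
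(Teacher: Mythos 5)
The paper states this Corollary without proof, treating it as an immediate consequence of the preceding (also unproved) Lemma on mutual interpretability of $S(\MA)$, $S(\MA,\N)$, and $HF(\MA)$. Your proposal is correct and fills in that gap along the natural lines: the left arrow is indeed trivial (a structure is a sort, hence $0$-definable, in its own list superstructure), and the right arrow is obtained by composing the Lemma's arrows with the idempotence of $HF$.

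Two small remarks on the right-hand chain. First, the assertion that ``$HF(HF(K(G)))$ and $HF(K(G))$ have the same underlying domain and the same membership predicate'' is correct for the domain and for $\in$ under the paper's cumulative definition of $HF$, but the structures are not literally identical: $HF(HF(K(G)))$ carries the extra predicate $P_{HF(K(G))}$ (which is vacuously the whole domain) and treats $P_{K(G)}$ as part of the inherited language on the sub-sort; that said, the structures are trivially interdefinable, so the conclusion $HF(HF(K(G)))\to_{int}HF(K(G))$ stands. Second, your route relies on the lifting $\MX\to_{int}\MY\Rightarrow HF(\MX)\to_{int}HF(\MY)$, which is a genuine (if standard) admissible-sets fact not stated in the paper, and you correctly flag the issue of lifting a non-trivial definable equivalence relation to an extensional congruence by $\in$-recursion. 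An alternative, more elementary route avoids this lifting entirely: a sequence of elements of $S(K(G),\N)$ (each a $K(G)$-element, a $K(G)$-tuple, or a natural number) can be encoded directly in $S(K(G),\N)$ by a pair consisting of a G\"odel-coded natural number recording the type tags and block lengths, together with the concatenation of the $K(G)$-tuple blocks; since $K(G)$ has the two distinguishable constants $0\neq 1$ this coding is definable, giving $S(S(K(G),\N),\N)\to_{int}S(K(G),\N)$ without invoking $HF$-lifting. Either route is acceptable; yours is conceptually clean and correctly sourced, the other is closer in spirit to a one-line corollary.
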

  
  The following result is known, it is based on two facts: the first one is that there are effective enumerations (codings) of the set of all tuples of natural numbers such that the natural operations over the tuples are computable on their codes;  and the second one is that  all computably enumerable predicates over natural numbers  are 0-definable in $\N$ (see, for example, \cite{Coper, Rogers}).  
  
\begin{lemma} \label{le:list-superstructure}
The list superstructures  $S(\N,\N)$ and $S(\Z,\N)$ are absolutely interpretable in $\N$.  They are also bi-interpretable with $\N$ (see Definition \ref{bi-in} below).
\end{lemma}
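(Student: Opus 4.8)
The plan is to reduce everything to a single classical device, which is exactly the ``first fact'' recalled just before the lemma: an effective G\"odel coding of finite tuples of natural numbers by single natural numbers. Concretely I would fix the coding $\langle a_1,\dots,a_n\rangle \mapsto \prod_{i=1}^{n} p_i^{\,a_i+1}$, with the empty tuple coded by $1$ and $p_i$ the $i$-th prime; this is a bijection between $S(N)$ and the set $\mathrm{Seq}\subseteq N$ of positive integers whose set of prime divisors is an initial segment of the primes. The set $\mathrm{Seq}$, the length function, the ``$i$-th entry'' function, and concatenation of codes are all primitive recursive, hence have computably enumerable graphs, hence by the ``second fact'' recalled before the lemma are $0$-definable in $\N$. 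I would then interpret the three sorts of $S(\N,\N)=\langle \N, S(N), \N; t, l, \frown, \in\rangle$ in $\N$ by taking the base sort ($\MA=\N$) and the index sort to be $\N$ with the identity, and the sequence sort to be $\mathrm{Seq}$ with equality as the equivalence relation; translating $l$, $\frown$, $t(s,i,a)$ and the derived $\in$ into their definable graphs on $\mathrm{Seq}$ then yields a $0$-interpretation, so $S(\N,\N)\to_{int}\N$ with no parameters.

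For $S(\Z,\N)$ I would insert one more layer of coding: encode $\Z$ in $\N$ by $\mathrm{enc}(m)=2m$ for $m\ge 0$ and $\mathrm{enc}(m)=-2m-1$ for $m<0$, under which the operations of $\Z$ become computable functions of codes and hence $0$-definable in $\N$, so $\Z\to_{int}\N$; then code a tuple of integers as the G\"odel code of the tuple of the $\mathrm{enc}$-values of its entries. The computations of $l$, $\frown$, $t$ on these codes are still primitive recursive, so the same argument gives $S(\Z,\N)\to_{int}\N$.

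For the bi-interpretability claim (Definition \ref{bi-in}) one half of the interpretation pair is free: $\N$, with $+,\cdot,0,1$, occurs literally as a sort of $S(\MA,\N)$, so $\N\to_{int}S(\N,\N)$ via the identity. I would then check that the two round-trip composites are definably isomorphic to the identity. The composite $\N\to S(\N,\N)\to\N$ just returns $\N$ with its own operations, so the identity is the required isomorphism and it is trivially definable. The composite $S(\N,\N)\to\N\to S(\N,\N)$ produces a copy of $S(\N,\N)$ whose base and index sorts are the $\N$-sort of $S(\N,\N)$ (matched by the identity) and whose sequence sort is the definable set $\mathrm{Seq}$ living inside that $\N$-sort; the isomorphism back to the original sequence sort sends a code $c\in\mathrm{Seq}$ to the unique $s\in S(N)$ whose length is the length read off arithmetically from $c$ and whose $i$-th entry, for each $i\le l(s)$, is the $i$-th entry read off arithmetically from $c$. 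Since reading the length and the entries off $c$ are arithmetic operations available on the $\N$-sort, this map is cut out by an $S(\N,\N)$-formula; it is a bijection compatible with $l$, $\frown$, $t$, $\in$, hence a definable isomorphism. The same reasoning, using that $\Z\to_{int}\N$ is a bi-interpretation and that $\N$ is a sort of $S(\Z,\N)$, gives the bi-interpretability of $S(\Z,\N)$ with $\N$.

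There is no genuine obstacle here, since the lemma is classical; the only ``hard part'' is the bookkeeping: choosing a canonical coding so that the sequence sort needs no proper quotient, verifying that $\mathrm{Seq}$, $l$, $\frown$ and $t$ are primitive recursive, and exhibiting the decoding isomorphism of the previous paragraph by an explicit $S(\N,\N)$-formula. All of this is precisely packaged by the two facts quoted before the lemma.
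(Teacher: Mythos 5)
Your proof is correct and takes essentially the same route the paper sketches: the paper simply recalls the two classical facts (effective Gödel coding of tuples with computable operations on codes, and $0$-definability of c.e. relations in $\N$) and cites the result as known, and your argument is precisely the careful unpacking of those facts, including the routine round-trip check for bi-interpretability. Nothing is missing.
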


 \begin{theorem} \label{th:HF-group}  Let $K$ be an infinite field. 

1) The structure $S(K,\N)$ and the ring of polynomials in finitely many variables $K[X]$  are  mutually interpretable in one another uniformly in $K$ and $X$.

2) The structure $S(K,\N)$ and the ring of Laurent polynomials in finitely many variables $K[X, X^{-1}]$  are mutually interpretable in one another uniformly in $K$ and $X$.
\end{theorem}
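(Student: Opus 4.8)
The plan is to establish each of the two mutual interpretations by treating its two directions separately: in each case one direction is already available from earlier results in the paper, and the other is a routine coding argument, after which the theorem follows by composing interpretations.

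\emph{From the ring to $S(K,\N)$.} For part~2) I would simply invoke Lemma~\ref{th:HF-group1}: $S(K,\N)$ is $0$-interpretable in $K[x_1,x_1^{-1},\ldots,x_n,x_n^{-1}]$ uniformly in $K$ and in $n\geq 1$. For part~1) the analogous statement, that $S(K,\N)$ is interpretable in $K[X]$ uniformly in $K$ and $X$, is \cite{assoc}, Theorem~5 — precisely the input on which the proof of Lemma~\ref{th:HF-group1} is built, before Lemmas~\ref{natural} and \ref{le:limit1} are used to pass from polynomials to Laurent polynomials. (Throughout I take $X\neq\emptyset$: for $X=\emptyset$ both rings collapse to $K$ and the equivalence fails in general, e.g.\ for $K=\mathbb{C}$, whose first-order theory is decidable whereas that of $S(\mathbb{C},\N)$ is not.)

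\emph{Interpreting the ring in $S(K,\N)$.} By the preceding lemmas on list superstructures and hereditarily finite sets, $S(K,\N)$ and $HF(K)$ are mutually interpretable uniformly in $K$, so it suffices to interpret $K[X]$, respectively $K[X,X^{-1}]$, in $HF(K)$. Here I would use the standard set-theoretic coding. The structure $HF(K)$ contains a $0$-definable copy of $\N$ with its arithmetic (the finite von Neumann ordinals), hence it interprets $\Z$, the sets of multidegrees $\N^{|X|}$ and $\Z^{|X|}$ (each coded as a hereditarily finite function from a finite ordinal), and arbitrary finite partial maps from these into $K$. One codes a Laurent polynomial $p=\sum_{\bar e}a_{\bar e}\bar x^{\bar e}$ by the finite set $\{(\bar e,a_{\bar e}):a_{\bar e}\neq 0\}\in HF(K)$; the set of such codes is definable, equality of polynomials is equality of codes, addition of polynomials is coordinatewise addition of coefficients followed by deletion of the zero ones, and multiplication is the finite convolution sum $\bar g\mapsto\sum_{\bar e+\bar f=\bar g}a_{\bar e}b_{\bar f}$ — all first-order expressible over $HF(K)$. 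Letting the number of variables enter only as a free $\N$-sort variable that bounds the length of multidegree tuples yields uniformity in $X$, and uniformity in $K$ is immediate. The polynomial case is identical with $\N^{|X|}$ in place of $\Z^{|X|}$. Composing this with $HF(K)\to_{int}S(K,\N)$ gives $K[X]\to_{int}S(K,\N)$ and $K[X,X^{-1}]\to_{int}S(K,\N)$.

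\emph{Where the real work is.} All the substantive content lies in the direction from the ring to $S(K,\N)$, and it is already done: the polynomial case is \cite{assoc}, Theorem~5, and the Laurent case is Lemma~\ref{th:HF-group1}, whose proof reduces to the polynomial case through Lemmas~\ref{natural} and \ref{le:limit1} (definability of the coefficient field $K$, of monomials as exactly the units, and of the one-variable Laurent subrings $K[g,g^{-1}]$ for non-power monomials $g$). The coding direction, and the bookkeeping needed for uniformity in the number of variables, are routine. So the proof is ultimately an assembly of these ingredients with the standard interpretability equivalence between $S(\cdot,\N)$ and $HF(\cdot)$.
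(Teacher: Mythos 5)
Your proof is correct and follows essentially the same route as the paper's. For the ring-to-$S(K,\N)$ direction you invoke exactly the same two inputs the paper does, namely \cite{assoc}~Theorem~5 for $K[X]$ and Lemma~\ref{th:HF-group1} for $K[X,X^{-1}]$; for the converse direction, the only (cosmetic) difference is that you code a polynomial in $HF(K)$ as a finite set of $(\text{multidegree},\text{coefficient})$ pairs, whereas the paper codes it in $S(S(K,\N),\N)$ as a flat tuple $(\alpha_1,m_{11},\ldots,m_{r1},\alpha_2,\ldots)$ and then uses Corollary~\ref{c2} — since $HF(K)$ and $S(S(K,\N),\N)$ are both bi-interpretable with $S(K,\N)$, these are interchangeable intermediaries, and the rest of the bookkeeping (definability of convolution, uniformity in the number of variables) is identical in substance. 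Your remark that $X=\emptyset$ must be excluded is a correct minor clarification not spelled out in the paper.
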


\begin{proof} 1) 
It was shown in \cite{assoc}, Theorem 5 that $S(K,\N)$ is interpretable in  $K[X]$. We will recall this proof here.  For a  non-invertible polynomial $P \in K[X]$ the polynomial ring $K[P]$ is definable in $K[X]$ with parameter $P$ uniformly in $K, X$ and $P$. The structure $S(K,\N)$ is interpretable in a ring of polynomials in one variable, say $K[t]$,  with the variable $t$ in the language, uniformly in $K$. To this end consider the language of ring theory $L_t$ with the element $t$  as a new constant.   By  \cite{assoc}, Lemma 9 the arithmetic $\N_t$ is interpretable in $K[t]$ in the language $L_t$ uniformly in $K$. So the set $N_t = \{t^n \mid n \in \N\}$, as well as the addition and the multiplication in $\N_t$, is definable in $K[t]$ by a formula with the parameter $t$.  This gives a required interpretation in $K[t]$ of the third sort $\N$ of the structure
 $$
 S(K,\MN) = \langle K, S(F),\MN; t(s,i,a), l(s),\frown\in \rangle.
 $$
Now we interpret $S(K)$ in $K[t]$. We associate a sequence $\bar \alpha = (\alpha_0, \ldots, \alpha_{n})$ of elements from $K$ with a pair $s_{\bar \alpha} = (\Sigma_{i=0}^n \alpha_it^i,t^n)$. It is shown in \cite{assoc}, Theorem 5  that the set of such pairs is definable in $K[t]$ by a formula in $L_t$.  This gives a 0-interpretation  in $K[t]$ (viewed in the language $L_t$) of the set  $S(K)$ of  all tuples of $K$. Note that the field $K$ is also 0-interpretable in $K[t]$, so the two sorts of the structure $S(K) = \langle K, S(K), \frown,\in\rangle$ are 0-interpretable in $K[t]$ in the language $L_t$. 
It was shown that the operations are also interpretable.

Therefore for a given non-invertible polynomial $P \in K[X]$ one can interpret $S(K,\N)$ in $K[X]$ using the parameter $P$ uniformly in $K$, $X$, and $P$. We denote this interpretation by 
  $$
  S(K,\N)_P = \langle K, S(F)_P, \N_P, t_P(s,i,a), l_P(s), \in _P \rangle .
  $$

For different non-invertible parameters $P_1, P_2 \in K[X]$ there is a uniformly definable isomorphism 
 $$
 \nu_{P_1,P_2}:  S(K,\N)_{P_1} \to S(K,\N)_{P_2}.
 $$
    Observe that the interpretation of the first sort $K$ in $S(K,\N)_P$ does not depend on $P$. The definable isomorphism $\mu_{P_1,P_2}:\N_{P_1} \to \N_{P_2}$ between the third  sorts in $S(K,\N)_{P_1} $ and $S(K,\N)_{P_2} $ was constructed in \cite{assoc}, Lemma 10.     
   The isomorphism $\sigma_{P_1,P_2}: S(K)_{P_1} \to S(K)_{P_2}$ between the second  sorts $S(K)_{P_1}$ and $S(K)_{P_2}$ in $S(K,\N)_{P_1} $ and $S(K,\N)_{P_2} $ which arises  from the identical map $S(K) \to S(K)$ is  definable in $K[X]$ uniformly in $K,X$, $P_1$, and $P_2$. Indeed,  if $s_{\bar \alpha} = (f,P_1^n) \in S(F)_{P_1}$ and $s_{\bar \beta} = (g,P_2^m) \in S(F)_{P_2}$ then for such $\sigma_{P_1,P_2}$ one has $\sigma_{P_1,P_2}(f,P_1^n) = (g,P_2^m) $ if and only if $n = m$ and  the tuples $\bar \alpha$ and $\bar \beta$ are equal. The latter  means that for each  $a,b \in K$ such that $t_{P_1}(s_{\bar \alpha},i,a)$ and $t_{P_2}(s_{\bar \beta},i,b)$ hold in $K[X]$ one has $a = b$.  All these conditions can be written by formulas of the ring theory uniformly in $K, X, P_1, P_2$.  
   
   Using the constructed definable isomorphisms one can glue all
the structures  $S(K,\N)_P$ for different $P$ into one structure isomorphic to $S(K,\N)$.  Therefore $S(K,\N)$  
 is 0-interpretable in K[X] uniformly in K and X.

 On the other hand, if $|X|=r$, a finitely generated free  abelian monoid $M=\langle X\rangle $ is interpretable in $S(\N,\N)\to_{int} \N$. Indeed, a monomial $x_1^{m_1}\ldots x_r^{m_r}$ is represented as a tuple $(m_1,\ldots ,m_r)$.  Similarly,  a polynomial
 $f=\sum _{i=1}^k \alpha_ix_1^{m_{1i}}\ldots x_r^{m_{ri}}$, where for $i<j$, $(m_{1i},\ldots ,m_{ri})< (m_{1j},\ldots ,m_{rj})$ in  left-lexicographic order, can be represented as a tuple
 $$(\alpha _1,m_{11},\ldots ,m_{r1},\alpha _2, m_{12},\ldots ,m_{r2},\ldots )$$  
 in $S(S({K}, \N),\N)$.  It is easy to see that addition and multiplication of polynomials is also definable in $S(S(K,\N),\N)$ (the graph of multiplication is computable and, therefore, definable in $\N$ \cite{Mat}), and, therefore  $K[X]\to_{int} S(S(K,\N),\N)\to_{int} S({K}, \N)$ by Corollary \ref{c2}. So  $K[X]$ is interpretable in $S({K}, \N)$ as $K[X]^{\diamond}$  and, by transitivity, in itself as  $K[X]^{\diamond\diamond}.$ 

%Indeed, the formula should say that $f$ is a linear combination of monomials, where  the first $r+1$ elements  $\alpha _1,m_{11},\ldots ,m_{r1}$ of the tuple $t$ correspond to  the monomial $f_1=\alpha _1x_1^{m_{11}}\ldots x_r^{m_{r1}}$, each $f_i$ is obtained from $f_{i-1}$ by adding a monomial corresponding to positions $(i-1)(r+1)+1,\ldots ,i(r+1)$ in the tuple $t$, and the last $f_i$ is $f$ itself. This proves the first statement of the theorem.

2) By Lemma \ref{th:HF-group1}, $S(K,\N)$ is interpretable in the algebra of Laurent polynomials.  We can interpret  $K[X, X^{-1}]$  in $S(K,\N)$ in a similar way as we did for polynomials.
\end{proof}

\begin{lemma} \label{th:HF-group2}  In the situation of Theorem \ref{th:HF-group} the interpretations are such that: 

1) If $M=\langle X\rangle$ a free abelian monoid generated by $X$, $M^{\diamond}$ its image in $S(K,\N)$ and $M^{\diamond\diamond}$ the image of $M^{\diamond}$ when $S(K,\N)$ is interpreted in $K[X]$, then the isomorphism $M\rightarrow M^{\diamond\diamond}$ is definable in $K[X].$

2) If $M$ is a free abelian group generated by $X$ then the isomorphism $M\rightarrow M^{\diamond\diamond}$ is definable in $K[X,X^{-1}].$
\end{lemma}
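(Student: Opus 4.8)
I would prove this by unwinding both interpretations on a monomial, recognising the resulting map, and then checking its definability. Fix $r=|X|$. Under the interpretation of $K[X]$ in $S(K,\N)$ that yields $K[X]^{\diamond}$, a monomial $x_1^{m_1}\cdots x_r^{m_r}\in M$ is --- by the construction in the proof of Theorem~\ref{th:HF-group} --- coded by its exponent tuple $(m_1,\dots ,m_r)$, equivalently by the single natural number $\langle m_1,\dots ,m_r\rangle$ under the fixed arithmetical coding of $\N^r$ in $\N$; thus $M^{\diamond}$ is a definable subset of the third sort $\N$ of $S(K,\N)$. When $S(K,\N)$ is re-interpreted in $K[X]$, that third sort becomes the interpreted copy $\N^{*}$ of arithmetic inside $K[X]$ --- obtained by gluing the copies $\N_P=\{P^{n}\mid n\in\N\}$ of powers of admissible parameters $P$ along the uniformly definable isomorphisms $\mu_{P_1,P_2}$ of Theorem~\ref{th:HF-group} and Lemma~\ref{natural}(2) --- so $M^{\diamond\diamond}$ is the definable subset of $\N^{*}$ whose elements code $r$-tuples of natural numbers. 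Hence the isomorphism in the statement is
\[
\iota\colon\ x_1^{m_1}\cdots x_r^{m_r}\ \longmapsto\ \langle m_1,\dots ,m_r\rangle\in\N^{*},
\]
and what remains is to show that $\iota$ is definable in $K[X]$ with parameters $X$.

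The plan is to obtain $\iota$ as a composition of definable maps. First, $M$ (the coefficient-$1$ monomials) is definable with parameters $X$: it is the submonoid of $(K[X],\cdot)$ generated by $X$, and for each $i$ the set of (scalar multiples of) powers of $x_i$ is definable with parameter $x_i$ --- an element is, up to a unit, a power of $x_i$ iff every irreducible dividing it is associate to $x_i$, and the coefficient is pinned down using the $0$-definability of $K$. Second, for $\mu\in M$ the tuple of $x_i$-degrees of $\mu$ is definable from $\mu$ \emph{as a tuple of elements of $\N^{*}$}: the largest power of $x_i$ dividing $\mu$ is definable (divisibility is definable), and transporting the corresponding exponent into $\N^{*}$ is an instance of the uniformly definable transition isomorphisms between interpreted copies of $\N$ supplied by Theorem~\ref{th:HF-group} and Lemma~\ref{natural}(2) (passing through an admissible auxiliary parameter built from $x_i$ if necessary). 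Third, the coding $(m_1,\dots ,m_r)\mapsto\langle m_1,\dots ,m_r\rangle$ is computable, hence definable in $\N$, hence definable on $\N^{*}$. Composing, the graph of $\iota$ --- ``$\mu\in M$, and $z\in\N^{*}$ is the code of the tuple of $x_i$-degrees of $\mu$'' --- is definable in $K[X]$, proving part~1.

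Part~2 runs along the same lines, with $\N$ replaced by $\Z$, $K[X]$ by $K[X,X^{-1}]$, the free abelian monoid by the free abelian group, and Lemma~\ref{le:limit1} doing the work of the degree function. A variable $x_i$ is a non-trivial monomial that is not a proper power, so by Lemma~\ref{le:limit1} the set $\{x_i^{n}\mid n\in\Z\}$ is definable with parameter $x_i$; combined with Lemma~\ref{natural} it is a parameter-definable copy of $(\Z,+,\times,0,1)$, the $x_i$-exponent of a monomial is definable, and the canonical isomorphism of this copy with the interpreted copy $\Z^{*}$ of $\Z$ used to re-interpret $K[X,X^{-1}]$ in itself is definable by the same uniform transition-isomorphism machinery (once the base points and multiplications are fixed these copies of $\Z$ are rigid). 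The composite $M\to M^{\diamond\diamond}$ is then definable in $K[X,X^{-1}]$ exactly as in part~1.

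The step I expect to be the main obstacle is the second ingredient: definably reading off the $x_i$-degree (resp.\ $x_i$-exponent) of the original monomials \emph{into} the copy of arithmetic that is used to re-interpret $K[X]$ (resp.\ $K[X,X^{-1}]$) in itself, and doing so coherently in $i$. This is a small instance of self-coordinatisation, and it is \emph{not} automatic for two arbitrary definable copies of $\N$ inside a structure; the argument has to lean on the uniform definability of the transition isomorphisms $\mu_{P_1,P_2}$, $\nu_{P_1,P_2}$ established in the proof of Theorem~\ref{th:HF-group} (and in Lemma~\ref{natural}(2), Lemma~\ref{le:limit1}), together with a verification that the parameters used to see the variable-degrees are admissible inputs to that machinery. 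Once that identification is secured, everything else --- definability of $M$, of powers of a variable, of the relevant valuations, and of the arithmetical coding --- is routine.
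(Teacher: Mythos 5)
Your strategy — recover each exponent $m_i$ of a monomial coordinate-by-coordinate, transport it into the interpreted copy of arithmetic via the uniform transition isomorphisms, and then apply the recursive coding — is a sensible and essentially self-contained way to produce the formula $\phi(t,f,P,X)$ whose existence the paper merely asserts in its one-sentence proof. It is also arguably more direct than what the paper's parallel statement in the non-commutative setting (Lemma~\ref{le:M-X-c}) does, which goes through a recursively built ``marker'' element $w_t$ rather than reading off degrees; coordinate-wise degree extraction is available here precisely because the monoid is abelian, so nothing like the $A_{m,i}$-machinery is needed. Your part~1 argument is sound: divisibility in the UFD $K[X]$ isolates the $x_i$-degree, $K^*$ and the coefficient-one condition are definable, and the map $x_i^n\mapsto P^n$ is pinned down by the $\forall\alpha,\beta\in K^*\bigl((x_i-\alpha)\mid(x_i^n-\beta)\;\leftrightarrow\;(P-\alpha)\mid(P^n-\beta)\bigr)$-type formula even though $x_i$ is not itself admissible as the $P$ of Lemma~\ref{natural}(2); you flag this correctly.

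There is, however, a genuine gap in part~2. You transport the part~1 argument verbatim, but the central tool of part~1 --- ``the largest power of $x_i$ dividing $\mu$'' --- collapses in $K[X,X^{-1}]$: there every monomial is a unit, so $x_i^n\mid\mu$ holds for all $n$, and the divisibility relation carries no information about the $x_i$-exponent. Neither Lemma~\ref{le:limit1} (which only gives definability of the subring $K[g,g^{-1}]$) nor the transition-isomorphism machinery supplies the missing ``degree function'' you invoke; the claim ``the $x_i$-exponent of a monomial is definable'' is therefore unsupported as written. The missing step can be repaired, but needs a different idea. For example, for a coefficient-$1$ monomial $\mu$, one can characterize $v=x_i^{m_i(\mu)}$ by requiring that $v\in\{x_i^{n}\mid n\in\Z\}$ (definable by Lemma~\ref{le:limit1}) and that $\mu v^{-1}-1$ lies in the ideal generated by $\{x_j-1: j\neq i\}$; the latter is a Diophantine condition, and uniqueness follows by reducing modulo that ideal, which collapses $K[X,X^{-1}]$ onto $K[x_i,x_i^{-1}]$. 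The same ideal $\bigl(x_1-1,\dots,x_r-1\bigr)$ also lets you define the coefficient-$1$ condition on monomials without reference to irreducible factorizations (which are unavailable for units). Once you patch the exponent extraction in this way, the rest of your part~2 argument goes through as in part~1.
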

\begin{proof}
1) The isomorphism $M\rightarrow M^{\diamond\diamond}$ is definable in $K[X]$ because one can  write a formula $\phi (t,f, P,X)$, where $P$ and $X$ are constants ($X$ defines the language), that defines all the pairs $(t,f)$, where $t$ is a tuple representing a monomial $f$. 
2) Similar to 1).
\end{proof}
 
\subsection{Elementarily equivalent group algebras}
The following theorem was proved in \cite{assoc}.
\begin{theorem} (\cite{assoc}, Theorem 4)
Let $G$ be a torsion free non-abelian hyperbolic group. Then the field $K$ and its action on $K(G)$ are interpretable in $K(G)$ uniformly in $K$ and $G$.
\end{theorem}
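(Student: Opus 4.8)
The plan is to identify the scalars $K\cdot 1$ inside the ring $K(G)$ as its \emph{center}. Recall that the center of any ring is $0$-definable by the formula $\zeta(x):\ \forall y\,(xy=yx)$, so it suffices to show that $Z(K(G))=K\cdot 1$ and then to recover the $K$-module structure on $K(G)$ as multiplication by central elements.

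The first step is the group-theoretic observation that a torsion-free non-abelian hyperbolic group $G$ has no nontrivial finite conjugacy classes. Fix $1\neq g\in G$. Since $G$ is torsion-free, $g$ has infinite order, and in a hyperbolic group the centralizer of an element of infinite order is virtually cyclic; being torsion-free, $C_G(g)$ is then infinite cyclic. Moreover $C_G(g)$ has infinite index in $G$: otherwise $G$ would be virtually infinite cyclic, hence, being torsion-free, infinite cyclic, contradicting the assumption that $G$ is non-abelian. Therefore the conjugacy class of $g$ is infinite, and $\{1\}$ is the only finite conjugacy class of $G$.

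Now I would compute $Z(K(G))$. If $f=\sum_{x\in S}\alpha_x x$ with finite support $S$ lies in the center, then $g^{-1}fg=f$ for every $g\in G$, which forces $S$ to be stable under conjugation and the coefficient function $x\mapsto\alpha_x$ to be constant on conjugacy classes. Hence $S$ is a finite union of conjugacy classes of $G$, each of which must itself be finite; by the previous step $S\subseteq\{1\}$, so $f\in K\cdot 1$. Consequently $Z(K(G))=K\cdot 1$, and the $0$-definable subring $\langle\,Z(K(G));+,\cdot,0,1\,\rangle$ of $K(G)$ is canonically isomorphic to the field $K$. This is a $0$-interpretation of $K$ in $K(G)$; it is uniform in $K$ and $G$ because the defining formula $\zeta$ and the inherited ring operations do not mention $K$ or $G$.

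Finally, the action of $K$ on $K(G)$ is transported across this isomorphism to the map $Z(K(G))\times K(G)\to K(G)$, $(z,f)\mapsto zf$, whose graph $\{(z,f,h):\zeta(z)\wedge h=zf\}$ is $0$-definable by a formula again independent of $K$ and $G$. Putting the two pieces together interprets the field $K$ together with its action on $K(G)$ inside $K(G)$, uniformly. The only non-routine ingredient is the conjugacy-class statement of the second paragraph, which rests on the structure theory of centralizers in hyperbolic groups, so that is the step I would be most careful about; the rest is bookkeeping. An alternative way to pin down the scalars, bypassing the center, is the Diophantine description of $K$ inside $K(G)$ from Lemma~\ref{le:units}, available whenever $G$ is torsion-free and satisfies Kaplansky's unit conjecture; but the center argument is shorter and its uniformity is more transparent.
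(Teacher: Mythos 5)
Your proof is correct, and it takes a genuinely different route from the one this paper relies on. Where the paper's machinery (Lemma~\ref{le:units}, imported from \cite{KMeq}) characterizes the scalars via Kaplansky's unit conjecture --- $K$ is the set of $x$ such that $x=-1$ or both $x$ and $x+1$ are units, a Diophantine (existential) description that needs units in $K(G)$ to be trivial, i.e.\ left-orderability or the unique product property --- you instead pick out $K\cdot 1$ as the center. The content you need is that $Z(K(G))$ is $K$-spanned by class sums of finite conjugacy classes, together with the observation that a torsion-free non-abelian hyperbolic group is ICC: centralizers of nontrivial elements are infinite cyclic, of infinite index (else $G$ would be torsion-free virtually $\Z$, hence $\Z$, hence abelian), so every nontrivial conjugacy class is infinite. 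Both arguments are correct and both yield uniformity in $K$ and $G$ since the defining formula mentions neither. The trade-off is real: your center formula is $\Pi_1$ rather than Diophantine, so you give up existential definability (which matters for the Diophantine-problem results elsewhere in the paper), but you gain independence from the unit conjecture --- a tangible advantage since, as the paper itself notes, not every torsion-free hyperbolic group is LO, whereas the ICC fact you invoke holds for all of them. You already flag the Kaplansky alternative at the end, which is indeed what the paper uses; your version is the cleaner one for this particular statement.
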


 We will now prove the following results. 
 \begin{theorem} \label{th:hyp}Let $G$ be LO and  hyperbolic and $H$ a group such that there is an element in $H$ with a finitely generated centralizer. Then for any infinite fields $K$ and $L$, if $K(G) \equiv L(H)$ then 
 \begin{itemize}
 \item  $G \equiv H$ 
 \item $HF(K) \equiv HF(L)$.
 \end{itemize}
\end{theorem}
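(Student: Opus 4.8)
The first conclusion, $G\equiv H$, is immediate from Theorem \ref{th:lo}: since $G$ is LO, that theorem already yields $H$ LO, $K\equiv L$ and $G\equiv H$. So the real content is the second item, and since $HF(K)$, $S(K)$ and $S(K,\N)$ are mutually interpretable uniformly (the chain of interpretations recorded before Corollary \ref{c2}), it suffices to prove $S(K,\N)\equiv S(L,\N)$. The plan is to produce a single interpretation code $I$ in the language of rings such that $I$ interprets $S(K,\N)$ in $K(G)$ and, with the \emph{same} formulas, interprets $S(L,\N)$ in $L(H)$; then Hodges' Lemma \ref{Hodges} together with $K(G)\equiv L(H)$ finishes the job, since for every sentence $\sigma$ in the language of $S(\,\cdot\,,\N)$ one gets $S(K,\N)\models\sigma\iff K(G)\models\sigma^{I}\iff L(H)\models\sigma^{I}\iff S(L,\N)\models\sigma$.

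First I would collect the structural facts. By Theorem \ref{th:lo}, $H$ is LO, hence torsion-free, hence satisfies Kaplansky's unit conjecture and $L(H)$ is a domain; commutative transitivity is expressed by a first-order ($\Pi_1$) sentence true in the torsion-free hyperbolic group $G$, so by $G\equiv H$ it holds in $H$ as well. Thus Lemma \ref{pr:centralizer-hyp} applies to $H$: $C_{L(H)}(h)\cong L(C_{H}(h))$ for every $h\in H$, with $C_{H}(h)$ torsion-free abelian. The hypothesis furnishes a nontrivial $h_{0}\in H$ with $C_{H}(h_{0})$ finitely generated, hence $C_{H}(h_{0})\cong\Z^{k}$ for some finite $k\ge 1$ and $C_{L(H)}(h_{0})\cong L[x_{1},x_{1}^{-1},\dots,x_{k},x_{k}^{-1}]$. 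On the other side $G$ is torsion-free hyperbolic, so by the Corollary following Lemma \ref{pr:centralizer-hyp} \emph{every} nontrivial $g\in G$ has $C_{K(G)}(g)\cong K[t,t^{-1}]$.

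Now for the code $I$. By Lemma \ref{le:units} the field $K$ and (a copy of) the group $G$ are $0$-interpretable in $K(G)$ by formulas that depend on neither $K$ nor $G$, so the same formulas $0$-interpret $L$ and $H$ in $L(H)$. I then want a first-order formula $\Phi(g)$ selecting those $g$ that are nontrivial group elements whose centralizer in the ring is a ring of Laurent polynomials over the coefficient field in finitely many variables; by the previous paragraph $\Phi$ is satisfied in $K(G)$ by every nontrivial group element, and in $L(H)$ by $h_{0}$, so $\exists g\,\Phi(g)$ holds in both rings and every witness has centralizer $\cong(\text{field})[x_{1}^{\pm1},\dots,x_{n}^{\pm1}]$ for some finite $n\ge1$. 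Once such a $g$ is named, $C(g)$ is a $0$-definable subring, and inside it I run the interpretation of $S(\text{field},\N)$ in a ring of Laurent polynomials in finitely many variables given by Lemma \ref{th:HF-group1}, which is uniform both in the field and in the number of variables. Relativising the whole construction by the prefix $\exists g\,(\Phi(g)\wedge\cdots)$ — or, if one wants $I$ genuinely parameter-free, gluing the structures obtained from the different witnesses $g$ by definable isomorphisms exactly as in the proof of Theorem \ref{th:HF-group} — produces the desired $I$, whence $S(K,\N)\equiv S(L,\N)$ and thus $HF(K)\equiv HF(L)$.

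The delicate step is the formula $\Phi$. Among group algebras of commutative transitive torsion-free groups the centralizers that can arise are exactly the rings $(\text{field})(A)$ with $A$ torsion-free abelian, and one must separate, by a first-order property of the ring, the finitely generated $A$ (equivalently $A\cong\Z^{k}$ with $k$ finite) from all the others — for instance combining a B\'ezout-type condition that bounds the Krull dimension with a non-divisibility condition on the units that rules out the torsion-free abelian groups of finite rank that are not finitely generated. The formula has to be chosen so that simultaneously it is witnessed in $K(G)$ using only hyperbolicity of $G$, it is witnessed in $L(H)$ using only the hypothesis that $H$ has an element with finitely generated centralizer, and every one of its witnesses in either ring really does have a centralizer of the required form, so that the uniform Laurent-polynomial interpretation is guaranteed to output $S(\text{field},\N)$ rather than something spurious. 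Getting $\Phi$ to do all three things at once, and verifying the gluing, is where essentially all the work lies; the rest is an assembly of Theorems \ref{th:lo} and \ref{th:HF-group}, the Corollary after Lemma \ref{pr:centralizer-hyp}, and Lemmas \ref{le:units}, \ref{th:HF-group1} and \ref{Hodges}.
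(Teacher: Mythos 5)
Your first paragraph is fine: $G\equiv H$ does follow at once from Theorem \ref{th:lo}, the reduction of $HF(K)\equiv HF(L)$ to $S(K,\N)\equiv S(L,\N)$ is the standard one, and the strategy of producing one interpretation code $I$ and pushing sentences through Hodges' Lemma is exactly what the paper does (Proposition \ref{th:S(F,N)-non-comm} supplies the uniform $0$-interpretation of $S(K,\N)$ in $K(G)$; the delicate step is to check that the same formulas, evaluated in $L(H)$, output $S(L,\N)$ and not some elementarily equivalent but non-isomorphic structure, with a nonstandard $\N$-sort, for instance).

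The gap is precisely where you locate it, but it is also a self-inflicted one. You try to close it by producing a first-order formula $\Phi(g)$ that detects, ring-theoretically, ``centralizer $\cong L[x_1^{\pm1},\dots,x_k^{\pm1}]$ for some finite $k$,'' and you concede that writing $\Phi$ ``is where essentially all the work lies.'' As it stands that is a real gap: the Noetherian/finite-generation condition is not obviously first-order, your sketched B\'ezout-plus-non-divisibility candidate is not worked out, and you would still need to handle the gluing across witnesses of possibly different $k$ (you write $C_{L(H)}(h_0)\cong L[x_1^{\pm1},\dots,x_k^{\pm1}]$ without noticing that $k$ cannot be anything but $1$). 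More importantly, $\Phi$ is not needed at all, and the paper's route (spelled out in the proof of Theorem \ref{th:eeq}) avoids it. Because the isomorphisms $\mu_{P,Q}$ (resp.\ $\nu_{P,Q}$) of Lemma \ref{natural}(2) and Proposition \ref{th:S(F,N)-non-comm}(2) are defined by formulas that hold universally in $K(G)$, and being a ring isomorphism is itself expressed by first-order sentences, elementary equivalence forces \emph{every} centralizer $C_{L(H)}(h)$, $h\neq 1$, to be definably isomorphic to every other one and elementarily equivalent to $K[t,t^{-1}]$; in particular each is B\'ezout, which already forces $k=1$ if any of them is a Laurent ring. The hypothesis that one $h_0$ has a finitely generated centralizer makes that single $C_{L(H)}(h_0)$ Noetherian, hence by the definable isomorphisms they are all Noetherian, and a Noetherian ring elementarily equivalent to a one-variable Laurent ring is itself $K_1[t,t^{-1}]$ for a field $K_1$, with $K_1=L$ since the coefficient field is the $0$-definable set of units $u$ with $u+1$ also a unit (Theorem \ref{th:lo}). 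With every centralizer identified as $L[t,t^{-1}]$, the formulas of Proposition \ref{th:S(F,N)-non-comm} evaluated in $L(H)$ do produce $S(L,\N)$, and your Hodges argument finishes the proof. So: right frame, right reduction, but the missing ingredient is the transfer of Noetherianity along the definable isomorphisms of centralizers, not a new characterizing formula $\Phi$.
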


\begin{theorem} \label{th:sol} Let $G$ be a finitely generated free solvable  group and $H$ a group such that there is an element in $H$ with a finitely generated centralizer. Then for any infinite fields $K$ and $L$ if $K(G) \equiv L(H)$ then 
 \begin{itemize}
 \item  $G \equiv H$ 
 \item $HF(K) \equiv HF(L)$.
 \end{itemize}
\end{theorem}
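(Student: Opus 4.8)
The plan is to reduce Theorem \ref{th:sol} to the same machinery that yields Theorem \ref{th:hyp}, exploiting the fact that a finitely generated free solvable group $G$ is LO (indeed bi-orderable, as noted in the excerpt) and is commutative transitive (CT, also noted in the excerpt, via \cite{Wu}). First I would observe that, $G$ being LO, it satisfies the Kaplansky unit and zero-divisor conjectures, so Lemma \ref{le:units} applies: $K$ and $G$ are Diophantine-interpretable in $K(G)$, uniformly in $K$, and $K(G)$ is an integral domain. Since $K(G)\equiv L(H)$ and ``no zero divisors'' is a first-order sentence, $L(H)$ is also a domain, whence $H$ is torsion-free. Exactly as in the proof of Theorem \ref{th:lo}, the formula $\phi(x) = (x=-1)\vee(x\in K(G)^*\wedge (x+1)\in K(G)^*)$ defines $K^*$ inside $K(G)$ and defines a central (hence normal) subgroup $U\trianglelefteq L(H)^*$; by Lemma \ref{le:1+h}, $H\cap U = 1$, so $H$ embeds into $L(H)^*/U$, and $G\equiv L(H)^*/U$ since the same formulas interpret $G$ in $K(G)$ and $L(H)^*/U$ in $L(H)$. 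Applying Lemma \ref{le:equivLO} to the LO group $G$ gives that $L(H)^*/U$, and hence its subgroup $H$, is LO; consequently $K\equiv L$ and $G\equiv H$ by the uniform interpretability in Lemma \ref{le:units}. This already delivers the first bullet and the elementary equivalence of the fields.

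To strengthen $K\equiv L$ to $HF(K)\equiv HF(L)$ — equivalently $S(K,\N)\equiv S(L,\N)$ — I would bring in the centralizer hypothesis on $H$ together with the structure theory of centralizers in $K(G)$. Since $G$ is free solvable it is CT and torsion-free, so by Lemma \ref{pr:centralizer-hyp} every centralizer $C_{K(G)}(g)$ is isomorphic to $K(C_G(g))$, and for a suitable nontrivial $g$ (e.g. one lying in the last nontrivial term of the derived series, or any $g$ whose centralizer is a finitely generated free abelian group of some rank $r\ge 1$) this is the Laurent polynomial ring $K[t_1,t_1^{-1},\dots,t_r,t_r^{-1}]$. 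The property ``$x$ has a finitely generated centralizer in the ring'' should be expressible, or at least its consequences should be transferable, via elementary equivalence; one then picks in $L(H)$ an element $y$ with finitely generated centralizer $C_{L(H)}(y)$, which — again because $H$ is torsion-free and, being LO hence UP, its relevant centralizers behave well, or by a direct argument — is of the form $L(A)$ for a finitely generated abelian subgroup $A\le H$, i.e. a Laurent polynomial ring $L[s_1,s_1^{-1},\dots,s_m,s_m^{-1}]$ over $L$. By Theorem \ref{th:HF-group}(2), $S(K,\N)$ is interpretable (uniformly) in $K[t_1,t_1^{-1},\dots,t_r,t_r^{-1}]$ and $S(L,\N)$ in $L[s_1,s_1^{-1},\dots,s_m,s_m^{-1}]$, by the \emph{same} formulas up to the choice of parameters; since these centralizer subrings are themselves (uniformly) interpretable inside $K(G)$ and $L(H)$ respectively, Hodges' Lemma \ref{Hodges} gives that $S(K,\N)$ and $S(L,\N)$ are interpreted in $K(G)$ and $L(H)$ by the same first-order scheme, so $K(G)\equiv L(H)$ forces $S(K,\N)\equiv S(L,\N)$, i.e. $HF(K)\equiv HF(L)$.

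The main obstacle, as I see it, is the bookkeeping around the centralizer: one must guarantee that the element $y\in L(H)$ with finitely generated centralizer can be chosen so that $C_{L(H)}(y)$ is genuinely a group algebra of a finitely generated \emph{abelian} subgroup of $H$ (so that Theorem \ref{th:HF-group}(2) is applicable with honest Laurent-polynomial input), and that the whole interpretation of $S(\cdot,\N)$ inside the ambient group algebra is arranged so that the \emph{same} formula works for both $K(G)$ and $L(H)$ — i.e. that the interpretation is parameter-uniform and that the required parameters (the element whose centralizer we use, a non-invertible polynomial $P$ of at least three monomials therein, etc.) are themselves captured by a single first-order scheme. Once one knows $H$ is torsion-free and LO, and invokes CT of $G$ to pin down $C_{K(G)}(g)$ as a Laurent polynomial ring, the rest is an application of Lemmas \ref{natural}, \ref{le:limit1}, \ref{th:HF-group1}, Theorem \ref{th:HF-group}, and Hodges' lemma, in the same pattern used for Theorem \ref{th:hyp}; indeed the two theorems should admit essentially identical proofs, the only difference being the justification that $G$ (free solvable rather than LO hyperbolic) is LO and CT with at least one finitely-generated-free-abelian centralizer — which for free solvable groups follows from the cited results of Wu and the embedding into a non-standard free group.
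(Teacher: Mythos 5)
Your proposal is correct and takes essentially the same route as the paper's (extremely terse) proof: the paper just says ``Theorem \ref{th:sol} can be proved similarly'' to Theorem \ref{th:hyp}, whose proof is Proposition \ref{th:S(F,N)-non-comm}, i.e.\ 0-interpreting $S(K,\N)$ in $K(G)$ uniformly via a cyclic centralizer $K[t,t^{-1}]$ inside $K(G)$, and you reproduce exactly that scheme (LO plus Theorem \ref{th:lo} for the first bullet, CT plus Lemma \ref{pr:centralizer-hyp} plus Lemmas \ref{natural}/\ref{le:limit1}/\ref{th:HF-group1} for the second, with the finitely-generated-centralizer hypothesis on $H$ ensuring the same formulas produce $S(L,\N)$ rather than some non-Noetherian artifact). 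The ``obstacle'' you flag about parameter-uniformity is precisely what is handled by the gluing in part (3) of Proposition \ref{th:S(F,N)-non-comm} and the uniformity ``in $K$ and $n$'' in Lemma \ref{th:HF-group1}, so it is a real concern but already addressed by the cited lemmas.
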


{\bf Proof of Theorem \ref{th:hyp}.} First we will prove the following proposition.

 \begin{prop} \label{th:S(F,N)-non-comm} 
 Let  $K$ be an infinite field and $G$ be LO and non-elementary  hyperbolic. Then the following hold:
 \begin{itemize} \item [1)] for a given non-invertible  polynomial $P(g)\in K(G),\ g\in G$ which is the sum of at least three monomials one can interpret $S(K,\N)$ in $K(G)$ by $S(K,\N)_P$ above, using the parameter $P$ uniformly in $K, G,$ and $P$. 
 \item [2)] for any two such polynomials $P, Q \in K(G) $ the canonical (unique)  isomorphism of interpretations $\nu_{P,Q} : S(K,\N)_P \to S(K,\N)_Q$ is definable in $K(G) $ uniformly in $K$, $G$, $P$, and $Q$.
\item [3)]  $S(K,\N)$ is 0-interpretable  in $K(G) $  uniformly in $K$ and $G$.
 
 \end{itemize}
 \end{prop}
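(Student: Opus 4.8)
The plan is to reduce the three statements to the corresponding facts about Laurent polynomial rings already established in Lemmas \ref{natural}, \ref{le:limit1}, Theorem \ref{th:HF-group}, and Lemma \ref{pr:centralizer-hyp}. The key observation is that for a non-elementary hyperbolic (hence torsion-free, CT) group $G$, every nontrivial $g\in G$ has cyclic centralizer $C_G(g)=\langle t\rangle$, so by the Corollary to Lemma \ref{pr:centralizer-hyp} the centralizer $C_{K(G)}(g)$ is isomorphic to $K[t,t^{-1}]$. Moreover, since $G$ is LO it satisfies Kaplansky's unit and zero-divisor conjectures, so by Lemma \ref{le:units} the field $K$ and its action are interpretable in $K(G)$ uniformly, and the units of $K(G)$ are exactly $\alpha g$ with $\alpha\in K^{*}$, $g\in G$. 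The centralizer $C_{K(G)}(g)$ is definable with parameter $g$ (it is the set of elements commuting with $g$), and within it the subring $K[t,t^{-1}]$ is the whole centralizer.

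For part 1), given a non-invertible $P\in K(G)$ that is a sum of at least three monomials, first note that $P$ has an abelian centralizer in $K(G)$: indeed the support of $P$ generates (up to passing to the subgroup they lie in) a subgroup whose elements commute with $P$; more carefully, one shows $C_{K(G)}(P)$ is contained in the group algebra of a cyclic subgroup, hence isomorphic to $K[t,t^{-1}]$, and $P$ sits inside this copy of $K[t,t^{-1}]$ as an element that is a sum of at least three monomials and non-invertible there. This requires knowing that an element of $K(G)$ whose support does not lie in a cyclic subgroup has "small" (in fact trivial up to $K^{*}$) centralizer — this is the analogue for hyperbolic groups of the fact used for free groups. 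Once $P$ lives in a definable copy $R_P\cong K[t,t^{-1}]$ of the Laurent ring, apply Lemma \ref{natural}(1) to define $K[P]$ inside $R_P$ with parameter $P$, and then apply Lemma \ref{le:limit1} and Theorem \ref{th:HF-group}(2) to interpret $S(K,\N)$ inside $R_P$ using $P$; this is the interpretation denoted $S(K,\N)_P$. All of this is uniform in $K$, $G$, and $P$ because the defining formulas (centralizer, non-unit, "sum of at least three monomials" — the latter meaning "not a unit and not a sum of two units", using Kaplansky) are the same.

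For part 2), the canonical isomorphism $\nu_{P,Q}:S(K,\N)_P\to S(K,\N)_Q$ decomposes sort by sort. The first sort $K$ is interpreted the same way regardless of $P$ (it does not depend on the parameter), so the component there is the identity and is trivially definable. On the $\N$-sort the definable isomorphism $\mu_{P,Q}$ was built in Lemma \ref{natural}(2) (the canonical isomorphism $K[P]\to K[Q]$, which restricts to the $t$-power sorts), matching the construction recalled in Theorem \ref{th:HF-group}. On the $S(K)$-sort the isomorphism $\sigma_{P,Q}$ arising from the identity map on tuples is definable exactly as in the proof of Theorem \ref{th:HF-group}(1): two pairs $(f,P^n)$ and $(g,Q^m)$ correspond under $\sigma_{P,Q}$ iff $n=m$ and the coefficient tuples agree, which is expressed by a ring formula using the predicates $t_P$, $t_Q$ and the already-definable isomorphism on $K$ and on $\N$. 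Stitching these together gives $\nu_{P,Q}$ definably and uniformly. Part 3) then follows by the standard gluing argument (as in the last paragraph of the proof of Theorem \ref{th:HF-group}(1)): using the definable family of isomorphisms $\nu_{P,Q}$ one glues all the $S(K,\N)_P$ for non-invertible three-monomial $P$ into a single structure isomorphic to $S(K,\N)$, absolutely interpretable in $K(G)$ uniformly in $K$ and $G$, since such a $P$ exists in $K(G)$ (e.g. $1+g+g^2$ for a nontrivial $g$, which is non-invertible by Lemma \ref{le:1+h}-type reasoning, or more directly because it lies in $K[t,t^{-1}]$ and is non-invertible there).

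The main obstacle I expect is the claim in part 1) that a non-invertible $P$ which is a sum of at least three monomials genuinely lands inside a definable Laurent-polynomial subring of $K(G)$ on which we have the parameter-definability of $K[P]$ — i.e., controlling the centralizer of an arbitrary such element of $K(G)$ and showing it is (isomorphic to) $K[t,t^{-1}]$ with $P$ non-invertible and genuinely a sum of $\ge 3$ monomials there. For free groups this rests on structure of centralizers in $K(F)$ developed in Section 4 of the paper; for non-elementary hyperbolic $G$ one needs the analogous statement, presumably via the CT property and the fact (Lemma \ref{pr:centralizer-hyp}) that centralizers of group elements in $K(G)$ are group algebras of cyclic groups, combined with an argument that the centralizer of a ring element $P$ is contained in the centralizer of some group element in its support or is itself $K^{*}$-trivial. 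Everything else is an application of the Laurent-polynomial lemmas and the gluing technique already in hand.
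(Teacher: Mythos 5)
Your overall strategy — reduce to the single‑variable Laurent polynomial case by locating $P$ inside a definable copy of $K[t,t^{-1}]$ given by a centralizer, then apply Lemma~\ref{natural} and Theorem~\ref{th:HF-group}, and finally glue over all parameters — is exactly the paper's strategy. However, the complication you flag as your ``main obstacle'' is self-inflicted: the notation $P(g)\in K(G),\ g\in G$ in the statement means that $P$ is explicitly given as a Laurent polynomial in a group element $g$, so $P$ lives in the definable centralizer $C_{K(G)}(g)\cong K[t,t^{-1}]$ (by the Corollary to Lemma~\ref{pr:centralizer-hyp}) from the start. There is no need to analyze the centralizer $C_{K(G)}(P)$ of an arbitrary ring element and show it is the group algebra of a cyclic group; the paper simply uses $C_{K(G)}(g)$, and the worry you raise about elements whose support does not lie in a cyclic subgroup never arises.

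On part 2), you rely directly on Lemma~\ref{natural}(2) for the definable isomorphism $\mu_{P,Q}:K[P]\to K[Q]$. That lemma is stated and proved for $P,Q$ living in the \emph{same} ring $K[x,x^{-1}]$; in the proposition, $P=P(g_1)$ and $Q=Q(g_2)$ may lie in different centralizers of $K(G)$. Your divisibility formula can indeed be interpreted in $K(G)$, but you would need an argument that divisibility of $f(P)-\beta$ by $P-\alpha$ in $K(G)$ agrees with divisibility in $K[g_1,g_1^{-1}]$ (e.g.\ using that $K(G)$ is a free module over the centralizer), and that the formula then still picks out the canonical isomorphism. The paper instead goes through the definability of the set of pairs $\{(P^m,Q^m)\mid m\in\N\}$, which is exactly the technical ingredient it imports from \cite{assoc} (Lemmas 12 and 17). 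Your route is plausible but requires a justification you did not supply; the paper's route avoids it. Part 3) (gluing) is the same.
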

\begin{proof}

1) The structure $S(K,{\mathbb N})$ for an infinite field $K$ is interpretable in $K[x,x^{-1}]$ using a non-invertible polynomial $P\in K[x,x^{-1}]$ by Theorem \ref{th:HF-group}.  The algebra of Laurent polynomials   $K[x,x^{-1}]$ is definable in $K(G)$ as the  centralizer of an element in $G$. 

To prove 2) observe that since the arithmetic is interpretable, for any such $P$ and $Q$ there is a formula that defines the set of pairs $R = \{(P^m,Q^m) \mid m \in \N\}$ uniformly in $K$, $G$,  $P$, and $Q$ (see \cite{assoc} Lemma 12 (for char 0) and Lemma 17).
Recall that a sequence $s = (\alpha_0, \ldots,  \alpha_m)$ is interpreted in $S(K,\N)_P$ as a pair $s_P = (\sum_{i = 0}^m \alpha_iP^i, P^m) \in S(K)_P$, in $S(K,\N)_P$  and similarly, by the pair $s_Q = (\sum_{i = 0}^m \alpha_iQ^i, Q^m) \in S(K)_Q$ in $S(K,\N)_Q$.  We need to show that the set of pairs $\{(s_P,s_Q) \mid s \in S(K)\}$ is definable in $K(G)$ uniformly in $K, G, P, Q$. Since the set of pairs $R$ is definable it follows that the set of pairs  
$(s_P,r_Q)$ such that $s,r \in S(K)$ and $l_P(s_P) = l_Q(r_Q)$ (i.e., the lengths of the tuples $s$ and $r$ are equal)  is  definable in $K(G)$ uniformly in $K, G, P, Q$.  Recall that the predicates $t_P(s,i,a)$ define in $K(G)$ the coordinate functions $ s_P  \to a  \in K$, where $a$ is the $i$'s term of  the sequence $s_P$,  uniformly in $K, i, G,P$ (here $0 \leq i \leq l(s)$ and $K$ is viewed as the set of invertible elements $k$ in  $K(G)$ such that $k+1$ is also invertible). Therefore, there is a formula which states that for any $0 \leq i \leq l(s) = l(r)$ the sequences   $s_P$ and $r_Q$ have the same $i$ terms. Hence   
the set of pairs 
$$
\{ (\sum_{i = 0}^m \alpha_iP^i, \sum_{i = 0}^m \alpha_iQ^i) \mid \alpha_i \in K, m \in \N\}
$$
 is also definable in $K(G)$ uniformly in $K, G, P$ and $Q$. This gives an isomorphism $ S(K,\N)_P \to S(K,\N)_Q$ definable in $K(G)$ uniformly in $K$, $G$, $P$, and $Q$, as claimed.

 This completes the definition of  the isomorphism of interpretations $\nu_{P,Q} : S(K,\N)_P \to S(K,\N)_Q$, as claimed.

3) follows from 2).
\end{proof}
Theorem \ref{th:hyp} follows from the Proposition.  Theorem \ref{th:sol} can be proved similarly.

\section{Unique factorization in K(F)}

We recall that a finitely generated free group is bi-orderable. Two right ideals $I, I_1$ are called similar in a ring $R$ if the $R$-modules $R/I$ and  $R/I_1$  are isomorphic \cite{Cohn3}. Any non-unit which cannot be written as a product of two non-units is said to be {\em irreducible} or an {\em atom.} An integral domain is said to be atomic if every element which is neither  zero nor a unit, is a product of atoms. Cohn defines a non-commutative UFD (unique factorization domain) as an integral domain which is atomic and is such that any two complete factorizations (into atoms) of an element $x$ are isomorphic, in the sense that they have the same length, say $x=x_1\ldots x_r=y_1\ldots y_r$ and there is a permutation $i\rightarrow i'$ of $1,\ldots , r$ such that $x_i$ is similar to $y_{i'}$ (generate similar right ideals, and, therefore, left ideals \cite{Cohn3}).

\begin{lemma}(\cite{Cohn3}, Theorem 7.11.8) \label{le:co}  A group algebra $K(F)$ of a free group is a UDF. Any two principal right (left) ideals in $K(F)$ with nonzero intersection have a sum and intersection which are again principal.\end{lemma}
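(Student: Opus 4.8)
The plan is to read off both assertions from P.\,M.\ Cohn's theory of firs, whose hard core is exactly the cited theorem: that $R := K(F)$ is a \emph{free ideal ring}, i.e.\ every left ideal and every right ideal of $R$ is a free $R$-module of unique rank. I would obtain this along Cohn's lines, by checking that $R$ satisfies a weak algorithm relative to the filtration of $R$ by the powers $\omega^{n}$ of the augmentation ideal; here the key structural facts are that $\bigcap_n \omega^n = 0$ and that the associated graded ring $\bigoplus_n \omega^n/\omega^{n+1}$ is the free associative algebra $K\langle X\rangle$ on a free basis $X$ of $F$ (Magnus--Fox), so that the degree weak algorithm for $K\langle X\rangle$ lifts to $R$. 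Granting this, $R$ is in particular a semifir and a $2$-fir, an integral domain, and has only trivial units, consistent with Lemma~\ref{le:units}.

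For the factorization statement, the remaining point beyond the cited theorem is that $R$ is \emph{atomic}; this is genuinely part of the theorem's content and, as for $K\langle X\rangle$ itself (which is atomic because non-scalar elements have total degree $\geq 1$ and degrees add, $K\langle X\rangle$ being graded and a domain), it is deduced from the weak algorithm: the degree function it supplies forbids infinite ascending chains of principal right ideals. With atomicity in hand, Cohn's unique factorization theorem for atomic $2$-firs applies directly: any two complete factorizations of an element of $R$ into atoms have the same length, and after a permutation the corresponding atoms are similar (generate similar right, hence left, ideals). Thus $R$ is a non-commutative UFD in the sense recalled before the lemma.

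For the statement on ideals, let $a,b \in R$ with $aR \cap bR \neq 0$. Since $R$ is a domain, $aR \cong R \cong bR$ as right $R$-modules, so the homomorphism $\phi : R^2 \to R$, $(r,s)\mapsto ar - bs$, has image $aR + bR$ and kernel isomorphic to $aR \cap bR$ (via $(r,s)\mapsto ar = bs$). As $R$ is a semifir, the finitely generated right ideal $aR + bR$ is a free module; hence the exact sequence $0 \to \ker\phi \to R^2 \to aR + bR \to 0$ splits, making $\ker\phi$ a finitely generated projective and therefore free $R$-module, and by invariance of rank $\operatorname{rk}(\ker\phi) + \operatorname{rk}(aR+bR) = 2$. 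Both summands are nonzero ($a \neq 0$ lies in $aR+bR$, and $\ker\phi \cong aR\cap bR \neq 0$), so each has rank $1$; thus $aR+bR$ and $aR\cap bR$ are free of rank one, i.e.\ principal, and the left-hand case is symmetric. The main obstacle throughout is the very first step — that $K(F)$ is a fir and is atomic — which is the substantive content of Cohn's theorem; once it is available, the two conclusions are a formal factorization argument and a rank count.
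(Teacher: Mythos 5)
The paper offers no proof of this lemma at all — it is quoted directly from Cohn (\cite{Cohn3}, Theorem 7.11.8) — and your proposal is essentially a faithful reconstruction of Cohn's own argument: establish the fir property via a weak algorithm using the augmentation-ideal filtration and the Magnus--Fox identification of $\mathrm{gr}\,K(F)$ with $K\langle X\rangle$, deduce atomicity and the UFD property, and then obtain the principal sum and intersection from the semifir rank count on $0\to aR\cap bR\to R^2\to aR+bR\to 0$. So the two take the same route; you have simply unfolded the citation into its proof.
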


An element $c\in K(F)$ is called {\em rigid} if equality $c=ab'=ba'$ implies that either $a$ is a left factor of $b$ or $b$ is a left factor of $a$.
A relation $ab_1=ba_1$ in $K(F)$ is called comaximal if there exist $c,d,c_1,d_1\in K(F)$ such that $da_1-cb_1=ad_1-bc_1=1$. 
\begin{lemma} \label{le:comax} (\cite{Cohn3}, Proposition 3.2.9) Given any two complete atomic factorizations of an element in $K(F)$, one can pass from one to the other
by a series of comaximal transpositions. If a comaximal transposition $ab_1=ba_1$  exists, then $aK(F)+b_1K(F)=K(F).$
\end{lemma}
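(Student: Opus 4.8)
Since $K(F)$ is a fir (this is the ring-theoretic content underlying Lemma \ref{le:co}), both statements can be obtained from the factorization theory of firs. My plan is to deduce the first statement from a Schreier/Jordan--H\"older refinement theorem in a suitable modular lattice, and the second statement by unwinding the definitions together with the arithmetic of principal one-sided ideals.

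First I would fix the element $c \neq 0$ being factored and consider the poset $\mathcal{L}$ of principal right ideals $I$ with $cK(F) \subseteq I \subseteq K(F)$. The interval $[cK(F), K(F)]$ in the lattice of all right ideals of $K(F)$ is modular (it is an interval in the submodule lattice of $K(F)$ as a right module over itself), with meet $=$ intersection and join $=$ sum; since every member contains $cK(F) \neq 0$, Lemma \ref{le:co} shows that sums and intersections of members of $\mathcal{L}$ are again principal, so $\mathcal{L}$ is a sublattice and hence itself modular. A complete atomic factorization $c = p_1 \cdots p_r$ corresponds to the chain $cK(F) = p_1 \cdots p_r K(F) \subsetneq p_1 \cdots p_{r-1} K(F) \subsetneq \cdots \subsetneq p_1 K(F) \subsetneq K(F)$, which is a maximal chain in $\mathcal{L}$ precisely because each $p_i$ is an atom. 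By the Schreier refinement theorem for modular lattices of finite length, any two maximal chains with the same endpoints are joined by finitely many elementary moves, each replacing two consecutive covering steps $x \subsetneq y \subsetneq z$ of a chain by the complementary pair $x \subsetneq y' \subsetneq z$ inside the height-two interval $[x,z]$. Writing $z = wK(F)$, $y = waK(F)$, $y' = wbK(F)$ and $x = y \cap y' = w(aK(F)\cap bK(F))$, principality of $aK(F)\cap bK(F)$ yields an element $ab_1 = ba_1$ generating it; so the move amounts to replacing a sub-product $p_i p_{i+1}$ of two consecutive atoms by $p_i' p_{i+1}'$, where (after clearing units) $p_i p_{i+1} = p_i' p_{i+1}'$ is exactly the relation $ab_1 = ba_1$. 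Iterating converts one atomic factorization of $c$ into the other. (The same move also shows up in the chain of principal \emph{left} ideals attached to the factorization.)

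It remains to see that the relations produced are comaximal and to get $aK(F) + b_1K(F) = K(F)$. For a nontrivial move the two middle elements $y,y'$ of the diamond $[x,z]$ are distinct complements, so $y \vee y' = z$ and $y \wedge y' = x$. From $y + y' = z$, i.e. $w(aK(F) + bK(F)) = wK(F)$, left cancellation in the domain $K(F)$ gives $aK(F) + bK(F) = K(F)$, which is precisely the existence of $c_1,d_1$ with $ad_1 - bc_1 = 1$; running the same argument in the left-ideal chain (and cancelling on the right) gives $K(F)a_1 + K(F)b_1 = K(F)$, i.e. the existence of $c,d$ with $da_1 - cb_1 = 1$. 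Thus $ab_1 = ba_1$ is a comaximal relation, and $aK(F) + b_1K(F) = K(F)$ is then extracted by a short further computation with principal ideals in the $2$-fir $K(F)$ (using $ab_1 = ba_1$, $aK(F)\cap bK(F) = ab_1K(F)$, and the two coprimality identities). The step I expect to be the real obstacle is exactly this bookkeeping: faithfully turning the abstract lattice perspectivities into explicit comaximal relations over the noncommutative domain $K(F)$, keeping left versus right ideals and units straight, and checking that each elementary move can be arranged between two \emph{adjacent} atoms; the lattice-theoretic Schreier refinement itself is routine.
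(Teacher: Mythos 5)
The paper offers no proof of this lemma: it is cited verbatim from Cohn's \emph{Free Ideal Rings and Localization in General Rings}, Proposition 3.2.9. Your reconstruction -- pass to the modular lattice $\mathcal{L}$ of principal right ideals in $[cK(F),K(F)]$, note that it is a finite-length modular sublattice of the submodule lattice by Lemma \ref{le:co}, connect the two maximal chains by elementary diamond moves, and read each move off as a relation $ab_1 = ba_1$ between consecutive atoms -- is exactly the route Cohn takes, and the first half of your argument is sound. In particular you are right that the two coprimality identities in the definition of a comaximal relation come from the two lattices: $aK(F)+bK(F)=K(F)$ from the diamond in the right-ideal chain, and $K(F)a_1 + K(F)b_1 = K(F)$ from the mirror diamond in the left-ideal chain, both after cancelling the common prefix (resp. suffix) in the domain $K(F)$.

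The gap is the very last claim. You write that $aK(F)+b_1K(F)=K(F)$ "is then extracted by a short further computation with principal ideals," but no such computation appears, and it is not short. In a $2$-fir the right ideal $aK(F)+b_1K(F)$ is one of three things: all of $K(F)$; a free module of rank $2$ (the case $aK(F)\cap b_1K(F)=0$); or, since $a$ and $b_1$ are atoms, equal to $aK(F)=b_1K(F)$. To prove the stated conclusion you must actually \emph{exclude} the latter two possibilities, i.e.\ show that two consecutive atoms of $c$ admitting a non-trivial comaximal transposition can be neither right-associated nor right-linearly independent. The identities $ad_1-bc_1=1$, $da_1-cb_1=1$, $ab_1=ba_1$, and $aK(F)\cap bK(F)=ab_1K(F)$ do not yield $aK(F)+b_1K(F)=K(F)$ by direct algebraic manipulation -- try it and you will find the asymmetry between $b$ and $b_1$ blocks every attempt -- so this is not bookkeeping but a genuine structural input. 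It is precisely what the cited Cohn result supplies, and your write-up would need either to reproduce that argument or to cite it explicitly rather than wave at a computation.
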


 \begin{lemma}(\cite{Cohn3}, Proposition 3.2.9 and Proposition 3.3.6)\label{un}  Let $c=a_1\ldots a_n$ be a decomposition  of an element $c$ into atoms, $c\in R=K(F)$. If for any $i=1,\ldots n-1$,  $a_i$ and $a_{i+1}$ generate a proper ideal, then $c$ is rigid.
\end{lemma}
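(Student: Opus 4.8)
The plan is to reconstruct this from the machinery already quoted, namely Lemma \ref{le:co} (the principal right ideals over $cR$ form a well-behaved lattice) and Lemma \ref{le:comax} (comaximal transpositions connect all complete atomic factorizations). Write $R=K(F)$. Since $c$ is a product of $n$ atoms, $c\neq 0$ and $c$ is a non-unit; let $\mathcal{L}$ be the poset of principal right ideals $I$ with $cR\subseteq I\subseteq R$, ordered by inclusion. Any two members of $\mathcal{L}$ have intersection containing $cR\neq 0$, hence nonzero, so by Lemma \ref{le:co} their sum and intersection are again principal and lie in $\mathcal{L}$; thus $\mathcal{L}$ is a lattice. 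It has finite length, bounded by $n$: any chain in $\mathcal{L}$ refines to a product of atoms by cancelling successive left factors, and every complete atomic factorization of $c$ has length $n$ because $R$ is a UFD. The maximal chains $R=I_0\supsetneq I_1\supsetneq\cdots\supsetneq I_n=cR$ of $\mathcal{L}$ correspond to the complete atomic factorizations $c=b_1\cdots b_n$ via $I_j=b_1\cdots b_jR$: a covering step $I_{j-1}\supsetneq I_j$ (no principal ideal strictly between) is exactly the assertion that $b_j$ is an atom, by a cancellation argument in the domain $R$.

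The heart of the argument is to show $\mathcal{L}$ is totally ordered. Let $\mathcal{C}_0$ be the maximal chain attached to the given factorization $c=a_1\cdots a_n$. By Lemma \ref{le:comax} every complete atomic factorization of $c$ is reached from $c=a_1\cdots a_n$ by a finite sequence of comaximal transpositions, and the \emph{first} move of such a sequence is itself a comaximal transposition of $c=a_1\cdots a_n$: it replaces an adjacent pair $a_ia_{i+1}$ by $ba_i'$ with $a_ia_{i+1}=ba_i'$ comaximal, and by the second assertion of Lemma \ref{le:comax} this forces $a_iR+a_{i+1}R=R$. Since by hypothesis $a_iR+a_{i+1}R\neq R$ for every $i$, no comaximal transposition of $c=a_1\cdots a_n$ exists, so $c=a_1\cdots a_n$ is the only complete atomic factorization and $\mathcal{C}_0$ is the only maximal chain of $\mathcal{L}$. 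In a finite bounded lattice every element $I$ lies on some maximal chain (splice a maximal chain of $[cR,I]$ onto one of $[I,R]$), and two elements on a common maximal chain are comparable; hence a finite lattice with a unique maximal chain is itself a chain, so $\mathcal{L}$ is totally ordered.

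It remains to deduce rigidity. Suppose $c=ab'=ba'$ in $R$. Since $c\neq 0$ and $R$ is a domain, $a\neq 0\neq b$, and from $c=ab'$ and $c=ba'$ we get $cR\subseteq aR$ and $cR\subseteq bR$, so $aR,bR\in\mathcal{L}$. As $\mathcal{L}$ is a chain, either $aR\subseteq bR$, whence $a\in bR$ and $b$ is a left factor of $a$, or $bR\subseteq aR$, whence $a$ is a left factor of $b$. In either case the rigidity condition holds, so $c$ is rigid.

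The step I expect to be the main obstacle — and the one easiest to gloss over — is the ``rootedness'' used in the middle paragraph: Lemma \ref{le:comax} only guarantees that the set of complete atomic factorizations is connected under comaximal transpositions, and the real content is that any path out of the distinguished factorization $c=a_1\cdots a_n$ must begin with a comaximal transposition of that very factorization, so the ideal-sum condition it requires is precisely the one the hypothesis forbids; this is what upgrades ``connectedness'' into ``$c=a_1\cdots a_n$ is the unique factorization,'' hence ``$\mathcal{L}$ is a chain.'' One should also check the two bookkeeping points invoked above — that distinct maximal chains of $\mathcal{L}$ correspond to genuinely distinct factorizations (they already differ as chains of right ideals, not merely by rescaling atoms by units) and that $\mathcal{L}$ has finite length — both routine consequences of Lemma \ref{le:co}.
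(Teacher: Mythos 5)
The paper does not give its own proof of Lemma~\ref{un}: it is quoted from Cohn's book (Proposition~3.2.9 and Proposition~3.3.6 of \cite{Cohn3}) without reproduction. Your reconstruction, built entirely from the two tools the paper does state --- Lemma~\ref{le:co} (sums and intersections of principal right ideals meeting nontrivially are again principal) and Lemma~\ref{le:comax} (connectedness of complete atomic factorizations under comaximal transpositions, plus the ideal-sum consequence) --- is correct and is in substance the standard argument Cohn uses: pass to the interval $\mathcal{L}=[cR,R]$ in the lattice of principal right ideals, identify its maximal chains with complete atomic factorizations, observe that the hypothesis $a_iR+a_{i+1}R\neq R$ for all $i$ blocks every comaximal transposition of the given factorization (since by Lemma~\ref{le:comax} such a transposition would force $a_iR+a_{i+1}R=R$), conclude uniqueness of the maximal chain and hence that $\mathcal{L}$ is a chain, and read off rigidity from comparability of $aR$ and $bR$.

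Two small points are worth tightening in the write-up, though neither affects correctness. First, $\mathcal{L}$ need not be \emph{finite} --- only of finite length bounded by $n$; your splicing argument (extend a maximal chain of $[cR,I]$ by one of $[I,R]$) works under finite length, which is all you actually prove, so replace ``finite (bounded) lattice'' by ``lattice of finite length.'' Second, you should say explicitly that two maximal chains of $\mathcal{L}$ are distinct iff the corresponding complete factorizations differ by more than unit rescaling of the factors, so that ``unique maximal chain'' and ``unique complete atomic factorization up to units'' are genuinely the same statement --- you flag this as a bookkeeping point at the end, but it belongs in the main argument since it is precisely what lets Lemma~\ref{le:comax} guarantee a \emph{nonempty} sequence of comaximal transpositions whose first step then yields the contradiction.
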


We recall Bergman's ordering of the free group.  Let $\mathbb Q\langle\langle Y\rangle\rangle$ be the free power series ring over the rationals. 
If $|Y| = r$ , the elements $x=1+y(y\in Y)$with inverses $\Sigma (-1)^ny^n$ form the free generators of a free group $F(X)$ of rank $r$. 
One then orders the units of $\mathbb Q\langle\langle Y\rangle\rangle$ with constant term 1 by ordering lexicographically for each $n$ the
$\mathbb Q$-vector space of polynomials homogeneous of degree $n$, and comparing two elements by looking at the
first degree in which they disagree, and seeing which has ÔÔlargerÕÕ component in that degree.  This induces the total order on the free group .
Below  we order  $y_1< y_2\ldots <y_n$  polynomials of degree 1, and  $y_2y_1>y_1y_2>y_1^2$.  Then, for example, $1+2y_1+3y_2<1+3y_1+2y_2$. Then $x_1>x_2$ because $x_1=1+y_1, x_2=1+y_2$, so they disagree in degree 1 and $1+y_1$ corresponds to a tuple $(1,0)$ while $1+y_2$ corresponds to a tuple $(0,1)$ .

\begin{lemma} \label{ab}  Element $(1-x_1^mx_2^n)$ is irreducible in the algebra of Laurent polynomials $K[x_1,x_2,x_1^{-1},x_2^{-1}]$ if and only if $m$ and $n$ are relatively prime.
\end{lemma}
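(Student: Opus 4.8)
The plan is to work with the substitution $x_1 = 1+y_1$, $x_2 = 1+y_2$ (Bergman's embedding of $F(x_1,x_2)$ into $\mathbb{Q}\langle\langle y_1,y_2\rangle\rangle$) only when convenient, but the cleanest route is to compare the Laurent polynomial ring with the ordinary one. First I would reduce to the polynomial case: in $K[x_1,x_2,x_1^{-1},x_2^{-1}]$ the element $1-x_1^mx_2^n$ is associate (up to the unit $x_1^{-a}x_2^{-b}$) to a \emph{polynomial} obtained by clearing the negative exponents, and since units in the Laurent ring are exactly the monomials (infinite cyclic groups, hence $\Z^2$, satisfy Kaplansky's unit conjecture, as recalled in Section \ref{2.3}), irreducibility in $K[x_1^{\pm1},x_2^{\pm1}]$ is equivalent to irreducibility of that associated polynomial in $K[x_1,x_2]$. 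Concretely, if $\gcd(m,n)=d$ and we write $m=dm'$, $n=dn'$, then $1-x_1^mx_2^n = 1-(x_1^{m'}x_2^{n'})^d$, so after the monomial substitution $u = x_1^{m'}x_2^{n'}$ we are looking at $1-u^d$, which factors as $\prod_{\zeta^d=1}(1-\zeta u)$ over the algebraic closure and already as $(1-u)(1+u+\cdots+u^{d-1})$ over $K$ when $d\ge 2$; this handles the ``only if'' direction, since a nontrivial monomial substitution carries a genuine factorization back to a genuine factorization in the Laurent ring.

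For the ``if'' direction — $\gcd(m,n)=1$ implies $1-x_1^mx_2^n$ is irreducible — I would argue as follows. After clearing denominators we may assume $m,n\ge 0$ and (by the coprimality) not both zero; say $1-x_1^mx_2^n$ with $m,n\ge 1$ coprime, or the degenerate cases $1-x_1$, $1-x_2$ which are visibly linear hence irreducible. The key geometric fact is that the Newton polytope of $1-x_1^mx_2^n$ is the segment from $(0,0)$ to $(m,n)$, and this segment contains \emph{no} lattice point in its interior precisely because $\gcd(m,n)=1$. If $1-x_1^mx_2^n = f\cdot g$ were a nontrivial factorization in $K[x_1,x_2]$, then by additivity of Newton polytopes under multiplication (Ostrowski's theorem) the Newton polytopes of $f$ and $g$ would be sub-segments $[(0,0),(a,b)]$ and $[(a,b),(m,n)]$ with $(a,b)$ a lattice point on the open segment — impossible. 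Hence no such factorization exists. (Alternatively, one can substitute $x_1 = t^{n}$, $x_2 = t^{-m}$ composed with a generic monomial change of variables realizing the coprimality via Bézout, reducing $1-x_1^mx_2^n$ to $1 - t$ up to units, and track that a factorization would descend.)

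The main obstacle I anticipate is making the Newton-polytope argument fully rigorous over an \emph{arbitrary} field $K$ (not just algebraically closed), in particular ensuring that passing between the Laurent ring and $K[x_1,x_2]$ really does preserve and reflect irreducibility — one must check that the monomial-substitution isomorphism $K[x_1^{\pm1},x_2^{\pm1}] \cong K[u^{\pm1},v^{\pm1}]$ induced by a unimodular change of exponents (available since $\gcd(m,n)=1$, by completing $(m,n)$ to a basis of $\Z^2$) carries $1-x_1^mx_2^n$ to $1-u$, which is clearly irreducible as it is linear. With this unimodular substitution in hand the ``if'' direction becomes immediate, and the only remaining care is the bookkeeping that units are precisely monomials so that ``irreducible'' is unambiguous; that is exactly what Kaplansky's unit conjecture for $\Z^2$ (Section \ref{2.3}) supplies. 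I would therefore organize the final write-up around the unimodular change of variables rather than Ostrowski's theorem, as it is elementary and field-independent.
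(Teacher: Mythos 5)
Your proof is correct but takes a genuinely different route from the paper's. The paper's argument works inside the Bergman power-series embedding: it normalizes a hypothetical factorization $1-x_1^mx_2^n=fg$ so that every monomial of $f$ and $g$ is $\geq 1$ in the induced order, then specializes $x_1\mapsto z^{-n}$, $x_2\mapsto z^m$ (sending $1-x_1^mx_2^n$ to $0$), observes that one factor, say $f=\sum a_{ij}x_1^ix_2^j$, must vanish under this specialization, and finally shows — using $\gcd(m,n)=1$ and the constraint $0\le i\le m-1$ forced by the ordering — that distinct monomials of $f$ map to distinct powers of $z$, so no cancellation is possible, a contradiction. Your preferred route — completing $(m,n)$ to a basis of $\Z^2$ via Bézout and applying the induced $GL_2(\Z)$-automorphism of $K[x_1^{\pm1},x_2^{\pm1}]$ to carry $1-x_1^mx_2^n$ to $1-u$, which is visibly irreducible — is shorter, more conceptual, and transparently field-independent. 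In effect the paper uses the non-invertible map $(i,j)\mapsto mj-ni$ and must then do bookkeeping to show it is injective on the relevant exponents; you upgrade it to the full unimodular automorphism and bypass that bookkeeping entirely, while also dispensing with the detour through $K[x_1,x_2]$ (so the slight subtlety about irreducibility transferring between the Laurent and polynomial rings, which you flag, never actually arises). The ``only if'' direction is the same in both (a proper power gives the obvious factorization $1-u^d=(1-u)(1+u+\cdots+u^{d-1})$), and your alternative Newton-polytope argument is also sound, though, as you say, the unimodular substitution is the cleaner finish.
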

\begin{proof} If $m$ and $n$ are not relatively prime then $x_1^mx_2^n$ is a proper power and $(1-x_1^mx_2^n)$ is reducible.

Suppose now that $m$ and $n$ are relatively prime, wlog $m,n>0$.  The order on the free group $F$ also gives the order on the free abelian group $F/[F,F]$ because elements in $[F,F]$ are smaller than elements not in $[F,F]$. Then $x_1^kx_2^t>1$ for $k>0$  and $x_1^kx_2^t<1$ for $k<0$ .
Indeed, in the algebra of formal power series for $k,t\geq 0$ we have $x_1^kx_2^t=(1+y_1)^k(1+y_2)^t$, and the lowest  term of  degree 1 is $ky_1$.

%Element $(1-x^my^n)$ is irreducible in $K(F_{ab})$ if and only if $x^m-y^n$ is irreducible. 

Assume now that  $1-x_1^mx_2^n=f(x_1,x_2)g(x_1,x_2)$ is a non-trivial factorization. Then all the monomials in $f(x_1,x_2), g(x_1,x_2)$ are greater than or equal to $1$. Substitute $x_1=z^{-n}, x_2=z^m$, then $f(z^{-n},z^m)g(z^{-n},z^m)=0$. Therefore one of $f(z^{-n},z^m)$ or $g(z^{-n},z^m)$, 
wlog $f(z^{-n},z^m)$ is the zero polynomial. This means that  all the monomials cancel, i.e. if 
$$f(x_1,x_2)=\sum a_{ij} x_1^ix_2^j,$$ then
$\sum _{-ni+mj=k} a_{ij}=0.$ We also have $0\leq i\leq m-1$ because for any $i,j$,  $x_1^ix_2^j\geq 1$.  
We cannot have $mj-ni=mj_1-ni_1$ because this would imply $m(j-j_1)=n(i-i_1)$ and (since $m$ and $n$ are relatively prime),
$j_1=j \ (mod\ n)$ and  $i_1=i \ (mod\ m)$. Therefore $i=i_1$ and $j=j_1$. Therefore when we substitute $x_1=z^{-n}, x_2=z^m$ none of the monomials in 
 $f(x_1,x_2)$ will cancel out, and  $f(z^{-n}, z^m)$ cannot be a zero polynomial. This contradicts to the assumption about the non-trivial factorization $1-x_1^mx_2^n=f(x_1,x_2)g(x_1,x_2).$
\end{proof}
Notice that in the case  $h=g^k$, $k>1$,  $(1-h)$ is reducible. Indeed, $(1-h)=(1-g)(1+g+g^2+\ldots +g^{k-1})$. 
\begin{lemma} \label{irr}  Let $F=F(x_1,\ldots ,x_k)$ and $h=x_1^mx_2^nz$, where $m\neq 0$ or $n\neq 0$,  $z\in [F,F]$.
If $(1-h)$ is reducible in $K(F)$, then $x_1^mx_2^n$ is a proper power in the quotient  $F/[F,F]$. \end{lemma}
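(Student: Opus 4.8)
The plan is to prove the contrapositive: if $x_1^mx_2^n$ is \emph{not} a proper power in $F/[F,F]=\mathbb Z^k$ --- equivalently $\gcd(m,n)=1$, with the convention $\gcd(a,0)=|a|$, so that $(m,n,0,\dots,0)$ is a primitive vector --- then $1-h$ is irreducible in $K(F)$. The first step is to turn $\gcd(m,n)=1$ into a $\mathbb Z$-grading on $K(F)$: choose integers $p,q$ with $mp+nq=1$ and define a surjection $\psi\colon F\to\mathbb Z$ by $\psi(x_1)=p$, $\psi(x_2)=q$, $\psi(x_i)=0$ for $i\ge 3$; since $z\in[F,F]$ we get $\psi(h)=mp+nq=1$. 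Then $K(F)=\bigoplus_{j\in\mathbb Z}R_j$, where $R_j$ is the $K$-span of $\psi^{-1}(j)$; one has $R_0=K(\ker\psi)$ and $R_j=R_0h^j=h^jR_0$.

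Because $F$ is bi-orderable, $K(F)$ has no zero divisors, so for nonzero $a$ the \emph{span} $\mathrm{sp}(a):=\deg^+(a)-\deg^-(a)$ (top minus bottom nonzero graded degree) is additive, $\mathrm{sp}(ab)=\mathrm{sp}(a)+\mathrm{sp}(b)$, exactly as for degrees of polynomials. Here $h\ne 1$ since $\psi(h)\ne 0$, the element $1-h=1\cdot h^0+(-1)h^1$ has $\mathrm{sp}(1-h)=1$, and $1-h$ is not a unit, since every unit of $K(F)$ is trivial (Kaplansky's unit conjecture for the bi-orderable group $F$) and hence has span $0$.

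Now suppose $1-h=fg$ with $f,g\in K(F)$; both are nonzero (the product is), and $\mathrm{sp}(f)+\mathrm{sp}(g)=1$, so after possibly swapping them assume $\mathrm{sp}(f)=0$. Then $f$ is homogeneous, say $f=rh^d$ with $0\ne r\in R_0$, and $\mathrm{sp}(g)=1$, so $g=s_eh^e+s_{e+1}h^{e+1}$ with $s_e,s_{e+1}\in R_0\setminus\{0\}$. Writing $\sigma$ for conjugation by $h$ (an automorphism of $\ker\psi$, hence of $R_0$), and using $h^d s=\sigma^d(s)h^d$, one expands
$$fg=r\,\sigma^d(s_e)\,h^{d+e}+r\,\sigma^d(s_{e+1})\,h^{d+e+1}.$$
Comparing graded components with $1-h$ forces $d+e=0$ and $r\,\sigma^d(s_e)=1$ in $R_0$. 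In a domain a one-sided inverse is two-sided, so $r$ is a unit of $R_0=K(\ker\psi)$; since $\ker\psi$ is a subgroup of a free group it is bi-orderable, so this unit is trivial, $r=\alpha w$ with $\alpha\in K^\times$ and $w\in\ker\psi$, whence $f=rh^d$ is a trivial unit of $K(F)$. The case $\mathrm{sp}(g)=0$ is symmetric and shows $g$ is a unit. So $1-h$ admits no factorization into two non-units, i.e. it is irreducible, which is the contrapositive of the lemma.

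The one genuinely delicate point is the passage from "$f$ is a homogeneous factor" to "$f$ is a unit." If instead one tries the naive route --- abelianize $F\to F/[F,F]$, apply Lemma~\ref{ab} --- one only learns that the image of one factor is a monomial, which is strictly weaker, since the factor itself might be a true non-unit lying over a monomial. Grading by $\psi$ with $\psi(h)=1$ resolves this because it pins the top and bottom of $1-h$ exactly one degree apart, so a homogeneous factor is forced to absorb the identity term, which is impossible unless it is invertible. One should also keep an eye on the degenerate cases $m=0$ or $n=0$ (then $\gcd(m,n)=1$ forces $n=\pm1$, resp. $m=\pm1$, and the homomorphism $\psi$ is still available), though replacing $h$ by $h^{-1}$ --- harmless, as $1-h$ and $1-h^{-1}=-h^{-1}(1-h)$ are associates and $(m,n)\mapsto(-m,-n)$ changes nothing --- turns out not to be needed.
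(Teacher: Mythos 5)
Your proof is correct, and it takes a genuinely different route from the paper's. The paper passes to the abelianization $K(F/[F,F])$, invokes Lemma~\ref{ab} to conclude that one of the two factors becomes a monomial there, and then uses the Bergman ordering on $F$ to argue that all monomials of that factor must lie in $[F,F]$, so that the constant part of the other factor provides an inverse. You instead construct a $\mathbb{Z}$-grading of $K(F)$ by choosing a surjection $\psi\colon F\to\mathbb Z$ with $\psi(h)=1$ (possible precisely because $(m,n,0,\dots,0)$ is primitive), so that $1-h$ sits in degrees $0$ and $1$ only; since $K(F)$ is a graded domain, the span $\mathrm{sp}(ab)=\mathrm{sp}(a)+\mathrm{sp}(b)$ is additive, forcing one factor to be homogeneous, and then matching degree-zero components gives $r\,\sigma^d(s_e)=1$ in the domain $R_0=K(\ker\psi)$, whence $r$ is a unit of $R_0$ and (by Kaplansky for the bi-orderable group $\ker\psi$) a trivial one. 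Your route is more self-contained --- it does not invoke Lemma~\ref{ab}, and in fact the same grading argument reproves that lemma for Laurent polynomials --- and it isolates exactly how the coprimality of $(m,n)$ enters, avoiding the order-theoretic bookkeeping about smallest and oldest terms; it also generalizes verbatim to any bi-orderable group with an epimorphism onto $\mathbb Z$ sending $h$ to a generator. Two small points worth making explicit: the passage from a one-sided inverse to a two-sided one in $R_0$ does require the domain hypothesis (from $ru=1$ one gets $(ur-1)u=0$, so $ur=1$), and ``after possibly swapping'' should not be read as literal commutation of the noncommutative product --- the case $\mathrm{sp}(g)=0$ needs its own (genuinely symmetric) computation, which you acknowledge and which indeed works.
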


\begin{proof}  Suppose $x_1^mx_2^n$ is not a proper power in the quotient $F/[F,F]$. Replacing $x_i$ by $x_i^{-1}$ if necessary, we can assume $h>1$ and $m,n>0.$ Suppose $1-h=f(x_1,x_2)g(x_1,x_2)$, and $f,g$ are not units. We can assume that 1 is the smallest term of $f(x_1,x_2)$ and $g(x_1,x_2)$. By Lemma \ref{ab}  the image of one of $f(x_1,x_2)$ or $g(x_1,x_2)$, wlog $f(x_1,x_2)$ must be equal $1$ in the abelianization.  Then the oldest term of $g(x_1,x_2)$ must be $x_1^mx_2^nz_1,$ where $z_1\in [F,F].$ Therefore the oldest term of $f(x_1,x_2)$ must belong to $[F,F]$ because otherwise the oldest term of $1-h$ would be larger than $h$. Therefore all the monomials in $f(x_1,x_2)$ must be in $[F,F]$ because  monomials not in $[F,F]$ which are greater than 1  are larger than monomials in $[F,F]$. Indeed,  such monomials not in $[F,F]$  have a positive coefficient of $y_1$ or $y_2$ when embedded into formal power series and elements in $[F,F]$ have these coefficients zero.  Then $g(x_1,x_2)=\sum f_{ij}x_1^ix_2^j,$ where for each $i,j$, $f_{ij}\in K([F,F])$.  Then $f(x_1,x_2)f_{00}=1$, that  contradicts the assumption that $f(x_1,x_2)$ is not a unit.
 \end{proof}

\begin{lemma} \label{le:uni}  Let $a_m=x_1x_2^2x_1^2x_2^2x_1^3\ldots x_1^mx_2^m$, $m>1$ and $(1-a),(1-b),(1-c)$ are irreducible in $K(G)$.   Let $C_i=(1-c)(1-a)^{1+i} (1-b).$ 
Then   $w=C_1(1-a_2)C_2(1-a_3)A_3\ldots C_{m-1}(1-a_m)C_m$
 is a rigid element.
\end{lemma}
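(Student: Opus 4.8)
The plan is to deduce rigidity of $w$ from Lemma \ref{un}. Since $K(F)$ is an atomic integral domain (Lemma \ref{le:co}), $w$ has an atomic factorization, and by Lemma \ref{un} it suffices to exhibit \emph{one} factorization $w = p_1\cdots p_N$ into atoms in which every consecutive pair $p_i,p_{i+1}$ generates a proper ideal. The atoms will be precisely the building blocks already visible in the definitions of the $C_i$ and of $w$: the given irreducibles $1-a$, $1-b$, $1-c$, together with the syllables $1-a_j$.

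The first step is to check that each $1-a_j$ (for $2\le j\le m$) is itself an atom. Write $a_j=x_1^{p_j}x_2^{q_j}z_j$ with $z_j\in[F,F]$, where $p_j,q_j$ are the total $x_1$- and $x_2$-exponents of $a_j$. From the shape of $a_j$ (the increasing blocks $x_1^kx_2^k$) one reads off that $p_j$ and $q_j$ are consecutive integers, hence coprime, so $x_1^{p_j}x_2^{q_j}$ is not a proper power in $F/[F,F]$; Lemma \ref{irr} then gives that $1-a_j$ is irreducible in $K(F)$. Consequently, expanding $C_i=(1-c)(1-a)^{1+i}(1-b)$ turns
$$w=C_1(1-a_2)C_2(1-a_3)C_3\cdots(1-a_m)C_m$$
into an explicit product of atoms, and the key point is that \emph{every} atom occurring is of the form $1-g$ for a group element $g\in F$ (namely $g\in\{a,b,c,a_2,\dots,a_m\}$).

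The final step is a single augmentation argument. Let $\varepsilon\colon K(F)\to K$ be the augmentation homomorphism and $\Delta=\ker\varepsilon$, a proper two-sided ideal. Since $\varepsilon(1-g)=1-1=0$, every atom in the factorization lies in $\Delta$, so for any consecutive pair one has $p_iK(F)+p_{i+1}K(F)\subseteq\Delta\subsetneq K(F)$ (and likewise the two-sided ideal they generate is contained in $\Delta$), i.e. $p_i$ and $p_{i+1}$ generate a proper ideal. By Lemma \ref{un}, $w$ is rigid.

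I expect the only non-routine point to be the irreducibility of the $1-a_j$: one must verify that the abelianization of $a_j$ is not a proper power in $F/[F,F]$ in order to invoke Lemma \ref{irr}, and this is exactly what the increasing-exponent pattern in $a_j$ is engineered to guarantee (it forces coprime total exponents). Everything after that is bookkeeping together with the observation that all the building blocks lie in the augmentation ideal, which is what makes every consecutive pair of atoms fail to be comaximal; so the particular order in which the $C_i$'s and $(1-a_j)$'s are arranged is irrelevant for rigidity itself (it matters only for the later use of $w$ as an encoding device).
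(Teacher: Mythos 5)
Your proof is correct and follows the same overall strategy as the paper: factor $w$ explicitly into the atoms $1-a$, $1-b$, $1-c$, $1-a_j$ (the first three are atoms by hypothesis, the $1-a_j$ via Lemma~\ref{irr}), then show consecutive atoms generate a proper ideal so Lemma~\ref{un} applies. Where you diverge is in how you see that the ideal is proper. The paper embeds $K(F)$ into the formal power series ring $K\langle\langle Y\rangle\rangle$ via $x_i\mapsto 1+y_i$, observes that the images of all the $1-g$ have zero constant term and hence lie in the (unique) maximal ideal of the power series ring, and pulls this back. You instead observe directly that every $1-g$ with $g\in F$ lies in the augmentation ideal $\Delta=\ker\varepsilon$, which is a proper two-sided ideal of $K(F)$. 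These are logically the same fact — the constant term of the power series image is exactly $\varepsilon(f)$, so the power series maximal ideal pulls back to $\Delta$ — but your phrasing avoids invoking the embedding and the structure of $K\langle\langle Y\rangle\rangle$ entirely and is the more elementary route. Your side remark that the ordering of the syllables is irrelevant for rigidity (it matters only for the later encoding) is also a correct observation. On the one computational point: your claim that the total $x_1$- and $x_2$-exponents $p_j,q_j$ of $a_j$ are consecutive does check out, since the first syllable is $x_1x_2^2$ (not $x_1x_2$) and all the remaining syllables $x_1^kx_2^k$ contribute equally to both exponents, giving $q_j=p_j+1$; this is exactly what is needed to invoke Lemma~\ref{irr}.
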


\begin{proof} The algebra $K(F)$ is embedded into the ring of formal power series $K\langle\langle Y\rangle\rangle$ (\cite{Cohn3}, Ex. 7.11.9). An embedding is given by mapping each $x_i$ into $1+y_i$. Notice that  the images of $(1-a)$, $(1-a_m)$,  $(1-b), (1-c)$ in $K\langle\langle Y\rangle\rangle$ do not contain a constant term, and therefore, are non units in the ring of formal power series. Also  by Lemma \ref{irr} elements $(1-a_m)$ are atoms in $K(F)$,  Non-units in the ring of formal power series (which is a rigid unique factorization domain) generate the maximal  proper ideal. That ideal does not contain the identity element. Therefore elements $(1-a), (1-b), (1-c), (1-a_m)$ generate a proper ideal in  $K(F)$. Therefore $w$ is a rigid element in $K(F)$ by Lemma \ref{un}.
\end{proof}

\begin{lemma} \label{10}  Let $a,b,c\in F$ such that  $a,b,c >1$, $ab\neq ba, ac\neq ca$ and $(1-a),(1-b),(1-c)$ are irreducible in $K(G)$. If for some numbers $m,n$ and invertible elements $g,h$
there is an equality $g(1-c)^n(1-a)^m(1-b)^n=(1-c)^n(1-a)^m(1-b)^nh$, then $g=h\in K$.
\end{lemma}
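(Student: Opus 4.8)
The plan is to reduce everything to a statement about the free group $F$ and then combine unique factorization in $K(F)$ with a two‑sided order on $F$; throughout I take $m,n\ge 1$, which is the range in which the lemma is used. Since a free group is left orderable it satisfies Kaplansky's unit conjecture, so by Lemma \ref{le:units} the units $g,h$ are monomials: $g=\alpha f$, $h=\beta e$ with $\alpha,\beta\in K\smallsetminus\{0\}$ and $f,e\in F$. Fix a bi-invariant ordering of $F$ (e.g. Bergman's) for which $a,b,c>1$. For $0\ne v\in K(F)$ let $\mathrm{st}(v)$, $\mathrm{lt}(v)$ be the $\le$-minimal and $\le$-maximal monomials occurring in $v$; because the order is two-sided there is no cancellation in the extreme terms of a product, so $\mathrm{st}(uv)=\mathrm{st}(u)\mathrm{st}(v)$ and $\mathrm{lt}(uv)=\mathrm{lt}(u)\mathrm{lt}(v)$. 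Put $C=(1-c)^n(1-a)^m(1-b)^n$. Since $x>1$ gives $\mathrm{st}(1-x)=1$, we get $\mathrm{st}(C)=1$, so applying $\mathrm{st}$ to $gC=Ch$ yields $g=\mathrm{st}(g)=\mathrm{st}(gC)=\mathrm{st}(Ch)=\mathrm{st}(h)=h$. Hence $g=h$, and since $\alpha$ is central, $f$ commutes with $C$, i.e. $f^{-1}Cf=C$.

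Next I rewrite $f^{-1}Cf=C$ as an equality of two complete atomic factorizations. Conjugation by $f$ is an inner automorphism of $F$, hence extends to a $K$-algebra automorphism of $K(F)$ that preserves the order; so $c'=f^{-1}cf$, $a'=f^{-1}af$, $b'=f^{-1}bf$ are again $>1$, and $1-c',1-a',1-b'$ are irreducible (automorphic images of irreducibles). Thus
$$(1-c')^n(1-a')^m(1-b')^n=(1-c)^n(1-a)^m(1-b)^n.$$
I claim $C$, and every tail of the right-hand factorization, is rigid: each factor $1-x$ ($x\in F$) lies in the augmentation ideal $I=\ker\big(\varepsilon\colon K(F)\to K\big)$, a proper two-sided ideal, so any two of the atoms above generate an ideal contained in $I\subsetneq K(F)$, and Lemma \ref{un} applies.

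Now I peel the two factorizations apart one atom at a time. From the definition of rigidity one obtains, by an easy induction on the number of atoms (using that the tails are rigid), that two complete atomic factorizations of a rigid element differ only by unit factors inserted between consecutive atoms: there are units $u_0=1,u_1,\dots,u_{2n+m}=1$ with $1-c'=u_{i-1}(1-c)u_i^{-1}$ for $1\le i\le n$, $1-a'=u_{i-1}(1-a)u_i^{-1}$ for $n<i\le n+m$, and $1-b'=u_{i-1}(1-b)u_i^{-1}$ for $n+m<i\le 2n+m$. Each $u_i$ is a unit, hence a monomial $\mu_i v_i$. At position $1$, $1-c'=(1-c)u_1^{-1}=\mu_1^{-1}(v_1^{-1}-c v_1^{-1})$; the left side has $\mathrm{st}=1$, $\mathrm{lt}=-c'$ (as $c'>1$), the right side has $\mathrm{st}=\mu_1^{-1}v_1^{-1}$, $\mathrm{lt}=-\mu_1^{-1}c v_1^{-1}$ (as $c>1$); matching minimal monomials forces $\mu_1=1$, $v_1=1$, so $u_1=1$, and matching maximal monomials gives $c'=c$. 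Feeding $u_1=1$ into position $2$ repeats the computation, so inductively $u_1=\dots=u_n=1$ and $c'=c$; then positions $n+1,\dots,n+m$ (with $u_n=1$) give the same conclusion with $a$ in place of $c$, and positions $n+m+1,\dots$ give $b'=b$. Hence $f$ commutes with $a$, $b$ and $c$ in $F$.

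Finally, if $f\ne 1$ then $a,c\in C_F(f)$, which is infinite cyclic, so $a$ and $c$ would commute, contradicting $ac\ne ca$. Therefore $f=1$, so $g=\alpha\in K$ and $g=h$, as claimed. The main obstacle is the position-by-position elimination of the connecting units $u_i$ in the third paragraph: it is where one needs the two-sidedness of the order (to match minimal and maximal monomials simultaneously) together with Kaplansky's conjecture (to know each $u_i$ is a monomial), it is the only place the hypotheses $a,b,c>1$ enter, and the non-commutation hypothesis $ac\ne ca$ is used only at the very end.
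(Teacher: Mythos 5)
Your argument is correct, but after the shared opening it takes a genuinely different route from the paper's. Both proofs begin identically: Kaplansky's unit conjecture makes $g,h$ monomials, and comparing the $\le$-minimal terms of $gC$ and $Ch$ (using $a,b,c>1$ so $\mathrm{st}(C)=1$) gives $g=h$, hence $f^{-1}Cf=C$ where $g=\alpha f$. From here the paper finishes in two lines: $C\in C_{K(F)}(g)$, which by Lemma~\ref{pr:centralizer-hyp} is (for $g\in F\smallsetminus\{1\}$) the group algebra of a cyclic group, and $C=(1-c)^n(1-a)^m(1-b)^n$ cannot lie in such a commutative group algebra because its support involves the non-commuting $a,b,c$; contradiction. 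You instead deploy the unique factorization machinery of Section~4: rigidity of $C$ and of its tails (via Lemma~\ref{un} and the observation that every atom lies in the augmentation ideal) forces the two atomic factorizations $(1-c')^n(1-a')^m(1-b')^n=(1-c)^n(1-a)^m(1-b)^n$ to match atom-by-atom up to connecting units, and a simultaneous $\mathrm{st}$/$\mathrm{lt}$ comparison under a bi-invariant order kills those units, giving $c'=c$, $a'=a$, $b'=b$, i.e.\ $f$ centralizes each of $a,b,c$ individually; then the cyclic-centralizer fact is applied to $C_F(f)$ rather than to $C_{K(F)}(g)$. Your route is longer and leans more heavily on the UFD apparatus, but it is fully explicit about why the commutation is impossible, whereas the paper's ``$C$ does not belong to a group algebra of a cyclic group because $a,b,c$ do not commute'' leaves the support analysis to the reader; the trade-off is that your proof also needs the finer fact that two complete factorizations of a rigid element agree up to interleaved units, which the paper avoids.
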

\begin{proof} Since $g$ is invertible $g\in KF$. Suppose $g\not\in K,$  WLOG we can assume that $g\in F$. Since $a,b,c >1$, the smallest monomial in the left side of the equality is $g$ and in the right side is $h$, therefore $g=h$. Then $(1-c)^n(1-a)^m(1-b)^n$ must be in the centralizer of $g$ in $K(F)$. By Lemma \ref{pr:centralizer-hyp}, the centralizer of an element in $F$ in $K(F)$ is isomorphic to the group algebra of a cyclic group. But $(1-c)^n(1-a)^m(1-b)^n$ does not belong to a group algebra of a cyclic group because $a,b,c$ do not commute. Therefore  $(1-c)^n(1-a)^m(1-b)^n$ does not commute with $g$, contradicting the assumption $g\not\in K$. \end{proof}

\begin{lemma} \label{le:uni1}  Let $a_m=x_1x_2^2x_1^2x_2^2x_1^3\ldots x_1^mx_2^m$, $b_m=x_1^mx_2^{m+1}, c_m=x_1^{m+1}x_2^{m+2}$.
  Let $A_{m,i}=(1-c_m)(1-a_m)^{1+i} (1-b_m).$ 
Let $x_{i1}^{e_1}\ldots x_{im}^{e_m}$, where $e_1,\ldots ,e_m\in \{\pm 1\}$ be a monomial in $F$. Then   $w=A_{m,0}(1-x_{i1}^{e_1})A_{m,1}(1-x_{i1}^{e_1}x_{i2}^{e_2})A_{m,2}\ldots A_{m,m-1}(1-x_{i1}^{e_1}\ldots x_{im}^{e_m})A_{m,m}$
uniquely defines the monomial $x_{i1}^{e_1}\ldots x_{im}^{e_m}.$ 

Moreover, if $p_1,\ldots, p_k\in K(F)$  such that all the monomials have length less than $m/2$, then for any $q_1,\ldots ,q_k\in K(F)$
$$A_{m,0}p_1A_{m,1}p_2A_{m,2}\ldots A_{m,k-1}p_kA_{m,k}=A_{m,0}q_1A_{m,1}q_2A_{m,2}\ldots A_{m,k-1}q_kA_{m,k}$$ implies that for any $i,$
$p_i=\delta _i q_i$, where $\delta _i\in K$ and $\delta _1\ldots \delta _k=1$.
\end{lemma}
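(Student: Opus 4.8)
The plan is to use the rigidity machinery from Lemmas~\ref{un}, \ref{le:uni}, and \ref{10} together with the comaximal‑transposition control of Lemma~\ref{le:comax} to pin down both the monomial and the factors $p_i$. First I would record the structural properties of the blocks $A_{m,i} = (1-c_m)(1-a_m)^{1+i}(1-b_m)$: by Lemma~\ref{irr} the elements $1-a_m$, $1-b_m$, $1-c_m$ are atoms in $K(F)$ (since $a_m$, $b_m$, $c_m$ are not proper powers in $F/[F,F]$: note $b_m = x_1^m x_2^{m+1}$ and $c_m = x_1^{m+1}x_2^{m+2}$ have coprime exponents, and $a_m$ has $x_1$-exponent $1+2+\dots+m$ and $x_2$-exponent $2+2+\dots$, which one checks are coprime for the relevant $m$), and all of them lie in the augmentation ideal, hence any two consecutive atoms in the full factorization of $A_{m,i}$ generate a proper ideal; by Lemma~\ref{un} each $A_{m,i}$ is rigid, and more importantly, stringing the $A_{m,i}$'s together with any intervening factors $1-x_{ij}^{e_j}$ (whose image in the power series ring has no constant term) still has every consecutive pair of atoms generating a proper ideal, so $w$ is rigid by Lemma~\ref{un}. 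The differing index $1+i$ in the exponent of $(1-a_m)$ across the blocks $A_{m,0},\dots,A_{m,m}$ is the key device: it makes the blocks pairwise ``distinguishable'' inside a rigid factorization, so that in any rearrangement coming from comaximal transpositions the block $A_{m,i}$ must stay in position $i$.

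For the first assertion, suppose $w$ also equals the analogous expression built from a monomial $x_{j1}^{d_1}\ldots x_{jm}^{d_m}$. Since $w$ is rigid, Lemma~\ref{le:comax} says the two atomic factorizations differ by comaximal transpositions; using that the $A_{m,i}$ occur with distinct exponents $1+i$ on $(1-a_m)$ and that $1-a_m,1-b_m,1-c_m$ are not similar to any $1-x_t^{\pm1}$ (compare leading terms in Bergman's order, or images in the abelianization), one shows the positions of the blocks $A_{m,i}$ are forced, hence the intervening segments must match: the segment between $A_{m,\ell-1}$ and $A_{m,\ell}$ on one side is $1-x_{i1}^{e_1}\ldots x_{i\ell}^{e_\ell}$ and on the other $1-x_{j1}^{d_1}\ldots x_{j\ell}^{d_\ell}$, and equality of group algebra elements forces equality of the monomials for every $\ell$, hence $x_{i1}^{e_1}\ldots x_{i\ell}^{e_\ell} = x_{j1}^{d_1}\ldots x_{j\ell}^{d_\ell}$ for all $\ell$ and so the monomials coincide.

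For the ``moreover'' part, assume $A_{m,0}p_1A_{m,1}\ldots p_kA_{m,k} = A_{m,0}q_1A_{m,1}\ldots q_kA_{m,k}$ with all monomials in the $p_i$ of length $<m/2$. The point of this length restriction is that when one embeds into $K\langle\langle Y\rangle\rangle$ and writes everything in Bergman's order, the ``old'' (top) parts of the $A_{m,i}$ — which involve $a_m$ of large degree, essentially $\sim m^2$ — dominate any monomial contributed by a $p_i$, so no cancellation or merging between a $p_i$-monomial and an adjacent $A_{m,i}$ can occur; this lets one localize: after cancelling $A_{m,0}$ on the left (legitimate since $K(F)$ is a domain, so left cancellation holds) and $A_{m,k}$ on the right, and then peeling off blocks one at a time, one reduces to equalities of the form $p_i A_{m,i} = q_i A_{m,i}$ up to a unit, hence (again by $K(F)$ being a domain, Lemma~\ref{le:co}) $p_i$ and $q_i$ differ by right multiplication by an element commuting with $A_{m,i}$; by Lemma~\ref{10} (applied with $a=a_m,b=b_m,c=c_m$, which have the required non-commutation and irreducibility) that element is a scalar $\delta_i\in K$. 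Finally, comparing the total expressions — or just tracking the product of all the scalars pulled through — gives $\delta_1\cdots\delta_k = 1$.

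The main obstacle I expect is making the ``no interaction between $p_i$-monomials and the blocks'' step fully rigorous: one has to argue, using the power-series embedding and the explicit growth of the exponents in $a_m$ versus the $<m/2$ length bound, that in the product the contribution of each $p_i$ sits in a ``degree window'' cleanly separated from the windows of the neighbouring $A_{m,i}$'s, so that equality of the two global products really does decompose into the local equalities $p_iA_{m,i} = \delta_i q_i A_{m,i}$. Everything else is an application of the UFD/rigidity package (Lemmas~\ref{le:co}, \ref{le:comax}, \ref{un}, \ref{le:uni}, \ref{10}) already set up in this section.
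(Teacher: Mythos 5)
Your main step — asserting that $w$ is rigid by Lemma~\ref{un} because ``every consecutive pair of atoms generates a proper ideal'' — does not hold, and this is exactly the point where the paper's proof has to work harder. Lemma~\ref{un} requires an \emph{atomic} factorization of $w$ in which consecutive atoms generate a proper ideal. The intervening factors $1-x_{i_1}^{e_1}\cdots x_{i_k}^{e_k}$ need not be atoms and, crucially, need not factor into atoms that are all non-units in $K\langle\langle Y\rangle\rangle$. For instance $1-x_1^2=(1-x_1)(1+x_1)$ (and $x_1^2$ is a perfectly legal reduced prefix), and $1+x_1\mapsto 2+y_1$ \emph{is} a unit in the power series ring; so the pair $(1+x_1)$, $(1-c_m)$ does not obviously generate a proper ideal, and the hypothesis of Lemma~\ref{un} fails. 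The paper deals with precisely this case: after reducing to a potential comaximal transposition $(1+\alpha_1g_1+\cdots+\alpha_sg_s)(1-b_m)=(1+\gamma_1h_1+\cdots)(1+\beta_1f_1+\cdots)$, it first uses the proper-ideal obstruction to conclude the left factor \emph{must} be a power-series unit ($1+\sum\alpha_i\neq0$), and then rules out that remaining case by the substitutions $x_1=z^{-m-1},x_2=z^m$ followed by $x_1=z^{-1},x_2=z$, splitting into the two subcases a) and b). Your sketch has no mechanism to exclude the power-series-unit atoms, so the claim that the blocks $A_{m,i}$ are locked in place is not established.

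The ``moreover'' part has a second, independent gap. You propose to cancel $A_{m,0}$, then ``peel off blocks one at a time'' to reduce to local equalities $p_iA_{m,i}=\delta_iq_iA_{m,i}$ via a ``degree window'' argument in the power-series embedding. But after cancelling $A_{m,0}$ you face $p_1A_{m,1}\cdots p_kA_{m,k}=q_1A_{m,1}\cdots q_kA_{m,k}$ with $p_1\neq q_1$ a priori, so there is nothing obvious to cancel next, and the window-separation heuristic is not developed into a proof. The paper instead runs the \emph{same} comaximal-transposition argument as in the first part; the role of the $<m/2$ bound is to guarantee that after left-normalizing $p_i$ to the form $1+\alpha_1g_1+\cdots+\alpha_sg_s$ (dividing by its $\leq 1$-smallest monomial), the top monomial $g_s$ has length $<m$ and hence $g_s<a_m,b_m,c_m$ in Bergman's order, so the same obstruction to a transposition with $1-b_m$ applies. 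That gives $p_i$ and $q_i$ proportional by a scalar; comparing overall factors gives $\delta_1\cdots\delta_k=1$. Your appeal to Lemma~\ref{10} at this point is also not the right tool: Lemma~\ref{10} concerns conjugating a product of the specific atoms by a group element, not commuting an arbitrary ring element past $A_{m,i}$.

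In short, the proposal correctly identifies the relevant toolbox (Lemmas~\ref{le:comax}, \ref{un}, \ref{irr}), but omits the essential case analysis on whether the atom in play is a unit in the power series ring, and the substitution trick that disposes of that case; this is not a cosmetic omission but the heart of the argument.
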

\begin{proof}   We will prove the first statement. Notice that by Lemma \ref{irr}, $(1-a_m), (1-b_m), (1-c_m)$ are irreducible in $K(F)$ and $a_m, b_m, c_m$ are larger that all the monomials 
$x_{i1}^{e_1}\ldots x_{ik}^{e_k}, k\leq m.$ 

Assume that $w=A_0(1-x_{i1}^{e_1})A_1(1-x_{i1}^{e_1}x_{i2}^{e_2})A_2\ldots A_{m-1}(1-x_{i1}^{e_1}\ldots x_{im}^{e_m})A_m$ does not
uniquely define all the monomials $x_{i1}^{e_1}\ldots x_{ik}^{e_k}.$ This means that there is another decomposition of $w$ with different monomials. Then one can pass from one to the other by a series of comaximal transpositions.  An atom in the decomposition 
of $(1-x_{i1}^{e_1}\ldots x_{im}^{e_m})$ after multiplying by a suitable group element on the left will have the form $(1+\alpha_1g_1+\ldots \alpha_sg_s),$  where all $g_i\in F$, $1<g_1<\ldots <g_s\leq x_{i1}^{e_1}\ldots x_{im}^{e_m}$.
% Suppose $1-x_{i1}^{e_1}\ldots x_{ik}^{e_k}$, $k\leq m$ is decomposed into the product of atoms, and $1+\alpha_1g_1+\ldots \alpha_sg_s$ is one of the atoms, where $a_1,\ldots ,a_s\in K$.
%Multiplying by a  group element, say $g$, we can assume that  $1<g_1< \ldots <g_s$. By construcion of $a_m$, $g_s<a_m$. 
One should have a comaximal  atomic transposition of  such an atom $(1+\alpha_1g_1+\ldots \alpha_sg_s)$, with the atom $(1-b_m)=(1-x_1^mx_2^{m+1}):$

\begin{equation}\label{co}
(1+\alpha_1g_1+\ldots \alpha_sg_s)(1-x_1^mx_2^{m+1})=(1+\gamma _1h_1+\ldots +\gamma _kh_k)(1+\beta_1f_1+\ldots \beta_tf_t),\end{equation}
where $h_j,f_j\in F$ for all $j$.

If $(1+\alpha_1g_1+\ldots \alpha_sg_s)$ is not a unit in the ring of formal power series, then $(1+\alpha_1g_1+\ldots \alpha_sg_s)$ and $(1-x_1^mx_2^{m+1})$  generate a proper ideal in the ring of formal power series and, therefore, in $K(F)$ and such a  comaximal transposition is impossible by Lemma \ref{le:comax}. Therefore $(1+\alpha_1g_1+\ldots \alpha_sg_s)$ is a unit in the ring of formal power series (that is $1+\alpha _1+\ldots +\alpha _s\neq 0$).
We now substitute $x_1=z^{-m-1}, x_2=z^m$ into this relation, then the left side becomes zero. This substitution sends one of the elements $1+\gamma _1h_1+\ldots +\gamma _kh_k$ or $1+\beta_1f_1+\ldots \beta_tf_t$ to zero. Suppose  $1+\gamma _1h_1+\ldots +\gamma _kh_k$ becomes zero. Then for each monomial $x_1^ix_2^j$ in the abelianization of $1+\gamma _1h_1+\ldots +\gamma _kh_k$ there should be a monomial $x_1^ix_2^jx_1^mx_2^{m+1}.$ There are only two possible  cases:

a) $1+\gamma _1h_1+\ldots +\gamma _kh_k=u_1+u_2$, where the largest monomial  in $u_1$ is less than $x_1^mx_2^{m+1}$ and   the smallest monomial  in $u_2$ is greater or equal than $x_1^mx_2^{m+1}$. Then $x_1^mx_2^{m+1}(1+\alpha_1g_1+\ldots \alpha_sg_s)=-u_2(1+\beta_1f_1+\ldots \beta_tf_t)$ in $K(F)$, and since $1+\alpha_1g_1+\ldots \alpha_sg_s$ is an atom, $u_2$ must be a monomial, $u_2= -x_1^mx_2^{m+1}$. Similarly $u_1$ must be a  monomial, $u_1=1$. 
Then $1+\alpha_1g_1+\ldots \alpha_sg_s=1+\beta_1f_1+\ldots \beta_tf_t,$  and this element must commute with $1-x_1^mx_2^{m+1}$. This is impossible 
because the centralizer of $x_1^mx_2^{m+1}$ is the group algebra of the cyclic group generated by $x_1^mx_2^{m+1}$.

b) All the monomials in $1+\gamma _1h_1+\ldots +\gamma _kh_k$ are in $[F,F]$. Since $1+\gamma _1h_1+\ldots +\gamma _kh_k$ becomes zero when we substitute 
$x_1=z^{-m-1}, x_2=z^{m}$, we have $1+\gamma _1+\ldots +\gamma _k=0$. Therefore  $1+\gamma _1h_1+\ldots +\gamma _kh_k$ also becomes zero when we substitute $x_1=z^{-1}, x_2=z$, .   Since $1+\alpha_1+\ldots +\alpha_s\neq 0$,  $(1+\alpha_1g_1+\ldots \alpha_sg_s)$  does not become zero after this substitution. Therefore the right side of equation (\ref{co}) becomes zero and the left side does not,  this is a contradiction.

The proof of the second statement is similar, we show that we  cannot make a comaximal transposition (\ref{co}) in the decomposition $$A_{m,0}p_1A_{m,1}p_2A_{m,2}\ldots A_{m,k-1}p_kA_{m,k}.$$ We just have to notice that when we multiply any $p_i$ by a monomial $g$ to obtain an element in the form 
$(1+\alpha_1g_1+\ldots \alpha_sg_s)$, where all $g_i\in F$, $1<g_1<\ldots <g_s$, then $g_s$ is a product of the maximal monomial in $p_i$ and $g^{-1}$,  where $g$ is the minimal monomial in $p_i$ that is less than or equal to 1. The length of $g^{-1}$  the maximal monomial in $p_i$ is less than $m/2$, therefore $g_2$ has length less than $m$.  Since $x_1<\ldots < x_n$,  $g_s< a_m,b_m,c_m.$
\end{proof}

\begin{lemma} \label{le:limit} Let $G$ be a limit group, $F\leq G$, $A_0,\ldots ,A_m$ the same  as in Lemma \ref{le:uni}. 
%Let $z_{i_1}^{e_1}\ldots z_{i_m}^{e_m}$ be a monomial.
%Let   $w=(1-c)^m(1-a)^{p} (1-b)^m(1-z_{i_1}^{e_1})(1-c)^m(1-a)^{p+1}(1-b)^m(1-z_{i_1}^{e_1}z_{i_2}^{e_2})(1-c)^m(1-a)^{p+2}\ldots (1-c)^m(1-a)^{p+m-1} (1+b)^m(1-z_{i_1}^{e_1}z_{i_2}^{e_2}\ldots z_{i_m}^{e_m})(1-c)^m(1-a)^{p+m}.$
Then the equality  $w=A_{m,0}(1-v_1)A_{m,1}(1-v_2)A_{m,2}\ldots (1-v_m)A_{m,m}$, where the length of each $v_i\in G$ is less or equal than $m$,  uniquely defines elements $v_1,\ldots,v_m.$

\end{lemma}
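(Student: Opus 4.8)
The statement is a uniqueness claim: I must show that if
\[
A_{m,0}(1-v_1)A_{m,1}(1-v_2)A_{m,2}\cdots(1-v_m)A_{m,m}=A_{m,0}(1-u_1)A_{m,1}(1-u_2)A_{m,2}\cdots(1-u_m)A_{m,m}
\]
with $v_i,u_i\in G$ of length $\leq m$ for all $i$, then $v_i=u_i$ for every $i$. My plan is to deduce this from the free-group version, Lemma \ref{le:uni1}, using that a limit group is fully residually free.

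Given such an equality, I would let $T$ be the finite subset of $G$ consisting of the group elements occurring in the supports of the two fully expanded sides, together with the $v_i$, the $u_i$, and the elements $x_1,x_2$ and the powers of $x_1,x_2$ entering $a_m,b_m,c_m$. By full residual freeness choose a homomorphism $\phi\colon G\to F_0$ onto a finitely generated free group that is injective on $T$ and generic enough (see below); extending $\phi$ $K$-linearly to a $K$-algebra map $K(G)\to K(F_0)$ and applying it to the equality gives the corresponding equality in $K(F_0)$ with $a_m,b_m,c_m,v_i,u_i$ replaced by their $\phi$-images. Provided $\phi(a_m),\phi(b_m),\phi(c_m)$ still play the role of $a_m,b_m,c_m$ there, Lemma \ref{le:uni1} forces $\phi(v_i)=\phi(u_i)$ for all $i$, and since $\phi$ is injective on $T\ni v_i,u_i$ this yields $v_i=u_i$. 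One input transfers for free: by Lemma \ref{pr:centralizer-hyp} the centralizer in $K(F_0)$ of $\phi(b_m)$, resp.\ $\phi(c_m)$, is the group algebra of the infinite cyclic group it generates, exactly as needed in Lemma \ref{le:uni1}.

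The substance of the proof, and the step I expect to be the main obstacle, is arranging that the special shape of $a_m,b_m,c_m$ survives passage to the free quotient $F_0$, since that shape drives every part of the argument of Lemma \ref{le:uni1}. Concretely one needs: that $\phi(a_m),\phi(b_m),\phi(c_m)$ are not proper powers in $F_0$ and have primitive abelian images, so that $(1-\phi(a_m)),(1-\phi(b_m)),(1-\phi(c_m))$ are atoms of $K(F_0)$ (a suitable form of Lemma \ref{irr}); that they still dominate, in a bi-invariant order on $F_0$, all the monomials that can occur as $\phi(v_i)$ or $\phi(u_i)$, which requires $\phi$ to have roughly uniform expansion on the finitely many elements of $T$; and that the two homomorphisms onto $\Z$ used in Lemma \ref{le:uni1} to annihilate $1-b_m$ and $1-c_m$ have analogues through $\phi$ — it suffices that $F_0$ admit homomorphisms onto torsion-free abelian groups killing $\phi(b_m)$, resp.\ $\phi(c_m)$, which always holds since $F_0$ is free and $b_m,c_m$ have coprime abelian exponents, provided $\phi$ is generic on $\langle x_1,x_2\rangle$. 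For a fixed $m$ all of these are finitely many genericity conditions on $\phi$, so a sufficiently generic discriminating homomorphism works.

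An alternative route, which sidesteps the need to reconcile orders under $\phi$, is to re-run the proof of Lemma \ref{le:uni1} directly in $K(G)$: this uses that $K(G)$ has no zero divisors and only trivial units (limit groups are bi-orderable, hence have the unique product property), that it carries a leading-term order coming from a bi-order of $G$ playing the role of Bergman's order, that the comaximal-transposition machinery of Lemmas \ref{le:comax} and \ref{un} transfers via the embedding of $K(G)$ into the group algebra of a non-standard free group, and again Lemma \ref{pr:centralizer-hyp} for the centralizer steps. In this route the step still requiring care is the existence of the substitution homomorphisms: one needs $G$ to have a quotient onto a torsion-free abelian group killing $b_m$, resp.\ $c_m$, which amounts to $\bar x_1,\bar x_2$ spanning a free-abelian direct summand of $G^{ab}$ — arranged by a generic choice of $x_1,x_2$ inside the given free subgroup $F$, or by first passing to a free quotient of $G$ faithful on the finitely many relevant elements.
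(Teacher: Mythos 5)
Your proposal takes a genuinely different route from the paper's, and Route~1 has a gap. The paper rewrites the ring equality $w_v=w_u$ in $K(G)$ as a single quantifier-free group-language formula $\psi(\bar v,\bar w)$ with constants from $F$: a finite disjunction over the possible coincidence patterns among the group elements appearing in the two expanded supports, retaining only those patterns whose attached coefficient sums vanish. The uniqueness claim is then the universal sentence $\forall \bar v,\bar w\,(\psi\Rightarrow \bigwedge_i v_i=w_i)$, which is true in $F$ by Lemma \ref{le:uni1}, and it carries over to the limit group $G\geq F$ because $G$ satisfies the universal theory of $F$ with constants in $F$. Crucially, $a_m, b_m, c_m$ are never pushed forward anywhere; all of the delicate ring-theoretic work of Lemma \ref{le:uni1} stays inside $K(F)$, and only the abstract universal-theory transfer is used at the end.

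Your Route~1 instead pushes the ring equality forward along a discriminating $\phi:G\to F_0$ and then tries to invoke Lemma \ref{le:uni1} in $K(F_0)$. The obstacle you yourself flag is the gap, not a finishing detail. Lemma \ref{le:uni1} is proved for $a_m, b_m, c_m$ built from a free basis $x_1,x_2$ of the ambient free group, and after applying $\phi$ these become words in $\phi(x_1),\phi(x_2)\in F_0$, which need not be part of a basis of $F_0$. Their abelian images may be parallel in $F_0^{ab}$ (so Lemma \ref{irr} no longer gives irreducibility of $1-\phi(b_m)$), they may fail to dominate $\phi(v_i)$ in Bergman's order on $F_0$, and the two surjections onto $\Z$ annihilating $1-\phi(b_m)$, resp.\ $1-\phi(c_m)$, but not $1-\phi(a_m)$ need to be constructed, not merely postulated. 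Listing these as "genericity conditions" and asserting a sufficiently generic $\phi$ satisfies them all is precisely the content you would have to supply — in effect, re-proving a strengthened Lemma \ref{le:uni1} for non-basis pairs — and the paper avoids this entirely by never moving $a_m,b_m,c_m$. Route~2 is in worse shape: Cohn's comaximal-transposition machinery (Lemmas \ref{le:comax}, \ref{un}) and the power-series embedding used in Lemma \ref{le:uni} are developed for $K(F)$ as a free ideal ring, and it is not established anywhere (in the paper or in your sketch) that $K(G)$ for a non-free limit group, or the group algebra of a non-standard free group, is a free ideal ring — the FIR property is not first-order and does not pass to ultrapowers automatically — so "transfers via the embedding into the group algebra of a non-standard free group" is another unjustified leap.
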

\begin{proof}   The equality $ A_{m,0}(1-v_1)A_{m,1}\ldots (1-v_m)A_{m,m}-A_{m,0}(1-w_1)A_{m,1}\ldots (1-w_m)A_{m,m}=0$ in $K(F)$  (in $K(G)$) is equivalent to the fact that some monomials are the same and the sum of the coefficients of the same monomials is zero. Since we do not know $w_1,\ldots ,w_m$ there are different possible systems of equalities but there is a finite number of possible systems. Therefore this equality in $K(F)$ (resp. in $K(G)$) is equivalent to the disjunction of  systems of equalities in $F$  (resp. in $G$). Denote this disjunction 
by $\psi (v_1,w_1,\ldots ,v_m,w_m).$    The formula $$\forall v_1,\ldots ,v_m,w_1,\ldots , w_m (\psi (v_1,w_1,\ldots ,v_m,w_m)\implies v_i=w_i)$$ in the group language is true in $F$.
But  the universal theories of $F$ and $G$ (with constants in $F$) are the same, see, for example, \cite{ICM}. Therefore this formula is also true in $G$.
\end{proof}
\begin{lemma} \label{le:a-m}
The set of pairs  $B = \{(a_m,m) \mid  m \in \N_P\}$ is definable in $K(F)$ uniformly in $K$ and $x_1, x_2 \in X$ such that $x_1 \neq x_2$.
\end{lemma}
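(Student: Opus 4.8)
The plan is to express the value $a_m$ via its defining recursion, using the interpreted arithmetic $\N_P$ together with a definable coding of finite sequences of elements of $K(F)$ by single elements of $K(F)$. Recall $a_m$ is built by a simple recursion $a_0=1$, $a_{k+1}=a_k\cdot u_{k+1}$, where $u_{k+1}\in F$ is a word of length $O(k)$ depending on $k$ by an explicit (hence $\N_P$-definable) rule, while $a_k$ itself has length $O(k^2)$. I first need \emph{definable exponentiation}: the maps $k\mapsto x_j^{\,k}$, $j=1,2$, as functions $\N_P\to F$, are definable in $K(F)$. Indeed, by Lemma \ref{pr:centralizer-hyp} the centralizer $C_{K(F)}(x_j)$ is the Laurent ring $K[x_j,x_j^{-1}]$; inside it the cyclic group $\langle x_j\rangle$ is definable (it is the set of units lying in $F$, and $F$ is Diophantine in $K(F)$ by Lemma \ref{le:units}), and the enumeration of $\langle x_j\rangle$ by $\N_P$ is the one built into the interpretation of $S(K,\N)$ in a Laurent polynomial ring via the sets $N_t=\{t^n\}$ (Lemma \ref{th:HF-group1}, Theorem \ref{th:HF-group}). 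In particular the increments $u_{k+1}$, and the monomials $x_1^{p(n)}x_2^{q(n)}$ used below, are definable functions of their index.

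Next I need a definable way of storing a finite sequence of group elements in one element of $K(F)$, with definable length and component access. One cannot use the templates $A_{n,i}=(1-c_n)(1-a_n)^{1+i}(1-b_n)$ of Lemmas \ref{le:uni1} and \ref{le:limit} for this — their definability is precisely what Lemma \ref{le:a-m} is meant to furnish. Instead I would rerun the comaximal-transposition arguments of Lemmas \ref{un}, \ref{le:comax}, \ref{le:uni1} with \emph{$a$-free} templates $\widetilde A_{n,i}=\gamma_n\,\alpha_n^{\,1+i}\,\beta_n$, where $\alpha_n,\beta_n,\gamma_n$ are of the form $1-x_1^{p}x_2^{q}$ with $\gcd(p,q)=1$ and $p,q=\Theta(n)$: by Lemma \ref{ab} (read in the abelianization as in Lemma \ref{irr}) such elements are irreducible in $K(F)$, they pairwise fail to commute, and any two generate a proper ideal since they are non-units of the power series ring $K\langle\langle Y\rangle\rangle$. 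Hence $\widetilde A_{N,0}(1-v_1)\widetilde A_{N,1}\cdots(1-v_k)\widetilde A_{N,k}$ is rigid by Lemma \ref{un}, and, as in Lemma \ref{le:uni1}, the tuple $(v_1,\dots,v_k)\in F^k$ is recovered uniquely from it whenever the $v_i$ have length $\le\Theta(N)$, with a component-access relation definable in $K(F)$; by the previous paragraph the templates $\widetilde A_{N,i}$ are definable from $N\in\N_P$.

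With this array available, $B$ is defined by the formula
$$\psi(w,m):\ \exists N\,\exists\Gamma\ \bigl(N\ge m^{3}\ \wedge\ \Gamma\ \text{codes a sequence}\ (\sigma_0,\dots,\sigma_m)\ \text{of capacity}\ N\ \wedge\ \sigma_0=1\ \wedge\ \forall\, i<m\ \ \sigma_{i+1}=\sigma_i u_{i+1}\ \wedge\ \sigma_m=w\bigr),$$
expressible in $K(F)$ by the first paragraph and Lemma \ref{Hodges}. Any polynomial bound exceeding $\max_{i\le m}\mathrm{length}(a_i)=O(m^{2})$ works in place of $m^3$ and is computed inside $\N_P$; it guarantees that the true sequence $(a_0,\dots,a_m)$ does admit such a code $\Gamma$, while the recursion clauses force $\sigma_i=a_i$ for all $i\le m$, so $\psi(w,m)$ holds iff $w=a_m$. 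Uniformity in $K$ and in $x_1\ne x_2$ is clear. (Alternatively, granting that $K(F)$ is interpretable in $S(K,\N)$ — as $K(F)^{\diamond}\cong K(F)$, $K(F)$ being constructive over $K$ — Corollary \ref{c2} gives $S(K(F),\N)\to_{int}K(F)$, and then the array of the second paragraph is not needed: $\psi$ may quantify directly over coded sequences of elements of $K(F)$ of arbitrary length.)

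The crux is the second paragraph: building a definable finite-sequence coding over $K(F)$ \emph{without} circularly invoking the $A_{n,i}$, and noticing that because the partial products $a_i$ have superlinear length the storage capacity — hence the index of the rigid template — must be allowed to grow; replacing $A_{n,i}$ by the $a$-free $\widetilde A_{n,i}$ decouples capacity from the sequence being defined and lets the whole recursion be captured by one existential over $(N,\Gamma)$.
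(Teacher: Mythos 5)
Your main line is sound and parallels the paper's, but the worry that drives your second paragraph is partly misplaced, and your closing parenthetical alternative is circular.

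The paper does \emph{not} prove this lemma via the templates $A_{n,i}=(1-c_n)(1-a_n)^{1+i}(1-b_n)$ of Lemma~\ref{le:uni1}; it uses the templates $C_i=(1-c)(1-a)^{1+i}(1-b)$ of Lemma~\ref{le:uni}, where $a,b,c$ are a \emph{fixed} triple of group elements with $(1-a),(1-b),(1-c)$ irreducible (so $C_i$ depends on $i$ only through the exponent, never on $a_m$), and it relies on the rigidity of $w$ (Lemma~\ref{le:uni}) and the ``no nontrivial scalar shift'' property around a $C_j$-block (Lemma~\ref{10}) to recover the interleaved $(1-a_j)$ uniquely, rather than on the length bound of Lemma~\ref{le:uni1}. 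Thus there is no circularity to fix in the first place. Your replacement templates $\widetilde A_{N,i}=\gamma_N\alpha_N^{1+i}\beta_N$ with $\alpha_N,\beta_N,\gamma_N$ of the form $1-x_1^px_2^q$, $\gcd(p,q)=1$, $p,q=\Theta(N)$ do achieve the same $a$-freeness and will work, at the cost of introducing an auxiliary ``capacity'' index $N$ (and the attendant $N\ge m^3$ bookkeeping) that the paper simply does not need because it relies on rigidity rather than the monomial-length hypothesis. So this is a legitimate but slightly heavier variant of the same argument; you also encode the partial products $\sigma_i=a_i$ directly where the paper encodes the affine forms $1-a_j$ and extracts $a_j$ via the uniquely determined scalar $\beta$ — a cosmetic difference.

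The parenthetical alternative at the end does not work as stated. The interpretation $K(F)\to_{int}S(K,\N)$ gives a copy $K(F)^\diamond$, and via Corollary~\ref{c2} one gets $S(K(F),\N)^\diamond$ inside $K(F)$, but the elements of that internal copy are codes, not the actual ring elements $w\in K(F)$. To say ``$\sigma_m=w$'' in your formula $\psi$ you need a \emph{definable} bijection between codes and actual elements, i.e.\ the definable isomorphism $K(F)\to K(F)^{\diamond\diamond}$ of Lemma~\ref{le:bi-int-iso}. That isomorphism is proved via Lemma~\ref{le:M-X-c}, whose proof invokes Lemma~\ref{le:a-m} explicitly. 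So this route is circular; stick with the template-interleaving argument of your second and third paragraphs.
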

\begin{proof}
By Lemmas \ref{le:uni} and \ref{10}, monomial  $ a_m$ is  completely determined by the number $m$ and the  following conditions that define  the element $w=C_1(1-a_2)C_2(1-a_3)A_3\ldots C_{m-1}(1-a_m)C_m$ :
\begin{itemize}
\item [a)]  $w = C_1(1-a_2)C_{2}v$ for some $v \in K(F)$;
\item [b)] $w = w_1C_{m-1}(1-a_m)C_{m}$ and $w_1 \in K(F)$;
\item [c)] (recursion) for any $i \in \N_P, 0 <  i < m$,  and any $ w_1,w_2,w_3 \in K(F)$,   if $w=w_1C_{i-1}w_2C_{i}w_3$ then $w_3 = (1-\beta ^{-1}(\beta -w_2)x_1^ix_2^i)C_{i+1}v_1$ for some $v_1 \in K(F)$ and unique $\beta \in K,$ such that $(\beta -w_2)$ is invertible. 
\item [d)] (uniqueness) if $w_t = w_1C_{j}w_2 = w_1^\prime C_{j}w_2^\prime$ for some $w_1,w_1^\prime \in K(F)$, $w_2,w_2^\prime \in K(F)$ and  $j <m$ then $w_1 = \alpha w_1^\prime, 
 w_2 = (\alpha)^{-1}w_2^\prime$, $\alpha \in K$.
  \end{itemize}
One can write a formula in $K(F)$ with variables $m\in \N_P\subseteq K(F)$ and $z\in K(F)$ that states that there exists a word $w$ satisfying these four conditions. The formula  is true  for all the pairs $(m,a_m)$ and only for them.
\end{proof}

\section{Bi-interpretability of   $S(K,\N)$ and $K(F)$ and applications}\label{biint}
\subsection{Bi-interpretability}
\begin{definition}\label{bi-in}
Algebraic structures  $\MA$ and $\MB$ are called {\em bi-interpretable} if  the following conditions hold:
\begin{itemize}
\item $\MB$ is interpretable in $\MA$ as $\MB^{\diamond}$, $\MA$ is interpretable in $\MB$ as $\MA^{\diamond},$ which by transitivity  implies that $\MA$ is interpretable in $\MA$, say by $\MA^{\diamond\diamond}$, as well as $\MB$ in $\MB$, say as $\MB^{\diamond\diamond}$.
\item  There is an  isomorphism $\MA \to \MA^{\diamond\diamond}$ which is definable in $\MA$ and there is an isomorphism $\MB \to \MB^{\diamond\diamond}$ definable in $\MB$.
\end{itemize}
\end{definition}
In this section we will prove the following result.
\begin{theorem} \label{th:bi} Let $K$ be an infinite field. The structures $S(K,\N)$ and $K(F)$ are bi-interpretable.\end{theorem}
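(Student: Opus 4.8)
\textbf{Proof plan for Theorem \ref{th:bi}.}

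The plan is to assemble the bi-interpretability from the two directions already developed in the paper. For the direction $S(K,\N)\to_{int}K(F)$, Proposition \ref{th:S(F,N)-non-comm} gives a uniform $0$-interpretation of $S(K,\N)$ in $K(F)$ (since a finitely generated non-abelian free group is LO and non-elementary hyperbolic); concretely, picking a non-invertible polynomial $P\in K(F)$ that is a sum of at least three monomials, the centralizer $C_{K(F)}(g)$ for a suitable $g\in F$ is a ring of Laurent polynomials (Lemma \ref{pr:centralizer-hyp}), and inside it $S(K,\N)_P$ is interpreted as in Theorem \ref{th:HF-group}; the canonical isomorphisms $\nu_{P,Q}$ let us glue these into a $P$-free copy $S(K,\N)^{\diamond}$. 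For the direction $K(F)\to_{int}S(K,\N)$, one mimics the interpretation $K[X]^{\diamond}$ in $S(K,\N)$ from Theorem \ref{th:HF-group}(1): an element of $K(F)$ is a finite $K$-linear combination $\sum\alpha_i g_i$ with $g_i\in F$; since $F$ is a finitely generated free group, each $g_i$ is a reduced word in the basis $X$, hence a finite sequence over a finite alphabet, which is coded in $S(A,\N)$, and the whole ring element is coded as a tuple of (coefficient, group-element-code) pairs. The ring operations reduce to word multiplication with free reduction and collecting of like terms, which are computable and hence $0$-definable in the arithmetic sort; this gives $K(F)^{\diamond}$ in $S(K,\N)$. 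Composing yields $K(F)^{\diamond\diamond}$ in $K(F)$ and $S(K,\N)^{\diamond\diamond}$ in $S(K,\N)$.

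It then remains to produce the two definable isomorphisms. For $S(K,\N)\to S(K,\N)^{\diamond\diamond}$, I would argue exactly as in the polynomial case: the first sort $K$ is interpreted identically on both sides (it is the set of $k$ with $k,k+1$ invertible in $K(F)$), and the arithmetic sort comparison was essentially handled in \cite{assoc}, Lemma 10 together with the definability of the sets $B=\{(a_m,m)\}$ from Lemma \ref{le:a-m} and $R=\{(P^m,Q^m)\}$; combining these one writes a formula asserting that a sequence $s$ (coded on one side as a pair $(\sum\alpha_iP^i,P^m)$) matches the $S(K,\N)^{\diamond\diamond}$-code of the same abstract tuple, using the coordinate predicates $t_P$. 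For $K(F)\to K(F)^{\diamond\diamond}$, the key is to write, inside $K(F)$, a formula $\phi(z,s)$ expressing that $s\in S(A,\N)^{\diamond}$ is a code (a tuple of coefficient/word pairs, reduced and with sorted distinct group parts) and that $z=\sum\alpha_i g_i$ is the element of $K(F)$ it names; here the crucial point is that membership of $g$ (given by its coded word in $X$) as the ``group part'' of a monomial of $z$ must be expressible, and this is where we invoke the machinery around Lemmas \ref{le:uni1} and \ref{le:limit} (and Lemma \ref{le:a-m}) which lets us recognize monomials of bounded length and read off their exponents in $X$ — i.e. the free group $F$ with its word structure relative to the basis $X$ is itself interpreted inside the arithmetic sort, and the evaluation map ``formal tuple $\mapsto$ element of $K(F)$'' is definable.

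The main obstacle is establishing that the evaluation/decoding map $K(F)^{\diamond}\to K(F)$ is definable in $K(F)$ rather than merely that it is an abstract isomorphism. Definability of the interpretation $K(F)^{\diamond}$ only guarantees an isomorphism of structures; what bi-interpretability needs is that the graph $\{(z,s): z=[\![s]\!]\}$ is a definable subset of $K(F)\times S(A,\N)^{\diamond}$. Producing this requires a uniformly definable way of extracting, from an arbitrary $z\in K(F)$, the finite data (the supporting group elements as words in the fixed basis $X$, together with their coefficients) — and conversely of reconstructing $z$ from that data — entirely inside the first-order language of $K(F)$. The tools for this are precisely the rigidity lemmas (Lemmas \ref{le:uni}, \ref{10}, \ref{le:uni1}) that encode bounded-length monomials via rigid products of the $A_{m,i}$'s, together with Lemma \ref{le:a-m} to parametrize these lengths by the interpreted copy of $\N$; once one can, for each $m\in\N_P$, definably list the monomials of length $\le m$ occurring in $z$ with their coefficients, the desired definable isomorphism follows by a standard gluing/compactness-free argument over all $m$. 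I expect the bulk of the technical work (largely deferred to the lemmas already proved) to be in checking that these listings are uniform in $K$ and in the basis $X$, so that the resulting formulas genuinely define the isomorphisms $K(F)\to K(F)^{\diamond\diamond}$ and $S(K,\N)\to S(K,\N)^{\diamond\diamond}$.
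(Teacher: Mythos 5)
Your high-level plan matches the paper's: interpret $S(K,\N)$ in $K(F)$ via Proposition~\ref{th:S(F,N)-non-comm}, interpret $K(F)$ in $S(K,\N)$ by coding $\sum\alpha_iM_i$ as a pair of tuples, and then supply the two definable isomorphisms, with the rigidity machinery (Lemmas~\ref{le:uni}, \ref{10}, \ref{le:uni1}, \ref{le:a-m}) as the main tool. You also correctly single out the definability of the decoding map $K(F)^{\diamond\diamond}\to K(F)$ as the crux. Up to that point the proposal is sound and essentially the paper's route.

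Where it falls short is the concrete mechanism you propose for that crux. You say the isomorphism follows ``once one can, for each $m\in\N_P$, definably list the monomials of length $\le m$ occurring in $z$ with their coefficients.'' That is not what the paper does, and it is not at all clear how to do it directly: given an arbitrary $z\in K(F)$, there is no evident first-order way to ask ``is $M$ a monomial of $z$ with coefficient $\alpha$?'' in the ring language. The paper avoids this by never parsing $z$ at all. Instead, given the code $q=(\overline\alpha,\overline t)$, it builds an \emph{auxiliary} element
$f_q=A_{m,1}h_1A_{m,2}h_2\cdots A_{m,s}h_sA_{m,s+1}$
where $h_1=\alpha_1-\alpha_1M_1$ and $h_{i+1}=h_i+\alpha_{i+1}-\alpha_{i+1}M_{i+1}$, i.e.\ the $h_i$ are telescoping partial sums with $h_s=\alpha_1+\cdots+\alpha_s-f$. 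Lemma~\ref{le:uni1} gives rigidity of this product, so $f_q$ is pinned down by head/recursion/tail/uniqueness conditions i)--iv) (cf.\ Lemma~\ref{le:bi-int-iso}), each recursion step reading off one $(\alpha_{i+1},M_{i+1})$ from $h_{i+1}-h_i$; one then recovers $z=f$ from $h_s$ by a single subtraction. It is precisely this accumulator-inside-a-rigid-frame construction, not a direct ``monomial listing,'' that makes the graph of the evaluation map definable. Your plan would need to be amended to build and certify such an auxiliary element rather than attempting to decompose $z$ itself; you should also note that the paper handles $|X|=1$ separately (via Lemma~\ref{th:HF-group2}), since the rigidity argument requires at least two generators.
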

By Lemma \ref{le:list-superstructure} the list superstructure $S(\Z,\N)$ is 0-interpretable in $\N$.
 Hence $S(\Z,\N)$ is interpretable in  $\N_P$ for any non-invertible polynomial $P \in 
K(F)$, that is the sum of at least three monomials, uniformly in $K,X$ and $P$. This allows us to assume that  the tuples from $S(\Z)$ and operations and predicates from $S(\Z,\N)$ are definable (or interpretable)  in  $\N_P$.

 Consider the following interpretation of the free group $F=F(X)$  in $S(\Z,\N)$.  A reduced monomial $M=x_{i_1}^{e_1}\ldots x_{i_m}^{e_m}\in F$, where $e_j\in\{\pm1\}$, can be uniquely  represented by the tuple of integers $ t=(e_1i_1,\ldots, e_mi_m)$ with the property $e_ji_j\neq -e_{j+1}i_{j+1}.$   Denote by $T$ the set of all such tuples $t= (e_1t_1, \ldots,e_mt_m) \in S(\Z) \leq S(\Z,\N)$. Then   for any $i$ one has $1\leq t_i \leq n.$  The multiplication in $F$  corresponds to the concatenation of the tuples  in  $T$, and removing the maximal cancelable pieces. If tuple $s$ corresponds to $M=x_{i_1}^{e_1}\ldots x_{i_m}^{e_m}$,  then the tuple $s_1$ corresponding to $M^{-1}$ is defined by the conditions $\ell (s)=\ell (s_1)$ and for any $i\leq \ell (s)$, $t(s,i,a)\iff t(s_1, \ell(s)-i, -a)$. Hence the construction above gives an interpretation of $F$  in $S(\Z,\N)$.  Denote the interpretation of $S(\Z,\N)$ in $K(F)$  through $\N_P$  by $S(\Z,\N)_P$,  denote the interpretation of $S(\Z)$ by $T_P$ and the corresponding interpretation of $F$ by $\MM_{X,P}$  (it is uniform in $K, |X|$ and $P$).  A tuple of integers $t\in S(\Z)$
 is represented in $\MM_{X,P}$ as $t^{\diamond}\in T_P$.

\begin{lemma} \label{le:M-X-c}
 The canonical isomorphism $F(X) \to \MM_{X,P}$ defined by the map $M=x_{i_1}^{e_1}\ldots x_{i_m}^{e_m} \to t^{\diamond}\in T_P$ is definable in $K(F)$ uniformly in $K$, $X$, and $P$.
\end{lemma}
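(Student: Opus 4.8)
The plan is to show that the set of pairs $\{(M, t^{\diamond}) : M \in F(X),\ t \text{ the integer tuple coding } M\}$ is definable in $K(F)$ by exhibiting an explicit first-order formula, using the encoding lemmas already established. The key point is that both sides of the desired correspondence are already available inside $K(F)$: the group $F$ sits inside $K(F)$ as a definable subset (units whose predecessor is also a unit, etc., via Lemma \ref{le:units} and the description of units in $K(F)$), and the structure $S(\Z,\N)_P$ — hence the set $T_P$ of admissible integer tuples and the interpretation $\MM_{X,P}$ — is definable using the parameter $P$. So what remains is to \emph{link} a group element $M = x_{i_1}^{e_1}\ldots x_{i_m}^{e_m}\in F$ to the tuple $t = (e_1 i_1,\ldots, e_m i_m)$ that represents it.

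First I would fix the auxiliary "ruler'' element $w = C_1(1-a_2)C_2(1-a_3)A_3\cdots C_{m-1}(1-a_m)C_m$ from Lemma \ref{le:uni} and use Lemma \ref{le:a-m} to definably recover, from the number $m \in \N_P$, the monomial $a_m$ and all the blocks $A_{m,j}$. Then, following Lemma \ref{le:uni1}, given $M\in F$ of length $m$ I would form the element
$$
w_M = A_{m,0}(1-x_{i_1}^{e_1})A_{m,1}(1-x_{i_1}^{e_1}x_{i_2}^{e_2})A_{m,2}\cdots A_{m,m-1}(1-x_{i_1}^{e_1}\cdots x_{i_m}^{e_m})A_{m,m},
$$
which, by Lemma \ref{le:uni1}, is uniquely determined by $M$ and conversely determines $M$ together with all its prefixes. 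The formula defining the graph of $F(X)\to\MM_{X,P}$ then says: given $M\in F$ and a tuple $t\in T_P$ of length $m$, there exists an element $y\in K(F)$ such that $y$ equals the $w_M$-expression built from $M$ (expressed via the recursion/uniqueness conditions of Lemma \ref{le:a-m} read off the prefixes, together with the condition that the last "slot'' element is $1-M$), and simultaneously $y$ is the $w$-expression whose $j$-th inserted factor is $1 - (\text{the monomial coded by the length-}j\text{ prefix of }t)$. The latter uses $t_P(t,j,a)$ to read the entries $e_j i_j$ of $t$ and the already-definable multiplication on $\MM_{X,P}$ to form the prefix monomials; the matching of the two descriptions of $y$, factor by factor, is forced by the uniqueness clause (d) of Lemma \ref{le:uni1} (scalars $\delta_j\in K$ cancel since the $A_{m,j}$ blocks are fixed and the slot factors have the normalized form $1-(\text{monomial})$ with coefficient $-1$).

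The main obstacle I expect is bookkeeping rather than conceptual: I must ensure the formula works \emph{uniformly} in $K$, $X$, and $P$, i.e. that the length parameter $m$, the blocks $A_{m,j}$, the passage between an integer-tuple prefix and the corresponding group monomial inside $\MM_{X,P}$, and the "reading'' predicates $t_P$ are all invoked by formulas not depending on the particular field or the particular $P$ — this is already guaranteed piecewise by Lemmas \ref{le:a-m}, \ref{le:uni1} and the construction of $\MM_{X,P}$, so the work is just to assemble them. A secondary technical point is matching lengths: the group length of $M$ must equal $\ell(t)$ and must not exceed the $m$ used to build $w$; one simply quantifies over a sufficiently large $m$ (e.g. $m \geq \ell(t)$ plus the slack required by the length restriction $m/2$ in Lemma \ref{le:uni1}) and checks that the resulting correspondence is independent of the choice, which again follows from the uniqueness statements. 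Putting these together yields a single formula in the language of rings (with parameter $P$) defining the canonical isomorphism $F(X)\to\MM_{X,P}$, completing the proof.
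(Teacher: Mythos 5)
Your proposal is correct and follows essentially the same route as the paper: use the ``ruler'' element built from the blocks $A_{m,j}$ (Lemma~\ref{le:uni1}), definably recover $a_m$ and the blocks from $m\in\N_P$ via Lemma~\ref{le:a-m}, encode the head/tail/recursion/uniqueness conditions as a first-order formula driven by the entries of the tuple $t$ read through $t_P$, and extract $M$ from the last slot.

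Two small points worth flagging. First, the paper separates out the case $|X|=1$ (handled via Lemma~\ref{th:HF-group2}), since the elements $a_m,b_m,c_m$ and the whole $A_{m,j}$ machinery require at least two generators; your proposal implicitly assumes $|X|\geq 2$ throughout. Second, the phrase ``$y$ equals the $w_M$-expression built from $M$'' is slightly circular as stated, because you do not have independent first-order access to the reduced word structure of $M$; what the paper actually does, and what you also implicitly do, is build the $w$-expression entirely from the tuple $t$ (whose entries are readable via $t_P$), and then the tie to $M$ enters only through the tail condition $w = w_1 A_{m,m-1}(1-M)A_{m,m}$, after which $M$ is recovered by the small formula $\phi_3$. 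Rephrasing your ``simultaneous matching'' in those terms makes the argument non-circular and matches the paper exactly.
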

\begin{proof}
The case $|X| = 1$ follows from Lemma \ref{th:HF-group2}. Assume now that $|X| \geq 2$. 
Below we construct a formula $\Phi(x,y,P)$ of the language $L_X$  such that 
$K(F) \models \Phi(M,t^{\diamond},P)$ if and only if the monomial $M \in F$ corresponds to the tuple $t^{\diamond} \in T_P$.  Here $t^{\diamond}$ is the encoding of the tuple $t\in S(\Z,\N)$ corresponding to the monomial $M$.

Take an arbitrary $t^{\diamond} \in T_P$. Notice that the length function $\ell: T_P \to \N$ that gives the length of the tuple $t^{\diamond}$ is definable in $S(\Z,\N)_P$, as well as in $K(F)$ (with the constant $P$). 
  Hence there is a formula $\phi_1(x,y,P)$ in $L_X$ such that in the notation above $K(F) \models \phi_1(t^{\diamond},m^{\diamond},P)$ if and only if $m = \ell(t)$. Similarly, there exists a formula $\phi_2(x,y,z,P)$ in the language $L_X$ such that $K(F) \models \phi_2(t^{\diamond},i^{\diamond},s^{\diamond},P)$ if and only if $t = (e_1t_1, \ldots,e_mt_m) \in T, i \in \mathbb{N}$, $1 \leq i \leq \ell(t)$, and $s = e_it_i$.

Our goal is to write a formula that is only true for corresponding pairs $(M_t, t^{\diamond}).$  Therefore we have to find a way, given $t^{\diamond}$ (and, therefore, given $t$) to recursively construct corresponding $M_t\in F(X)\subset K(F)$  so that the definition can be presented in the language $L_X$ and does not depend on $\ell(t)$ (contains $\ell(t)$  only as a variable). This is what we are going to do now.

 Let $A_{m,i}$ be defined as in Lemma \ref{le:uni1}.  Now for a tuple $t = (e_1t_1, \ldots,e_mt_m)\in T$ (or, equivalently, $t^{\diamond}$ in $T_P$) we define recursively an element $w_t$ that is as follows 
\begin{multline}\label{eq:w}
w_t=A_{m,0}(1-x_{t_1}^{e_1})A_{m,1}(1-x_{t_1}^{e_1}x_{t_2}^{e_2})\ldots  A_{m,m-1}(1-x_{t_1}^{e_1}\ldots x_{t_m}^{e_m})A_{m,m}.\end{multline}

By Lemma \ref{le:uni1}, the element  $w_t$ is completely determined  by the  tuple $t$ and the  following conditions:
\begin{itemize}
\item [a)] (head) $w_t = A_{m,0}(1-x_{t_1}^{e_1})A_{m,p+1}v$ for some $v \in K(F)$;
\item [b)] (tail) $w_t = w_1A_{m,m-1}(1-x_{t_1}^{e_1}\ldots x_{t_m}^{e_m})A_{m,m}$ where $m = \ell(t)$ and $w_1 \in K(F)$;
\item [c)] (recursion) for any $i \in \N_P, 0 <  i < m$,  and any $ w_1,w_2,w_3 \in K(F)$,   if $w=w_1A_{m,i-1}w_2A_{m,i}w_3$ then $w_3 = (1-\beta ^{-1}(\beta -w_2)x_{t_i}^{e_i})A_{m,i+1}v_1$ for some $v_1 \in K(F)$ and unique $\beta \in K,$ such that $(\beta -w_2)$ is invertible. 
\item [d)] (uniqueness) if $w_t = w_1A_{m,j}w_2 = w_1^\prime A_{m,j}w_2^\prime$ for some $w_1,w_1^\prime \in K(F)$, $w_2,w_2^\prime \in K(F)$ and  $j <m$ then $w_1 = \alpha w_1^\prime, 
 w_2 = (\alpha)^{-1}w_2^\prime$, $\alpha \in K$.
  \end{itemize}
 One can write down the condition a) -d) by formulas of the language $L_X$.  Notice, that by construction $w_t =w_1A_{m,m-1}(1-M_t)A_{m,m}$ for some $w_1$  as above.
Therefore we can define the set $\{w=\beta (1-M_t), \beta\in K\}$.  Then $u=M_t$ is defined  by the formula 
$$\phi _3 (u,w)=\exists \beta\in K (\beta\neq 0)\wedge (w-\beta) {\rm is\  invertible} \wedge u=\beta ^{-1} (\beta -w).$$

 Hence there  exists a formula $\phi_4(y_1,y_2,y_3,y_4,y_5,y_6)$ of the  language $L_X$ such that 
$$
K(F) \models \phi_4(t,w,u,a,m,P) \Longleftrightarrow  t \in T_P, w= w_t, u = M_t, a = a_m, m\in \N_c.
$$  Notice that we use the fact that  the set of pairs $(m,a_m)$ is definable by Lemma \ref{le:a-m}.
Therefore, the formula 
$$\phi_5(t,u,P, X) = \exists w \exists a \exists m \phi_4(t,w,u,a,m,P)$$
 defines all the pairs $(t,M_t)$ for $t \in T_P$, $M_t \in F$. This formula defines an isomorphism of interpretations $F(X) \to \MM_{X,P}$ given  by the map $M \to t_M$, as required. In this formula $X$ is the set of parameters, showing that the formula is in  the language $L_X$.

\end{proof}

We interpret $K(F)$ in $S({K}, \N)$ as follows. Let $\MM_{X,P}$ be the interpretation of $F(X)$ in $S(\Z,\N)$ as above. For an element  $f=\sum_{i= 1}^s \alpha _iM_i \in K(F)$, where $\alpha_i \in K, M_i \in F(X)$,  we associate a pair  $q_f = (\overline\alpha, \overline t)$, where $\overline\alpha = (\alpha_1, \ldots, \alpha_s)$, $\overline t = (t_{M_1}, \ldots,t_{M_s})$.  This gives interpretation, say $K(F)^{\diamond}$, of $K(F)$ in $S(S(K,\N), \N)$  (where we have sequences of elements from the union of $\Z$ and $K$), and, therefore, in $S(K,\N)$ by Corollary \ref{c2}. Indeed, one can interpret addition and multiplication in $K(F)$ similarly to how it was done in \cite{assoc}, Theorem 2 for a free associative algebra. By Proposition \ref{th:S(F,N)-non-comm},  $S(K,\N)$ is 0-interpretable  in $K(F) $  uniformly in $K$ and $F$, hence by transitivity of interpretations, we have interpretation  $K(F)^{\diamond\diamond}$ in $K(F)$.

\begin{lemma} \label{le:bi-int-iso}
The isomorphism between $K(F)^{\diamond\diamond}$ and $K(F)$ is definable.
\end{lemma}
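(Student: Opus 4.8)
The plan is to write down an explicit formula $\Theta(f,q,P,X)$ of the language $L_X$ that defines the graph of the canonical isomorphism $K(F)\to K(F)^{\diamond\diamond}$. By construction that isomorphism sends an element $f=\sum_{i=1}^{s}\alpha_iM_i\in K(F)$, written with pairwise distinct reduced monomials $M_i\in F$ and nonzero $\alpha_i\in K$, to the $\sim$-class of the pair of tuples $(\bar\alpha,\bar\tau)$, where $\bar\alpha=(\alpha_1,\dots,\alpha_s)$ lies in the $S(K)$-sort of $S(K,\N)_P$ and $\bar\tau=(\tau_1,\dots,\tau_s)$ is a tuple of elements $\tau_i\in T_P$ coding the $M_i$. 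So $\Theta(f,q,P,X)$ should assert: there is a non-degenerate such pair $(\bar\alpha,\bar\tau)$ (matching lengths, the $\tau_i$ coding distinct monomials, all $\alpha_i\neq 0$) for which $q$ equals the element of $K(F)^{\diamond\diamond}$ assembled from $(\bar\alpha,\bar\tau)$, and for which $f=\sum_{i=1}^{s}\alpha_iM_{\tau_i}$, where $M_{\tau_i}\in F$ is the monomial coded by $\tau_i$. Once such a $\Theta$ is available, the verification that it is the canonical isomorphism is routine: it is total (every $f$ has a decomposition), well defined (two non-degenerate decompositions of the same $f$ differ only by a reordering, hence give $\sim$-equivalent pairs), injective (equal images force equal supports and coefficients, by uniqueness of the expansion of $f$ in the $K$-basis $F$), surjective (each $q$ has a non-degenerate representing pair $(\bar\alpha,\bar\tau)$, and then $\sum\alpha_iM_{\tau_i}$ maps to it), and a ring homomorphism (the interpreted operations on $K(F)^{\diamond\diamond}$ were defined, as in \cite{assoc}, Theorem~2, to mirror those of $K(F)$ on representing pairs); since it restricts to the identity on $K$ and to the map of Lemma~\ref{le:M-X-c} on $F$, and $K\cup X\cup X^{-1}$ generates $K(F)$ as a ring, it is forced to be the canonical map.

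Two ingredients enter $\Theta$. First, the \emph{assembly} of a pair $(\bar\alpha,\bar\tau)$ into the represented element of $K(F)^{\diamond\diamond}$ is definable in $K(F)$: by Proposition~\ref{th:S(F,N)-non-comm} and Corollary~\ref{c2} the structure $S(K,\N)_P$, together with the construction of $K(F)^{\diamond}$ inside $S(K,\N)$, is available in $K(F)$ by explicit formulas; the first sort of $S(K,\N)_P$ is literally $K\subseteq K(F)$, and the translation $M\leftrightarrow t_M^{\diamond}$ between a monomial and its code is precisely the definable isomorphism $F(X)\to\MM_{X,P}$ of Lemma~\ref{le:M-X-c}. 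Second — and this is the only real point — I must express ``$f=\sum_{i=1}^{s}\alpha_iM_{\tau_i}$'' for \emph{unbounded} $s$. The plan is to imitate the recursive devices in the proofs of Lemmas~\ref{le:a-m} and~\ref{le:M-X-c}: from $\bar\tau$ one defines $m\in\N_P$ to exceed twice the sum of the lengths of $M_{\tau_1},\dots,M_{\tau_s}$ (these lengths are read off $\bar\tau$ via the length function on $T_P$, and the pair $(a_m,m)$ is definable by Lemma~\ref{le:a-m}), and then forms the single element $V=A_{m,0}\,r_0\,A_{m,1}\,r_1\cdots r_{s}\,A_{m,s+1}$, where $r_0=0$ and $r_j=r_{j-1}+\alpha_jM_{\tau_j}$ are the partial sums, all of length below $m/2$. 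By Lemma~\ref{le:uni1} the slot contents of $V$ are determined up to scalars, so the conditions $r_0=0$, $r_s=f$ and the recursion $r_j=r_{j-1}+\alpha_jM_{\tau_j}$ — with $\alpha_j$ the $j$-th entry of $\bar\alpha$ and $M_{\tau_j}$ recovered from the $j$-th entry of $\bar\tau$ via Lemma~\ref{le:M-X-c} — become a first-order formula over $K(F)$ with parameters $P,X$, the scalar ambiguity being handled exactly as in the proof of Lemma~\ref{le:M-X-c}. (Equivalently, since $\N$ and $S(K,\N)$ are interpretable in $K(F)$, one could run this recursion over a coded finite sequence; the $A_{m,i}$-encoding just keeps everything inside $K(F)$ without extra machinery.)

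The main obstacle is thus the second ingredient: simulating the ``finite but unbounded'' summation $f=\sum\alpha_iM_{\tau_i}$ by a bounded-quantifier formula. The delicate points are choosing $m$ definably from $\bar\tau$, extracting the slot contents $r_j$ of $V$ up to the unavoidable scalar factors and checking these factors telescope correctly, and aligning the $j$-th entries of $\bar\alpha$ and $\bar\tau$ with the $j$-th recursion step — all of which are of exactly the same nature as the computations already performed in Lemmas~\ref{le:a-m} and~\ref{le:M-X-c}, so no new idea beyond those lemmas is needed.
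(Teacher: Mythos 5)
Your proposal follows the same route as the paper's proof of the $|X|\geq 2$ case: pack the partial sums of $f=\sum\alpha_iM_{\tau_i}$ into a single ring element bracketed by the markers $A_{m,i}$, choose $m$ large enough that Lemma~\ref{le:uni1} pins down the slot contents up to scalars, and then express the recursion on slots together with the matching against $\bar\alpha$ and $\bar\tau$ by first-order formulas with parameter $P,X$, exactly as in the proofs of Lemmas~\ref{le:a-m} and~\ref{le:M-X-c}. Two points should be repaired. First, your initialization $r_0=0$ makes the whole product $V=A_{m,0}\,r_0\,A_{m,1}\cdots$ vanish, so $V$ carries no information; you must keep every slot content nonzero. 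The paper avoids this by using $h_i=\gamma_i-(\alpha_1M_1+\cdots+\alpha_iM_i)$ with $\gamma_i=\alpha_1+\cdots+\alpha_i$ rather than the raw partial sums $r_i$, which has the further advantage that the slot increments $h_{i+1}-h_i=\alpha_{i+1}-\alpha_{i+1}M_{i+1}$ split definably into a constant part and an invertible non-scalar part, which is exactly what drives the recursion and the uniqueness conditions. Second, your construction silently assumes $|X|\geq 2$ (the markers $A_{m,i}$ require two non-commuting generators); the paper treats the rank-one case $K[t,t^{-1}]$ separately using the one-variable encoding of Lemma~\ref{th:HF-group2}, and your argument needs that case too.
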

\begin{proof} We will first consider the case  $|X|=1$. In this case $K(F)=K[t,t^{-1}]$. An element $f(t)=t^{-m}(\alpha _0+\alpha _1t+\ldots ,\alpha _nt^n)$ in $K[t,t^{-1}]$ is interpreted as a pair $((\alpha _0,\alpha _1,\ldots ,\alpha _n), m)\in S(K,\mathbb N),$ where $m\in\mathbb N$. Such a pair is interpreted back in $K[t,t^{-1}]$ as the equivalence class of a pair $s_P=(\sum \alpha _iP^i, P^m)$, where $P$ is the sum of al least three monomials. The relation $((f(P), \{t^kf(t), k\in\mathbb N\})$ is definable as in the second part of Lemma 2. The relation $((f(P),P^m-1), t^{-m}f(t))$ is then also definable. Therefore the isomorphism $K[t,t^{-1}]^{\diamond\diamond}\rightarrow K[t,t^{-1}]$ is definable.

Let now $|X|\geq 2$. Given a pair $q = (\overline\alpha, \overline t)$, where $\overline \alpha =  (\alpha_1, \ldots, \alpha_s)$, $\overline t = (t_1, \ldots,t_s)$ define a polynomial $f_q$ as follows. 
Let $m= 2\max\{\ell(t_i) \mid i = 1, \ldots,s\}$ and $a_m,b_m,c_m \in F(X)$ defined above. Notice that having a sequence $\overline \alpha$ we can define a sequence $\overline \gamma =(\gamma _1,\ldots ,\gamma _s)$, where $\gamma _i=\alpha _1+\ldots +\alpha _i$ by $\gamma _i=\gamma _{i-1}+\alpha _i.$
We define
\begin{equation} \label{eq:p-unique}
f_q=A_{m,1}h_1A_{m,2}h_2\ldots A_{m,s}h_{s}A_{m,s+1},
\end{equation}
 where $h_1=\alpha _1-\alpha _1M_1, h_{i+1}=h_i+\alpha _{i+1}-\alpha _{i+1}M_{i+1}$, so in particular, $h_s = \alpha_1+\ldots \alpha _s-f$.
 Assume that  $s\geq 2.$
The polynomial $f_q$ is uniquely determined by the following conditions:
\begin{itemize}
\item [i)] $f_q = A_{m,1}h_1A_{m,2}h_2A_{m,3}g$,  $g\in K(F)$;
\item [ii)] for any $1\leq i \leq s$ if  $f_q = g_1A_{m,i}g_2 = g_1^\prime A_{m,i}g_2^\prime$,  then $g_1 = g_1^\prime, g_2 = g_2^\prime$ (up to a multiplicative constant from $K$).  
\item [iii)] If $s>2$, for any $1\leq i \leq s-2$, if $f_q = g_0A_{m,i}g_1A_{m,i+1}g_2A_{m,i+2}g_3$, and $g_2-g_1=\alpha _{i+1}+x$, where $x$ is invertible and  $x\not\in K$,  then $g_3 = h_{i+2}A_{m,i+3}g_4$, where $h_{i+2} =  g_2 +\alpha_{i+2} -\alpha_{i+2}M_{i+2}$.
 \item [iv)] $f_q = g_5A_{m,s+1}$.
\end{itemize}

Indeed, to show that i)-iv) determine $f_q$ completely one needs the uniqueness of the decomposition (\ref{eq:p-unique}) up to  scalar multiples, which follows from Lemma \ref{le:uni1} and the property that $h_{i+1}-h_i=\alpha _{i+1}+x,$ where $x$ is invertible and not in $K$. This isomorphism does not depend on $P$ and is defined uniformly in $P$.  Knowing $h_s$ we can find
$\alpha _1M_1+\ldots +\alpha _sM_s=- h_s+\alpha _1+\ldots +\alpha_s$. \end{proof}

\begin{lemma} The isomorphism between $S(K,\N)$ and $S(K,\N)^{\diamond\diamond}$ is definable. 
\end{lemma}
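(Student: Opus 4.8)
The plan is to run the argument of Lemma~\ref{le:bi-int-iso} on the other side of the bi-interpretation, now working inside $S(K,\N)$. Recall that $S(K,\N)^{\diamond\diamond}$ is obtained in two steps: first one passes to the interpreted copy $K(F)^{\diamond}$ of $K(F)$ sitting in $S(K,\N)$, and then one re-interprets $S(K,\N)$ inside $K(F)^{\diamond}$ by the formulas of Proposition~\ref{th:S(F,N)-non-comm} (equivalently Theorem~\ref{th:HF-group}). Every relation definable in $K(F)^{\diamond}$, hence everything those formulas produce, is definable in $S(K,\N)$; so it suffices to exhibit, on each of the three sorts of $S(K,\N)$, a formula picking out the canonical identification with the corresponding sort of $S(K,\N)^{\diamond\diamond}$, and to check that the round trips are the identity.

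First I would fix a concrete non-invertible element that is a sum of three monomials, say $P = 1 + x_1 + x_1^{2} \in K(F)$; since $F$ is LO it is a non-unit by Kaplansky's unit conjecture, and its image $P^{\diamond} \in K(F)^{\diamond}$ is a definable element of the interpreted ring (the monomials $1, x_1, x_1^{2}$ have fixed $F$-codes). Using $P^{\diamond}$ as the parameter of Proposition~\ref{th:S(F,N)-non-comm}, the $\N$-sort of $S(K,\N)^{\diamond\diamond}$ is $\{(P^{\diamond})^{n}\}$, and the map $n \mapsto (P^{\diamond})^{n}$ is definable in $S(K,\N)$ by arithmetic recursion (the ring operations of $K(F)^{\diamond}$ and the arithmetic of the $\N$-sort are at hand and finite sequences can be coded), identifying $n$ with $n \in \N^{\diamond\diamond}$. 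The $K$-sort of $S(K,\N)^{\diamond\diamond}$ is the field of elements $\alpha \cdot 1^{\diamond}$ of $K(F)^{\diamond}$; the map $\alpha \mapsto \alpha \cdot 1^{\diamond}$ is plainly definable and agrees with the interpretation of $K$ inside $K(F)^{\diamond}$, so the round trip $K \to K^{\diamond\diamond}$ is the identity. Finally, a sequence $s = (\alpha_0, \ldots, \alpha_m)$ of the $S(K)$-sort is sent to the pair $(\sum_{i=0}^{m} \alpha_i (P^{\diamond})^{i},\, (P^{\diamond})^{m}) \in S(K)^{\diamond}_{P^{\diamond}}$; that this pair is definably computable from $s$ is exactly the internal version of the coding in Theorem~\ref{th:HF-group}, carried out with the arithmetic and the ring $K(F)^{\diamond}$ available inside $S(K,\N)$, and the coordinate and length predicates $t_{P^{\diamond}}, l_{P^{\diamond}}$ of that coding match $t, l$ of $S(K,\N)$ by construction, so the round trip returns $s$.

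Assembling these three coordinate formulas yields a formula defining an isomorphism $S(K,\N) \to S(K,\N)^{\diamond\diamond}$ for the chosen parameter $P^{\diamond}$; to obtain the genuine $0$-interpretation (glued from the parameterized copies) one post-composes with the definable isomorphisms $\nu_{P,Q}$ of Proposition~\ref{th:S(F,N)-non-comm}(2), read off inside $K(F)^{\diamond}$. Together with Lemma~\ref{le:bi-int-iso} this completes the proof of Theorem~\ref{th:bi}. Note that a direct verification of this kind seems to be needed: definability of one coordinate map of a bi-interpretation does not in general force definability of the other.

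The main obstacle, exactly as in Lemma~\ref{le:bi-int-iso}, is \emph{coherence}: one must be sure the two interpretations --- $S(K,\N)$ in $K(F)$ and $K(F)^{\diamond}$ in $S(K,\N)$ --- are set up compatibly enough that each composite is the identity on each sort rather than merely some definable bijection, and that the required length- and coordinate recursions can be written with $l(s)$ occurring only as a bound variable. For the $\N$- and $K$-sorts this is immediate from the descriptions above; the place that needs genuine bookkeeping is the $S(K)$-sort, where the tuple encoding used by $K(F)^{\diamond}$ must be reconciled with the polynomial-and-power encoding of $S(K)$ inside $K(F)$ --- the same bookkeeping that already appears in the proof of Lemma~\ref{le:bi-int-iso}, transposed to this side.
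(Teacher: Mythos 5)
Your proposal is correct in outline, but it takes a genuinely different route from the paper. The paper does \emph{not} rebuild the map $s\mapsto(\sum_i\alpha_i(P^\diamond)^i,(P^\diamond)^m)$ from scratch inside $S(K,\N)$. Instead it recycles the definability already secured in Lemma~\ref{le:bi-int-iso}: the relation $\bigl((f(P),P-1),\,x_1^{-1}f(x_1)\bigr)$ is definable in $K(F)$, and pushing this through the interpretation $\phi$ of $S(S(K,\N),\N)$ in $S(K,\N)$ directly produces a definable relation in $S(K,\N)$ linking the pair $(\bar\beta,\bar t)$ (the $S(K,\N)^{\diamond\diamond}$-avatar of $s_P$) with the tuple $(\alpha_1,\dots,\alpha_m)$. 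In other words, the paper transports a $K(F)$-side definability fact through the interpretation, whereas you reconstruct the identification inside $S(K,\N)$ by an explicit recursion in each sort. Both styles achieve the same end; the paper's is more economical because it avoids rederiving the polynomial coding, while yours is more self-contained and makes the ``each round trip is the identity'' claim explicit on each of the three sorts.

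The one place your proof needs to be tightened is the sentence claiming that $n\mapsto(P^\diamond)^n$ and $s\mapsto\sum_i\alpha_i(P^\diamond)^i$ are definable ``by arithmetic recursion'' and as ``the internal version of the coding in Theorem~\ref{th:HF-group}.'' These are maps into $K(F)^\diamond$, whose elements are tuples of unbounded length over $K\cup\Z$, so the recursion requires quantifying over \emph{sequences of such tuples} (the partial products or partial sums). That is not pure arithmetic: you need Corollary~\ref{c2}, i.e.\ $S(S(K,\N),\N)\to_{int}S(K,\N)$, to code the auxiliary sequence of accumulators inside $S(K,\N)$. Also, because $K$ need not be interpretable in $\N$, the $\alpha_i$ cannot be absorbed into numerical coding; they must be carried along as genuine $K$-sort entries of the coded sequence, which is again exactly what $S(S(K,\N),\N)\to_{int}S(K,\N)$ supplies. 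Once you invoke that corollary explicitly, the recursion argument goes through, and the rest of your proof (fixing the parameter $P=1+x_1+x_1^2$, noting $P^\diamond$ is a definable constant, checking the $K$- and $\N$-sorts, and gluing via $\nu_{P,Q}$) is sound and matches the paper's intent.
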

\begin{proof}  Recall, that in Proposition \ref{th:S(F,N)-non-comm} for a given non-invertible polynomial $P\in K(F)$ which is the sum of at least three monomials one  interpret $S(K,\N)$ in $K(F)$ by $S(K,\N)_P$, using the parameter $P$ uniformly in $K, X,$ and $P$ by 
representing a tuple $s=(\alpha _1,\ldots ,\alpha_m)\in S(K,N)$ as  a pair $s_P = (\sum_{i = 0}^m \alpha_iP^i, P^m) \in S(K)_P$.  For any other such a polynomial $Q$  there is another interpretation $r_Q = (\sum_{i = 0}^m \alpha_iQ^i, Q^m) \in S(K)_Q$. And there is a formula which states that for any $0 \leq i \leq l(s) = l(r)$ the sequences   $s_P$ and $r_Q$ have the same $i$ terms in $K$. Hence   
the set of pairs 
$$
\{ (\sum_{i = 0}^m \alpha_iP^i, \sum_{i = 0}^m \alpha_iQ^i) \mid \alpha_i \in K, m \in \N\}
$$
 is also definable in $K(F)$ uniformly in $K, X, P$ and $Q$. This gives an isomorphism $ S(K,\N)_P \to S(K,\N)_Q$ definable in $K(G)$ uniformly in $K$, $G$, $P$, and $Q$ and factorizing by this equivalence relation on the set of pairs we obtain $0$-interpretation of $S(K,\N)$ in $K(F)$ denoted by $S(K,\N)^{\diamond}.$ 
 
 The interpretation $S(K,\N)^{\diamond\diamond}$ in $S(K,\N)$ is obtained by 
 defining in $S(S(K,\mathbb N),\mathbb  N)$  the images $(\bar\beta ,\bar t)$ of the set of pairs $s_P = (\sum_{i = 0}^m \alpha_iP^i, P^m)$ for different $P$ and factorizing this image by the image of the above equivalence  relation on these pairs. Let $(\bar\gamma ,\bar t_1)$ be the image of $P-1$.  The relation $((f(P),P-1), x_1^{-1}f(x_1))$ is definable in $K(F)$ as in the previous lemma, therefore the relation $(\phi((\bar\beta ,\bar t),(\bar\gamma ,\bar t_1)), (\alpha _1,\ldots ,\alpha_m))$, where $\phi$ is the interpretation of $S(S(K,\N),\bar N)$ in $S(K,\mathbb N)$, is definable in $S(K,\mathbb N)$. This relation defines the isomorphism between  $S(K,\mathbb N)^{\diamond\diamond}$  and $S(K,\mathbb N)$.  
 
\end{proof}
Theorem \ref{th:bi} is proved.

\begin{theorem} \label{th:bi1} Let $L$ be a  non-abelian limit group and $K$ be an infinite field, then the structures $S(K,\N)$ and $K(L)$ are bi-interpretable.
Moreover, $L$ is interpretable in $S(\Z,\N)$ as $L^{\diamond}$ and the canonical isomorphism $L\rightarrow L^{\diamond\diamond}$ in $K(L)$ is definable. \end{theorem}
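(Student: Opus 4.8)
The plan is to repeat, essentially verbatim, the proof of Theorem~\ref{th:bi}, with the free group $F$ replaced by the non-abelian limit group $L$, and to point out at each step why the ingredients used for $F$ survive the passage to $L$. Concretely I would establish: (i) $S(K,\N)$ is $0$-interpretable in $K(L)$, uniformly in $K$; (ii) $K(L)$ is interpretable in $S(K,\N)$, and $L$ is $0$-interpretable in $S(\Z,\N)$, as $L^{\diamond}$; (iii) the resulting self-interpretations $K(L)^{\diamond\diamond}$, $L^{\diamond\diamond}$ and $S(K,\N)^{\diamond\diamond}$ carry isomorphisms $K(L)\to K(L)^{\diamond\diamond}$, $L\to L^{\diamond\diamond}$ and $S(K,\N)\to S(K,\N)^{\diamond\diamond}$ definable in $K(L)$, $K(L)$ and $S(K,\N)$ respectively. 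Items (i)--(iii) together give the bi-interpretability and the ``Moreover'' clause.

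For (i) and (ii): a limit group is torsion-free, commutative transitive and bi-orderable (it embeds into an ultrapower of a free group), hence satisfies Kaplansky's unit conjecture, so by Lemma~\ref{le:units} the field $K$ is Diophantine in $K(L)$. Picking $1\neq g\in L$ whose centralizer $C_L(g)$ is free abelian of some finite rank $n\geq 1$, Lemma~\ref{pr:centralizer-hyp} identifies the definable subring $C_{K(L)}(g)$ of $K(L)$ with $K[x_1^{\pm1},\ldots,x_n^{\pm1}]$, and by Lemma~\ref{th:HF-group1} (through Lemmas~\ref{natural} and~\ref{le:limit1}) $S(K,\N)$ is $0$-interpretable there, hence in $K(L)$, uniformly in $K$. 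For the reverse direction, every limit group is finitely generated with decidable word problem, so $L$ is $0$-interpretable in $\N$, hence in $S(\Z,\N)$ and in the copy $\N_P$ of arithmetic living in $K(L)$: elements of $L$ are coded by words in a fixed finite generating set, i.e.\ by tuples of integers, with the (recursive, hence arithmetically definable) relation ``equal in $L$'' as the equivalence relation and with multiplication computed by the word-problem algorithm. Then, exactly as in \cite{assoc}, Theorem~2 and in the free case, $K(L)$ is interpreted in $S(S(K,\N),\N)$ by coding $f=\sum_{i=1}^{s}\alpha_iM_i$ as the pair of tuples $(\bar\alpha,\bar t)$, hence in $S(K,\N)$ by Corollary~\ref{c2}; composing with (i) yields $K(L)^{\diamond\diamond}$, and composing the interpretation of $L$ in $S(\Z,\N)$ with the interpretation of $S(\Z,\N)$ in $K(L)$ yields $L^{\diamond\diamond}$.

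For (iii), the heart of the matter: fix a non-abelian free subgroup $F=F(x_1,x_2)\leq L$ and the rigid markers $a_m,b_m,c_m\in F$ and $A_{m,i}$ from Lemmas~\ref{le:uni1} and~\ref{le:uni}. An element $M\in L$ with integer-tuple code $t$ is recovered inside $K(L)$ by the formula of Lemma~\ref{le:M-X-c} applied to the word $w_t$ of the form $A_{m,0}(1-M^{(1)})A_{m,1}(1-M^{(2)})\cdots A_{m,m-1}(1-M)A_{m,m}$, where $M^{(j)}$ is the length-$j$ prefix of the word representing $M$ and $m$ exceeds twice the length of $M$; the uniqueness of this factorization in $K(L)$ is precisely Lemma~\ref{le:limit}, and the auxiliary fact that the graph of $m\mapsto a_m$ is definable in $K(L)$ (the analog of Lemma~\ref{le:a-m}) follows from Lemma~\ref{le:limit} together with Lemma~\ref{10}, whose proof uses only the centralizer description of Lemma~\ref{pr:centralizer-hyp} and a bi-order on $L$. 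This yields both $L\to_{int}S(\Z,\N)$ as $L^{\diamond}$ and the definable isomorphism $L\to L^{\diamond\diamond}$. The isomorphism $K(L)\to K(L)^{\diamond\diamond}$ is then built as in Lemma~\ref{le:bi-int-iso}: from $(\bar\alpha,\bar t)$ one reconstructs $f$ via $f_q=A_{m,1}h_1A_{m,2}h_2\cdots h_sA_{m,s+1}$ with $h_{i+1}=h_i+\alpha_{i+1}-\alpha_{i+1}M_{i+1}$, the uniqueness of this decomposition up to scalars being the second assertion of Lemma~\ref{le:uni1}. Finally $S(K,\N)\to S(K,\N)^{\diamond\diamond}$ is definable by the argument following Lemma~\ref{le:bi-int-iso}, using the definable gluing isomorphisms $S(K,\N)_P\to S(K,\N)_Q$, which exist in $K(L)$ by the reasoning of Proposition~\ref{th:S(F,N)-non-comm}(2) --- this needs only that $\N$ is interpretable and $K$ is definable in $K(L)$, both secured above.

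I expect the main obstacle to be that $K(L)$ is not known to be a UFD --- Cohn's theorem (Lemma~\ref{le:co}) applies only to free groups --- so the comaximal-transposition and rigidity arguments underpinning Lemmas~\ref{le:a-m} and~\ref{le:uni1} are not directly available over $L$. The way around it is that every uniqueness statement we need (``$w_t$ determines $M$'', ``the $f_q$-decomposition is unique up to scalars'', ``$(a_m,m)$ is definable'') unwinds, by equating coefficients of monomials, into a \emph{universal} sentence in the group language with constants from the free subgroup $F$, and such a sentence holds in $L$ iff it holds in $F$ since $L$ and $F$ have the same universal theory with constants in $F$; this is exactly the mechanism of Lemma~\ref{le:limit}, which I would invoke --- in the slightly more general forms demanded by Lemmas~\ref{le:a-m} and~\ref{le:uni1} --- at each such step. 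A secondary point is uniformity in $K$ and in the chosen finite presentation of $L$, which is automatic: the defining formulas are the ones already used in the free case, enlarged by the fixed recursive word-problem data of $L$.
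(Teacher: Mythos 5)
Your proposal is correct and follows essentially the same route as the paper: fix a non-abelian free subgroup $F\leq L$ and a finite generating set of $L$ containing generators of $F$, use the decidable word problem to interpret $L$ in $S(\Z,\N)$ via recursively enumerated canonical representatives, re-use the free-group rigid markers $A_{m,i}$ built from $F$, and --- exactly as you observe --- replace the UFD/comaximal-transposition machinery (unavailable in $K(L)$) by the transfer mechanism of Lemma~\ref{le:limit}, which reduces each needed uniqueness statement to a universal sentence with constants in $F$, true in $L$ because $F$ and $L$ have the same universal theory over $F$. The paper's own proof is merely a four-line sketch invoking precisely these ingredients (the free subgroup, geodesic enumeration, and Lemma~\ref{le:limit}); your proposal is a correct and more explicit unwinding of that sketch.
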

\begin{proof} Since $L$ is non-abelian  it contains a free subgroup $F$. We fix a finite set of generators of $L$ including generators of $F$. Since the word problem in $L$ is solvable there exists a recursive  enumeration of geodesics in the Cayley graph of $L$, one (left lexicographically minimal) geodesic for each element of $L$. Lemma \ref{le:limit} implies that one can use the same construction as in $K(F)$ to prove bi-interpretation. 

%If $L$ is abelian,  then $K(L)$ is isomorphic to the algebra of Laurent polynomials, and the statement follows from the previous theorem.
\end{proof}

 \subsection{Definability of bases of $F$ in $K(F)$}
 
 We continue to use notation from the previous sections. In particular, below   $K$ is  an infinite field,  $X = \{x_1, \ldots,x_n\}$ is a set with $n = |X| \geq 2$, $F=F(X)$.

 In this section we prove the following result.  
 \begin{theorem} \label{th:bases} 
 The set of all free bases of $F$ is 0-definable in $K(F)$.
 \end{theorem}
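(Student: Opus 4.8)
The plan is to leverage the bi-interpretability of $K(F)$ with $S(K,\N)$ (Theorem \ref{th:bi}) together with the definable canonical isomorphism $F(X)\to\MM_{X,P}$ (Lemma \ref{le:M-X-c}). The key observation is that ``being a free basis of $F$'' is an arithmetic/combinatorial property of tuples of reduced words once one has access to the word structure of $F$. Concretely, first I would fix a non-invertible parameter $P\in K(F)$ that is the sum of at least three monomials and recall that, via $\N_P$ and the interpretation $\MM_{X,P}$, the set $T_P\subseteq K(F)$ of codes of reduced words is definable, the length function $\ell$ is definable, the $i$-th letter predicate $t(s,i,a)$ is definable, and group multiplication on $\MM_{X,P}$ is definable. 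By Lemma \ref{le:M-X-c} the isomorphism $M\mapsto t^\diamond$ is definable, so an element $g\in F\subseteq K(F)$ and its word-code $t^\diamond\in T_P$ are interchangeable inside a first-order formula of $K(F)$.

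The second step is to express ``$(g_1,\dots,g_n)$ is a free basis of $F$'' in this interpreted setting. A tuple of elements of a free group $F$ of rank $n$ is a free basis if and only if it generates $F$; and since we know $n=|X|$, generation is equivalent to: each standard generator $x_j$ lies in the subgroup generated by $g_1,\dots,g_n$. Membership of $x_j$ in $\langle g_1,\dots,g_n\rangle$ is an existential statement ``there is a word $w$ in the $g_i^{\pm1}$ equal to $x_j$'', and a word in $n$ letters of some length $\ell$ is itself coded by a tuple (an element of $S(\N,\N)$, hence available in $\N_P$): one quantifies over a code for the exponent sequence and asserts, using the definable multiplication on $\MM_{X,P}$ and the definable evaluation of such a code, that the resulting product equals $x_j$. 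Because $S(\N,\N)$ is $0$-interpretable in $\N$ (Lemma \ref{le:list-superstructure}) and $\N$ is interpretable in $K(F)$, all of this is first-order expressible in $K(F)$. Thus $\{(g_1,\dots,g_n) : \langle g_1,\dots,g_n\rangle = F\}$ is definable, and since a generating $n$-tuple of a rank-$n$ free group is automatically a basis (Hopfian/Nielsen), this is exactly the set of free bases. One should also intersect with the condition $g_i\in F$ (i.e., $g_i$ is a trivial unit, which is definable by Lemma \ref{le:units}) so that the defined set lives in $F^n$ and not in $K(F)^n$. Finally, one checks the formula does not depend on $P$: either because the defined set is intrinsic, or by invoking the definable isomorphisms $\nu_{P,Q}$ of Proposition \ref{th:S(F,N)-non-comm} to eliminate $P$ via the bi-interpretation, yielding a $0$-definition.

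The main obstacle I anticipate is the bounded-quantifier bookkeeping in the generation formula: one must quantify over a \emph{single} code for an arbitrarily long word in the $g_i$ and \emph{evaluate} that word inside $\MM_{X,P}$ by a uniform formula whose length does not grow with the word length. This is precisely the kind of recursive ``evaluate a coded product'' construction carried out for $w_t$ in Lemma \ref{le:uni1} and Lemma \ref{le:M-X-c} (conditions (a)--(d) there), so the remedy is to reuse that machinery: represent the partial products by a sequence coded in $S(K,\N)$ and pin it down by head/tail/recursion/uniqueness clauses rather than by an unbounded conjunction. A secondary point to get right is that membership in a finitely generated subgroup of $F$ is genuinely only $\Sigma_1$ over the interpreted word structure (there is no a priori length bound on the witnessing word), but that is fine since existential quantification over $\N_P$ and over $T_P$ is allowed; decidability of the membership problem in free groups is not needed, only its first-order expressibility via an existential quantifier. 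Everything else — the definability of $\N$, of $K$, of the word structure, and of the canonical isomorphism — is already supplied by the results quoted above.
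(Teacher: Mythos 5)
Your proposal is close in spirit to the paper's proof and correctly identifies the right machinery --- the $w_t$ recursion of Lemma~\ref{le:uni1}, the decoding formula $\phi_5$ of Lemma~\ref{le:M-X-c}, and the bi-interpretability results --- but there is a genuine gap with respect to the $0$-definability claim. You express generation by requiring each standard generator $x_j$ to lie in $\langle g_1,\dots,g_n\rangle$, which bakes the distinguished basis $X$ into the formula as ring-theoretic constants (both as the elements $x_j$ and as the parameters of the subgroup-membership formula $Group_n(\cdot,\cdot,X)$). Such constants are precisely what $0$-definability forbids, and since ``being a free basis'' is the very thing the formula must define, you cannot existentially quantify $X$ out as ``some free basis'' without circularity. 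You carefully discuss eliminating the parameter $P$, but leave the $X$-dependence unaddressed. The paper avoids the issue altogether: it substitutes the \emph{candidate} tuple $Y$ itself for $X$ inside $\phi_5$, so that $\phi_5(t,u,P,Y)$ has $Y$ as a tuple of free variables and no distinguished constants, and then expresses generation as $\forall u\in F\, \exists t\, \phi_5(t,u,P,Y)$ over the already $0$-definable set $F$; only $P$ then remains, and it is removed by the observation that ``$P$ is non-invertible and a sum of at least three monomials'' is $0$-definable. The fix to your proposal is exactly this change of alphabet: run the head/tail/recursion/uniqueness evaluation you correctly flag with letters $Y$ rather than $X$, and replace the finite conjunction over the $x_j$ by the universal quantifier over $F$.

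A secondary, legitimate difference: you drop the freeness check by invoking Hopfianness (a generating $n$-tuple of a rank-$n$ free group is automatically a basis), whereas the paper keeps an explicit injectivity clause --- distinct reduced tuples $t_1\neq t_2$ yield distinct $Y$-monomials $M_{t_1}\neq M_{t_2}$ --- and does not appeal to Hopfianness. Your shortcut is mathematically valid and slightly simplifies the formula; the paper's version is more self-contained and, incidentally, also serves to guarantee that the internal clauses of $\phi_5$ (whose correctness rests on rigidity statements proved for a free alphabet) actually pin down a unique decoding when $Y$ is a genuine basis.
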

 \begin{proof}  By Lemma \ref{le:M-X-c}, $F$ is definable in $K(F)$. There is a definable  isomorphism between  $K(F)$ and $K(F)^{\diamond\diamond}$.  The set $Y=\{y_1,\ldots ,y_n\}$ forms a free base of $F$ if and only if it generates $F$ and for any different tuples $t_1$ and $t_2$ from the proof of Lemma \ref{le:M-X-c}, the monomials $M_{t_1}$ and $M_{t_2}$ are different.   So we write the formula 
$$ \forall u\in F\exists t \phi _5(t,u,P,Y)$$ meaning  that $Y$ generates $F$ and the formula
$$\forall  u_1,u_2\in F ((\phi _5(t_1,u_1,P,Y) \wedge \phi _5(t_2,u_2,P,Y))\implies (u_1=u_2\implies t_1=t_2))$$
  that says that for any two elements in $F$ and corresponding tuples $t_1$ and $t_2$,  if the tuples are different then the monomials $u_1$ and $u_2$ constructed using generating set $Y$ from these tuples are different.
 
 \end{proof}

 \subsection{Definability of the metric}
 
 Let $G$ be a group with a finite generating $X$. The word metric $d_X:G \times G \to \N$ on $G$ with respect to $X$ is defined for a pair $(g,h) \in G\times G$ as the length of a shortest word $w$  in the generators from $X \cup X^{-1}$ such that 
 $gw = h$.  This word $w$ is a geodesic between $g$ and $h$. This is precisely the geodesic metric on the Cayley graph of $G$ with respect to $X$, $Cay (G,X)$ that has elements of $G$ as vertices and directed edges labelled by the 
elements from the generating set $X$, so that for any $g\in G$ and $x\in X$ there is an edge $(g,gx).$ Below we view the metric space $G$ with metric $d_X$ as a structure $Met(G,X) = \langle G, \N, d_X\rangle$, where $G$ comes equipped with multiplication and the identity $1$, $\N$ is the standard arithmetic, and $d_X$ is the metric function $d_X: G \times G \to \N$.

 \begin{theorem} \label{th:metric}
Let $G$ be a limit group with  a finite generating set $X$, $K$ an infinite field. Then the metric space $Met(G,X)$ is definable in $K(G)$.
  \end{theorem}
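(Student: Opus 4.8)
The plan is to combine the bi-interpretability of $K(G)$ with $S(K,\N)$ (Theorem \ref{th:bi1}) with a careful first-order description of geodesic words, using the ``separator blocks'' $A_{m,i}$ built from atoms $(1-a_m),(1-b_m),(1-c_m)$ exactly as in Lemmas \ref{le:uni1}, \ref{le:a-m}, and \ref{le:limit}.

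\textbf{Step 1: Interpreting the coding machinery.} By Theorem \ref{th:bi1}, $K(G)$ and $S(K,\N)$ are bi-interpretable, and $G$ is interpretable in $S(\Z,\N)$ as $G^\diamond$ with the canonical isomorphism $G \to G^{\diamond\diamond}$ definable in $K(G)$. In particular the arithmetic $\N$ (as $\N_P$ for a non-invertible $P$ that is a sum of at least three monomials), the tuple structure $S(\Z,\N)_P$, and the set $T_P$ of reduced integer tuples coding elements of $G$ with respect to the fixed finite generating set $X$ are all definable in $K(G)$, uniformly in the parameter $P$. I will work throughout inside these interpreted structures; by Lemma \ref{Hodges} everything I say about them translates back to a first-order statement in $K(G)$.

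\textbf{Step 2: Defining the word metric on tuples.} For a reduced tuple $t = (e_1t_1,\ldots,e_\ell t_\ell) \in T_P$ coding $g \in G$, the naive length $\ell(t)$ need not equal $d_X(1,g)$, since in a limit group a reduced word in $X^{\pm1}$ need not be geodesic. Instead I will define $d_X(1,g)$ inside the interpreted copy of $S(\Z,\N)$ as follows: $d_X(1,g) = n$ iff there exists a tuple $s \in T_P$ of length $n$ whose associated monomial $M_s$ (recovered via the formula $\phi_5$, or rather its $G$-analogue from Lemma \ref{le:limit}, together with Lemma \ref{le:a-m}) equals $g^\diamond$ in $G^{\diamond\diamond}$, and no tuple of length $< n$ has this property. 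Concretely: using the separator word $w_t = A_{m,0}(1-v_1)A_{m,1}\cdots(1-v_n)A_{m,m}$ of Lemma \ref{le:limit} with $m$ chosen $\ge 2n$ and the $v_i$'s prefixes of the candidate geodesic, the formula $\psi_G(s,u)$ — stating ``$s$ codes a word of length $n$ whose product equals $u$'' — is first-order in $K(G)$; then
$$
\delta(u,n) \;:=\; \bigl(\exists s\in T_P\ \psi_G(s,u)\wedge \ell(s)=n\bigr)\ \wedge\ \forall s'\in T_P\ \bigl(\psi_G(s',u)\to \ell(s')\ge n\bigr)
$$
defines the graph of $g\mapsto d_X(1,g)$. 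Finally $d_X(g,h) = d_X(1,g^{-1}h)$, and $g^{-1}h$ is definable since $G$ (with multiplication and inversion, via $G^{\diamond\diamond}$) is definable in $K(G)$; multiplying by the definable isomorphism $G\to G^{\diamond\diamond}$ transports this to the actual group elements.

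\textbf{Step 3: Assembling $Met(G,X)$.} The structure $Met(G,X) = \langle G, \N, d_X\rangle$ then has all three ingredients definable in $K(G)$: $G$ with its multiplication and identity by Theorem \ref{th:bi1} (and Lemma \ref{le:M-X-c} in the free case, Lemma \ref{le:limit} in general), the standard arithmetic $\N$ via $\N_P$ (which by Lemma \ref{le:limit1} and Theorem \ref{th:HF-group} carries $+$ and $\cdot$ definably), and the metric function $d_X$ by Step 2. This gives an interpretation of $Met(G,X)$ in $K(G)$; since $P$ is itself $0$-definable as ``a non-unit that is not a sum of two units'' and we can glue the interpretations over different $P$ using the definable isomorphisms $\nu_{P,Q}$ of Proposition \ref{th:S(F,N)-non-comm}, the interpretation is in fact $0$-definable, completing the proof.

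\textbf{Main obstacle.} The delicate point is Step 2: I must ensure the separator-block construction of Lemmas \ref{le:uni1}--\ref{le:limit} still pins down the coded word \emph{uniquely} when the entries $v_i$ range over arbitrary elements of $G$ (not just $F$), and that the length bound ``$m \ge 2n$'' built into the $A_{m,i}$'s can be handled uniformly while $n$ is only a variable. This is exactly why the hypothesis that $G$ is a \emph{limit} group is used — Lemma \ref{le:limit} transfers the needed uniqueness from $K(F)$ to $K(G)$ via the coincidence of universal theories of $F$ and $G$ — and why one needs the irreducibility results of Lemmas \ref{ab}, \ref{irr} to guarantee the $(1-a_m),(1-b_m),(1-c_m)$ remain atoms generating proper ideals inside $K(G)$. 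Getting the recursion in conditions (a)--(d) of Lemma \ref{le:uni1} to read off prefixes of the geodesic, rather than of an arbitrary reduced word, is the technical heart; everything else is bookkeeping with the bi-interpretation.
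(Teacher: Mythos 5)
Your proof for the non-abelian case is essentially sound and uses the same machinery as the paper (bi-interpretability via Theorem~\ref{th:bi1}, the separator blocks $A_{m,i}$, and the definable isomorphism $G \to G^{\diamond\diamond}$), but there is a noteworthy difference in route. The paper's interpretation of a non-abelian limit group $G$ in $S(\Z,\N)$ (inside the proof of Theorem~\ref{th:bi1}) is already set up so that each element is coded by its left-lexicographically minimal \emph{geodesic} with respect to $X$; thus the tuple length is the distance $d_X(1,g)$ by construction, and no extra minimization is needed — the paper's proof of Theorem~\ref{th:metric} is literally the one-liner ``every geodesic corresponds to a tuple in $S(K,\N)$, and the length of the tuple is the length of the geodesic.'' Your proposal instead codes arbitrary words over $X^{\pm 1}$ and recovers $d_X$ by a definable minimization over all tuples whose product equals $g$; this works, and has the advantage of being agnostic to how the encoding is set up, but it forces you to carry the separator-block machinery into the definition of the metric itself rather than leaving it inside the proof of bi-interpretability.

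The genuine gap is that you have silently restricted to non-abelian $G$. Theorem~\ref{th:bi1}, on which your entire argument depends, assumes $G$ non-abelian (it needs a free subgroup $F \leq G$). But the statement you are proving allows any limit group, and free abelian groups $\Z^n$ are limit groups. For these, $K(G) = K[x_1,x_1^{-1},\dots,x_n,x_n^{-1}]$ and the bi-interpretability machinery of Theorem~\ref{th:bi1} is not available; one instead needs Theorem~\ref{th:HF-group} (interpretation of $S(K,\N)$ in the Laurent polynomial ring) together with Lemma~\ref{th:HF-group2} (definability of the isomorphism between the free abelian group and its double interpretation) to recover the metric. This abelian branch is absent from your proposal and needs to be supplied.
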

\begin{proof}
For a non-abelian limit group the statement follows from Theorem \ref{th:bi1}. Indeed, every geodesic corresponds to a tuple in $S(K,\N)$, and the length of the tuple is the length of the geodesic.

For an abelian group it follows from Theorem \ref{th:HF-group}  and  Lemma \ref{th:HF-group2}. 
\end{proof}

\begin{definition}
A geodesic metric space is called $\delta -${\em hyperbolic} if for every geodesic
triangle, each edge is contained in the $\delta $ neighborhood of the union
of the other two edges. A group is hyperbolic if it is $\delta$-hyperbolic for some $\delta >0.$
\end{definition}

\begin{definition} Let $G$ be a hyperbolic group, with Cayley graph $Cay (G,X)$.  A subgroup $H$ is $k$-{\em quasiconvex} if for every $h\in H$, every geodesic from the identity to $h$ is in the $k$-neighborhood of $H$. 
\end{definition}

A subgroup $H\leq G$ is  {\em malnormal} if $gHg^{-1}\cap H$ is finite (in torsion free group is trivial) for any $g\in G-H.$

Theorem \ref{th:metric} implies the following results.

\begin{theorem} \label{th:geom} 
Let $G$ be a  limit group with  a finite generating set $X$, $K$ an infinite field. Then the set of geodesics in $G$ with respect to $X$ is definable in $K(G).$ 
\end{theorem}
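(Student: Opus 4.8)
The plan is to derive the definability of the set of geodesics directly from Theorem \ref{th:metric}, which gives us that the metric space $Met(G,X) = \langle G, \N, d_X\rangle$ is definable in $K(G)$. First I would fix the standard encoding: a geodesic from the identity to an element $g \in G$ is a word $w = x_{i_1}^{e_1}\cdots x_{i_k}^{e_k}$ with $e_j \in \{\pm 1\}$, $x_{i_j} \in X$, evaluating to $g$ in $G$, and having $k = d_X(1,g)$. Since $G$ is a limit group, it is either non-abelian (hence contains a free subgroup $F$, and Theorem \ref{th:bi1} applies) or abelian (Theorem \ref{th:HF-group} and Lemma \ref{th:HF-group2} apply); in both cases we have a definable copy of $F$ (or of the generating monoid) inside $K(G)$ together with a definable isomorphism onto its interpreted image, and, crucially, a definable copy of $S(K,\N)$, so sequences of group elements of unbounded length are available as first-order objects.

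The key steps, in order, are as follows. (1) Use Theorem \ref{th:bi1} (respectively Theorem \ref{th:HF-group}, Lemma \ref{th:HF-group2}) to obtain a definable interpretation in $K(G)$ of the set $S(\Z,\N)$, inside which each reduced word over $X^{\pm 1}$ is coded by a tuple $t = (e_1 i_1, \ldots, e_k i_k)$ as in the construction preceding Lemma \ref{le:M-X-c}; let $T$ denote this definable set of tuples. (2) Define the evaluation map $\mathrm{ev}: T \to G$, sending a tuple to the product of the corresponding generators: this is exactly the map $t \mapsto M_t$ whose graph is definable by the formula $\phi_5(t,u,P,X)$ of Lemma \ref{le:M-X-c} in the non-abelian case, and by the analogous abelian construction otherwise. (3) Observe that the length function $\ell: T \to \N$ (the number of entries of the tuple) is definable in $S(\Z,\N)$, hence in $K(G)$. (4) Now a tuple $t \in T$ is a geodesic for its endpoint precisely when $\ell(t) = d_X(1, \mathrm{ev}(t))$, and by Theorem \ref{th:metric} the function $d_X$ is definable in $K(G)$; equivalently, $t$ is geodesic iff $\ell(t) \le \ell(s)$ for every $s \in T$ with $\mathrm{ev}(s) = \mathrm{ev}(t)$, which is a first-order condition over the definable structure $\langle T, \mathrm{ev}, \ell, G, d_X\rangle$. (5) Conclude that the set of geodesics — either as the subset of $T$ of length-minimal tuples, or, after applying a definable shift, as the set of geodesic words between arbitrary pairs $(g,h) \in G \times G$ — is definable in $K(G)$.

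The main obstacle, such as it is, is bookkeeping rather than a genuine mathematical difficulty: one must make sure the encoding $T \subseteq S(\Z,\N)$ of reduced words and the evaluation map $\mathrm{ev}$ are set up so that \emph{all} words of a given endpoint, including non-reduced ones if one wants true geodesic \emph{paths} in $Cay(G,X)$ rather than reduced representatives, are captured — but since geodesics are automatically reduced words, restricting to $T$ (reduced tuples) loses nothing, and the reduction of a concatenation was already handled in the interpretation of $F$ in $S(\Z,\N)$ described before Lemma \ref{le:M-X-c}. A secondary point is that in the abelian case $F$ is replaced by a free abelian group of finite rank and one must use the word metric on $\Z^r$ with respect to the standard basis, whose geodesics are again encodable as tuples with the evaluation map definable via Lemma \ref{th:HF-group2}; this is routine. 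Thus the theorem follows by assembling Theorems \ref{th:metric}, \ref{th:bi1} and \ref{th:HF-group}, together with Lemmas \ref{le:M-X-c} and \ref{th:HF-group2}, with no new ideas required beyond expressing ``length-minimal representative'' as a first-order formula.
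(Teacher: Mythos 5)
Your proposal is correct and follows essentially the same route as the paper: the paper simply remarks that Theorem \ref{th:metric} implies Theorem \ref{th:geom}, and the mechanism it has in mind is exactly the one you spell out — via the bi-interpretation of Theorem \ref{th:bi1} one has a definable set of tuples, a definable evaluation map, a definable length function and a definable metric, and geodesics are the tuples whose length equals the metric distance to their endpoint. You merely make explicit the bookkeeping that the paper leaves implicit.
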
 
\begin{theorem}\label{th:geom1} Let $G$ be a non-abelian limit group with  a finite generating set $X$, $K$ an infinite field. 

1. Given a number $\delta$, there exists a formula $Con_{\delta }(y_1,y_n)$ such that for any elements $h_1,\ldots  h_n$ 
$K(G)\models Con_{\delta }(h_1,\ldots ,h_n)$  if and only if the subgroup generated by $h_1,\ldots ,h_n$ in $G$ is $\delta$-quasi-convex.

2. Given a number $\delta$, there exists a formula $Hyp_{\delta }(y_1,\ldots ,y_n)$ such that for any elements $h_1,\ldots  h_n$ 
$K(G)\models Hyp_{\delta }(h_1,\ldots ,h_n)$  if and only if the subgroup generated by $h_1,\ldots ,h_n$ in $G$ is $\delta$-hyperbolic.

3. For any word $w(y_1,\ldots ,y_m)$ there exists a formula $\phi_w(y, X)$ that defines in $K(G)$ the verbal subgroup of $G$ coresponding to 
$w(y_1,\ldots ,y_m)$. 
A free finite rank group in a variety defined by $w(y_1,\ldots ,y_m)$ is interpretable in $K(G)$.

4. There exists a formula $Mal (y_1, \ldots y_n)$ such that for any elements $h_1,\ldots  h_n$ 
$K(G)\models Mal (h_1,\ldots ,h_n)$  if and only if the subgroup generated by $h_1,\ldots ,h_n$ in $G$ is malnormal. 

\end{theorem}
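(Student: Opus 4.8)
The plan is to deduce all four items from Theorem~\ref{th:metric} together with the bi-interpretability of Theorem~\ref{th:bi1}: in $K(G)$ one then has $0$-definable copies of the group $G$ (with its multiplication and $1$), of the arithmetic $\N$, of the list superstructure $S(G,\N)$, of the word metric $d_X$, and, by Theorem~\ref{th:geom}, of the set of geodesics of $Cay(G,X)$. The one extra ingredient I would first establish is that, for fixed parameters $h_1,\dots,h_n\in G$, the ternary relation ``$g\in G$ is the value in $h_1,\dots,h_n$ of the word coded by the tuple $\bar\imath\in S(\Z)$'' is definable in $K(G)$; this is just the recursive evaluation $p_0=1$, $p_{l+1}=p_l\cdot h_{i_l}^{\,\varepsilon_l}$, $p_{|\bar\imath|}=g$ carried out along $\bar\imath$ inside the interpreted copy of $S(G,\N)$. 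As immediate consequences, the predicate $g\in H:=\langle h_1,\dots,h_n\rangle$ and, for $g,h\in H$, the intrinsic word length $|g|_Y$ and word metric $d_Y(g,h)$ with respect to $Y=\{h_1,\dots,h_n\}$ (the minimal length of a tuple coding $g$, resp.\ $g^{-1}h$, in the $h_i^{\pm1}$) become definable in $K(G)$ with parameters $h_1,\dots,h_n$.

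Once this is in hand, item~1 is a direct transcription: I would let $Con_\delta(y_1,\dots,y_n)$ say that for every $g\in H$, every geodesic $\gamma$ from $1$ to $g$ in $Cay(G,X)$ and every vertex $p$ of $\gamma$ (an initial segment of $\gamma$ evaluated in $G$), there is $h\in H$ with $d_X(p,h)\le\delta$ --- quantification over geodesics and their vertices is legitimate since they are coded by tuples in the interpreted list structure, and the bound is a first-order condition on the definable metric. For item~2, ``$\delta$-hyperbolic'' is read as hyperbolicity of $Cay(H,Y)$; a geodesic in it between $g,g'\in H$ is a tuple $p_0,\dots,p_k$ of elements of $H$ with $p_0=g$, $p_k=g'$, consecutive entries differing by a generator $h_j^{\pm1}$ and $k=d_Y(g,g')$, all definable by the previous paragraph, so $Hyp_\delta(y_1,\dots,y_n)$ asserts that for all $g_1,g_2,g_3\in H$ and all choices of geodesic sides between the three pairs, every vertex of the first side lies at $d_Y$-distance $\le\delta$ from a vertex of one of the other two.

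For item~3 I would note that for the fixed word $w(y_1,\dots,y_m)$ the value $w(g_1,\dots,g_m)$ is a fixed group term, so the set $V_w$ of $w$-values is definable in $G$; the verbal subgroup $w(G)=\langle V_w\rangle$ is then defined by ``$z$ is a product $v_1\cdots v_k$ of a tuple $(v_1,\dots,v_k)\in S(G)$ with each $v_l\in V_w\cup V_w^{-1}$'', which gives $\phi_w(y,X)$. For the relatively free group, I would take a free factor $F=F(x_1,\dots,x_r)$ of the fixed free subgroup of $G$ (definable, since membership in $\langle x_1,\dots,x_r\rangle$ is expressible as above), observe that $w(F)$ is definable by restricting $\phi_w$ to $F$, and interpret $F/w(F)$ as $F$ modulo the definable congruence $g\sim g'\iff g^{-1}g'\in w(F)$, with multiplication induced from $F$. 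Item~4 is similar: since $G$ is torsion-free, malnormality of $H$ means $gHg^{-1}\cap H=\{1\}$ for $g\notin H$, so I would let $Mal(y_1,\dots,y_n)$ say that for all $g,h\in G$ with $h\in H$, $h\neq1$ and $ghg^{-1}\in H$ one has $g\in H$ --- a formula over $K(G)$ because membership in $H$ is definable with parameters $h_1,\dots,h_n$.

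The hard part is the first paragraph, i.e.\ making ``$g$ is the value of the tuple-coded word in $h_1,\dots,h_n$'', and hence subgroup membership and the intrinsic metric $d_Y$, \emph{uniformly} definable; this is exactly where the strength of the bi-interpretation with $S(K,\N)$ is needed --- the interpretability of the list superstructure over $G$ and of enough arithmetic to run a recursion of unbounded length along a tuple. After that, items~1--4 are each a routine first-order rendering of the relevant definition, the only care being to quantify over geodesics, vertices and defining tuples inside the interpreted structures rather than in the group language of $G$ alone.
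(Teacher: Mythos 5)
Your proposal is correct and supplies exactly the details the paper leaves implicit: the paper proves Theorem~\ref{th:geom1} only by the remark that Theorem~\ref{th:metric} implies it, and your argument is the natural unwinding of that — definable copies of $G$, $\N$, $S(G,\N)$, the metric $d_X$ and the geodesics, together with definable subgroup membership and the intrinsic metric $d_Y$, after which clauses 1--4 are routine first-order transcriptions. The one sub-step where you diverge is subgroup membership: the paper (Theorem~\ref{th:subring}) gets it by noting the relation is recursively enumerable, hence Diophantine in $\N$ by Matiyasevich, and transferring via the bi-interpretation, whereas you encode the recursion $p_0=1$, $p_{l+1}=p_l h_{i_l}^{\varepsilon_l}$ directly as a tuple in the interpreted $S(G,\N)$; both are valid and rely on the same bi-interpretation, your version being a bit more hands-on and the paper's a bit more systematic since it uniformly covers the subring/ideal/submonoid clauses of Theorem~\ref{th:subring} as well.
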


\subsection{Definability of subrings and submonoids}

We use notation above and the following one. For elements $f_1, \ldots,f_n, h \in K(G)$ by $ring(f_1, \ldots,f_n)$ we denote the subring of $K(G)$ generated by $f_1, \ldots,f_n$, by $id(f_1, \ldots,f_n)$ we denote the ideal  of $K(G)$ generated by $f_1, \ldots,f_n$, by  $mon(f_1, \ldots,f_n)$ the submonoid (with respect to the multiplication in $K(G)$)  of $K(G)$ generated by $f_1, \ldots,f_n$, and finally for $f_1, \ldots,f_n \in G$ by $gp(f_1, \ldots,f_n)$  we denote  the subgroup  of $G$ generated by $f_1, \ldots,f_n$.

\begin{theorem} \label{th:subring} 
Let $G$ be a non-abelian limit group with  a finite generating set $X$, $K$ an infinite field. Then the following hold for any $n \in \N$:
\begin{enumerate}
\item there is a formula $Group_n(h,f_1, \ldots,f_n, X)$ with parameters $X$ (uniformly in $X$) such that for any elements $f_1, \ldots,f_n, h \in G$   
$$K(G) \models Group_n(h,f_1, \ldots, f_n,X) \Longleftrightarrow h \in gp(f_1, \ldots, f_n);$$

\item there is a formula $Mon_n(h,f_1, \ldots,f_n, X)$ with parameters $X$ (uniformly in $X$) such that for any elements $f_1, \ldots,f_n, h \in G$   
$$K(G) \models Mon_n(h,f_1, \ldots, f_n,X) \Longleftrightarrow h \in mon(f_1, \ldots, f_n);$$

 If $K$ is interpretable in $\N$, then  elements $f_1, \ldots,f_n$ in the above statements can be also  taken in $K(G)$. Moreover,
\item  there is a formula $Ring_n(h,f_1, \ldots,f_n, X)$ with parameters $X$ (uniformly in $X$) such that for any elements $f_1, \ldots,f_n, h \in K(G)$  
$$K(G) \models Ring_n(h,f_1, \ldots, f_n,X) \Longleftrightarrow h \in ring(f_1, \ldots, f_n);$$
\item  there is a formula $Ideal_n(y,y_1, \ldots,f_n, X)$ with parameters $X$ (uniformly in $X$) such that for any elements $f_1, \ldots,f_n, h \in K(G)$   
$$K(G) \models Ideal_n(h,f_1, \ldots, f_n,X) \Longleftrightarrow h \in id(f_1, \ldots, f_n).$$

\end{enumerate}
\end{theorem}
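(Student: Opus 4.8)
The plan is to leverage the bi-interpretability of $S(K,\N)$ and $K(G)$ established in Theorem \ref{th:bi1} (together with the definable isomorphisms $K(G) \to K(G)^{\diamond\diamond}$ and the interpretation of $G$ in $S(\Z,\N)$ as $\MM_{X,P}$), and then reduce each of the four membership problems to an arithmetically/recursively definable predicate on the list superstructure. First I would recall that, by Theorem \ref{th:bi1}, there is a formula $\phi_5$ (as in Lemma \ref{le:M-X-c}, extended to limit groups via Lemma \ref{le:limit}) realizing a definable isomorphism between $F(X) \le G$ and $\MM_{X,P}$, and that an arbitrary element $f = \sum_{i=1}^s \alpha_i M_i \in K(G)$ corresponds under the interpretation $K(G)^{\diamond}$ to a pair $q_f = (\bar\alpha, \bar t) \in S(S(K,\N),\N)$, which by Corollary \ref{c2} lives in $S(K,\N)$, itself $0$-interpretable in $K(G)$. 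So every element of $K(G)$ has a ``code'' that is uniformly definable inside $K(G)$ with parameters $X$.

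\medskip

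For parts (1) and (2) — the subgroup and submonoid cases with $f_1, \ldots, f_n, h \in G$ — I would argue as follows. Membership $h \in gp(f_1, \ldots, f_n)$ is equivalent to the existence of a finite word over $\{f_1^{\pm 1}, \ldots, f_n^{\pm 1}\}$ evaluating to $h$; membership $h \in mon(f_1, \ldots, f_n)$ is the analogous statement with only positive powers. Using the interpretation of $G$ in $S(\Z,\N)$, each $f_i$ is a tuple $t_{f_i}$ and $h$ is a tuple $t_h$; the condition becomes: there exists a finite sequence of indices $(j_1, e_1), \ldots, (j_\ell, e_\ell)$ (with $e_k = \pm 1$ in the group case, $e_k = 1$ in the monoid case, $1 \le j_k \le n$) such that the tuple obtained by concatenating $t_{f_{j_1}}^{e_1} \cdots t_{f_{j_\ell}}^{e_\ell}$ and performing free reductions equals $t_h$. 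Concatenation and free reduction of tuples are computable operations, hence $0$-definable in $\N$ and therefore definable in $S(\Z,\N)$; the outer existential quantifier over finite index sequences is a single quantifier over $S(\N)$. Pulling this back through the interpretation of $S(K,\N)$ in $K(G)$ via Lemma \ref{Hodges} gives the formulas $Group_n$ and $Mon_n$, uniform in $X$.

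\medskip

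For parts (3) and (4) — the subring and ideal cases, with $f_1, \ldots, f_n, h \in K(G)$ and the additional hypothesis that $K$ is interpretable in $\N$ — the additional hypothesis is what makes $S(K,\N)$ (hence all of the coding machinery) interpretable in $\N$ itself, so the word-combinatorial conditions below become genuinely recursive predicates over $\N$ and thus $0$-definable. Membership $h \in ring(f_1, \ldots, f_n)$ is the existence of a finite $K$-linear combination of finite products of the $f_i$'s equal to $h$; membership $h \in id(f_1, \ldots, f_n)$ is the existence of finitely many $p_k, r_k \in K(G)$ with $h = \sum_k p_k f_{i_k} r_k$. In the coded picture each element $f_i$, $h$, and the auxiliary $p_k, r_k$ is a pair $(\bar\alpha, \bar t)$ whose entries are codes from $K$ (interpreted in $\N$) and tuples from $\Z$; the ring operations $+, \cdot$ on $K(G)$ were shown (in the proof of Theorem \ref{th:HF-group}, cf. the free associative algebra case of \cite{assoc}, Theorem 2) to be definable on these codes, and the arithmetic of assembling a bounded-length sum of bounded-length products is again computable, hence definable in $S(K,\N)$; quantifying over the (finitely many, but a priori unboundedly many) multipliers is a quantifier over $S(S(K,\N),\N) = S(K,\N)$. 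Again Lemma \ref{Hodges} transports these to formulas $Ring_n$ and $Ideal_n$ in $K(G)$ with parameters $X$.

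\medskip

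The main obstacle I expect is not the existence of the arithmetic predicates — those are routine once one trusts the coding — but the \emph{uniformity in $X$} and the care needed in the ideal case, where the multipliers $p_k, r_k$ range over all of $K(G)$ and one must ensure the search can be bounded by a \emph{single} block of existential quantifiers over $S(K,\N)$ rather than an unbounded quantifier alternation; concretely, one encodes the entire finite list $(p_1, r_1, \ldots, p_m, r_m)$ as one element of the list superstructure and expresses ``$h = \sum_k p_k f_{i_k} r_k$'' as a single definable relation among codes. A secondary technical point is verifying that the definable isomorphism $K(G) \to K(G)^{\diamond\diamond}$ of Theorem \ref{th:bi1} intertwines the native ring operations with their coded counterparts, so that ``$h$ lies in the coded subring'' really does pull back to ``$h$ lies in $ring(f_1,\ldots,f_n)$''; this is where the definability of the bi-interpretation isomorphism (Lemmas \ref{le:bi-int-iso} and the one following it) is essential.
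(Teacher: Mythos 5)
Your proposal follows essentially the same route as the paper: both exploit the bi-interpretability chain $K(G)\leftrightarrow S(K,\N)\leftrightarrow\N$ (Theorem \ref{th:bi1}, Lemma \ref{le:list-superstructure}), observe that the relevant membership predicates are recursively enumerable hence Diophantine in $\N$ by Matiyasevich, and pull back via Lemma \ref{Hodges} and the definable isomorphism $G\to G^{\diamond\diamond}$. One small inaccuracy: in a general non-abelian limit group normal forms are not reached by free reduction, so the ``concatenate and freely reduce'' description should instead appeal to decidability of the word problem and the geodesic encoding used in Theorem \ref{th:bi1}; this does not affect the r.e.\ status of the predicate, so the argument stands.
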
 
\begin{proof}
We will show how to prove the first statement. We have a series of interpretations (by lemma \ref{le:list-superstructure} and Theorem \ref{th:bi1})
$$G\rightarrow _{int}S(\Z,\N) \rightarrow _{bi-int} \N \rightarrow _{bi-int}S(\Z,\N) \rightarrow _{int} K(G)$$The subgroup  $gp(f_1, \ldots, f_n)$ is recursively enumerable. Therefore the set of all tuples $W=(f_1,\ldots ,f_n,h)$ such that  $h\in gp(f_1,...,f_n) $ is also recursively enumerable. Therefore the  image of it $W^{\diamond}=\{(t_1,\ldots t_n,t)\}$ in $S(\Z,\N)$ and  the image $W^{\diamond\diamond}=\{(s_1,\ldots s_n,s)\}\subseteq \N^{n+1}$ is also recursively enumerable. This image is a recursively enumerable set of tuples of integers . In $\N$ every recursively enumerable relation  is diophantine. Namely, by  \cite{Mat}
we have  

$$ (s_1,\ldots s_n,s)\in W^{\diamond\diamond} \iff \exists (x_1,...x_m) \ P(s_1,...s_n,s,x_1,...,x_m)=0,$$ where $P(s_1,...s_n,s,x_1,...,x_m)$ is a polynomial with integer coefficients. Therefore the set  $W^{\diamond\diamond}$ is defined by a formula in $\N$. Since we have bi-interpretation of $\N$ with $S(\Z,\N)$,  the corresponding set $W^{\diamond}$ in $S(\Z,\N)$ is also defined by a formula that we denote $\psi(t_1,\ldots t_n,t).$ Let $\bar t_1,\ldots ,\bar t_n,\bar h$ be the images of $t_1,\ldots ,t_n,t$  when we interpret $S(\Z,\N)$ in $K(G).$

By Lemma \ref{Hodges},  there is a formula $\psi ^{*}(\bar t_1,\ldots ,\bar t_n,\bar h)$ in the language of $K(G)$ such that 
 $S(\Z,\N) \models \psi(t_1,\ldots t_n,t)$ if and only if $K(G)
\models \psi^*(\bar t_1,\ldots \bar t_n,\bar t).$
By Theorem \ref{th:bi1}, there is a formula $\phi (g,\bar t)$   defining the isomorphism from $G$ to its image in $K(G)$  obtained in a series of interpretations 
above. Let $$\Phi (f_1,\ldots ,f_n,h)=\exists \bar t_1,\ldots ,\bar t_n,\bar t (\psi ^{*}(\bar t_1,\ldots ,\bar t_n,\bar t) \wedge _{i=1}^n \phi (f_i,\bar t_i)\wedge \phi (h,\bar t))$$  

Then $S(\Z,\N)\models
\psi (t_1,...,t_n,t)$ if and only if  $K(G)\models \Phi (f_1,...,f_n,h)$ if and only if $h \in gp(f_1,...,f_n)$, so we have
our result.

The other three statements of the theorem can be proved using similar argument because the structures $S(K,\N)$ and $K(G)$ are bi-interpretable by Theorem \ref{th:bi1} and $K$ is interpretable in $\N$ (so $S(K,\N)$ is interpretable in $\N$).

\end{proof}

\subsection{Rings elementarily equivalent to group algebras of free and limit groups}

 \begin{theorem} \label{th:eeq} Let $F$ be a finitely generated free group and $K$ an infinite field. Let  $H$ be a group, $L$  be a field, such that there is an element in $H$ with finitely generated centralizer  in $L(H)$. Then $K(F) \equiv L(H)$ if and only if 
 \begin{itemize}
 \item  $H$ is isomorphic to $F$, 
 \item $HF(K) \equiv HF(L)$.
 \end{itemize}
\end{theorem}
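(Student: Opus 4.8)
### Proof plan for Theorem \ref{th:eeq}

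The plan is to derive this theorem from the bi-interpretability result (Theorem \ref{th:bi}) together with the structural results already established for $K(F)$. The reverse implication is the easy direction: if $H \cong F$ then $L(H) \cong L(F)$ as rings, and by Theorem \ref{th:bi} (applied in both $K$ and $L$) the ring $K(F)$ is bi-interpretable with $S(K,\N)$ and $L(F)$ is bi-interpretable with $S(L,\N)$, uniformly in the field. Since the interpreting formulas are the same, $HF(K) \equiv HF(L)$ (equivalently $S(K,\N) \equiv S(L,\N)$) implies $K(F) \equiv L(F) = L(H)$ by pushing sentences through the interpretation via Lemma \ref{Hodges}.

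For the forward implication, suppose $K(F) \equiv L(H)$. First I would observe that $F$, being a finitely generated free group, is LO, so by Theorem \ref{th:lo} (or its hyperbolic refinement, Theorem \ref{th:hyp}) we already get $G \equiv H$ where $G = F$, and $HF(K) \equiv HF(L)$ — but we need the stronger conclusion that $H$ is actually \emph{isomorphic} to $F$, not merely elementarily equivalent. This is where bi-interpretability does the real work. The hypothesis that some element of $H$ has a finitely generated centralizer in $L(H)$ is exactly what is needed to run Proposition \ref{th:S(F,N)-non-comm}-style arguments in $L(H)$: a finitely generated centralizer is (generically) a ring of Laurent polynomials in finitely many variables, in which $S(L,\N)$ is interpretable by Lemma \ref{th:HF-group1}. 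So $S(L,\N)$ is interpretable in $L(H)$ by the \emph{same} formulas that interpret $S(K,\N)$ in $K(F)$, and conversely one interprets $L(H)$ back inside $S(L,\N)$; since $K(F)$ is bi-interpretable with $S(K,\N)$ and the whole package is first-order, $L(H)$ is bi-interpretable with $S(L,\N)$ as well.

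Now I would extract the group. Inside $K(F)$ we have a $0$-definable copy of $F$ (Theorem \ref{th:bases}, Lemma \ref{le:M-X-c}), defined by formulas $\phi$ that do not mention any parameters beyond what is $0$-definable. These same formulas define, inside $L(H)$, a group $\tilde F$ which is a finitely generated free group of the same rank as $F$ (being the interpreted structure of a sentence true in $K(F) \equiv L(H)$). The content of the argument is then to identify $\tilde F$ with $H$ itself: one shows that the $0$-definable subgroup of units-modulo-scalars of $L(H)$ recovers $H$ (as in the proof of Theorem \ref{th:lo}, using Lemma \ref{le:1+h} and torsion-freeness coming from the no-zero-divisors sentence), and that the definable isomorphism $K(F) \to K(F)^{\diamond\diamond}$ transfers to a definable isomorphism $L(H) \to L(H)^{\diamond\diamond}$ which, restricted to the definable group, yields $H \cong \tilde F \cong F$. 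Finally $HF(K) \equiv HF(L)$ follows because $S(K,\N)$ and $S(L,\N)$ are respectively bi-interpretable with $K(F)$ and $L(H)$ by the same formulas, so $K(F) \equiv L(H)$ pushes through to $S(K,\N) \equiv S(L,\N)$, i.e. $HF(K) \equiv HF(L)$.

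The main obstacle I anticipate is the step that upgrades ``$H$ elementarily equivalent to $F$'' to ``$H$ isomorphic to $F$'': this genuinely uses that $F$ is \emph{finitely generated free} (so that its copy inside $K(F)$ is $0$-definable with a definable self-isomorphism making $K(F)$ \emph{rigid enough} under bi-interpretation), and one must check carefully that the finitely-generated-centralizer hypothesis on $H$ is sufficient to reconstruct the ``arithmetic part'' $S(L,\N)$ inside $L(H)$ by the transported formulas even though $H$ is an arbitrary group — in particular that the relevant centralizer in $L(H)$ is indeed a Laurent polynomial ring (this requires knowing $L(H)$ has no zero divisors and that the centralizer is free abelian, which should follow from elementary equivalence with $K(F)$ plus Lemma \ref{pr:centralizer-hyp}-type reasoning and Lemma \ref{le:units}).
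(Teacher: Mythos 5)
Your plan follows essentially the same route as the paper: the reverse implication is pushed through the bi-interpretation via Lemma~\ref{Hodges}; the forward implication applies Theorem~\ref{th:hyp}, transfers the interpreting formulas from $K(F)$ to $L(H)$, and upgrades $H\equiv F$ to $H\cong F$ using the finitely generated centralizer hypothesis. Two steps in your sketch, however, are imprecise and need to be tightened before the argument closes.

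First, the statement that ``these same formulas define, inside $L(H)$, a group $\tilde F$ which is a finitely generated free group of the same rank as $F$ (being the interpreted structure of a sentence true in $K(F)\equiv L(H)$)'' does not work as written: being a free group of a given finite rank is not a first-order property, so it cannot be carried over by elementary equivalence alone. The paper's route is instead to transfer the formulas of Lemma~\ref{le:M-X-c} that define, in $K(F)$, an isomorphism between the group of trivial units modulo scalars (which is $F$) and the free group $\MM_{X,P}$ interpreted in the copy of $S(\Z,\N)$ inside $K(F)$. In $L(H)$ the same formulas give a definable isomorphism between $H$ and the group $\MM_{X,P}$ interpreted in the corresponding copy of $S(\Z,\N)$ in $L(H)$, and the latter is a genuine free group of the right rank only because the ``$\N$''-sort of that copy is the \emph{standard} $\N$, not a nonstandard model. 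This is exactly what the centralizer hypothesis buys you; you flag this correctly at the end, but it is the load-bearing step and should be foregrounded rather than left as an afterthought.

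Second, your route for pinning down the centralizer (deduce that $H$ is CT and torsion-free, then invoke Lemma~\ref{pr:centralizer-hyp} to see the ring centralizer is a group algebra of a free abelian group) is more roundabout than what the paper does and still leaves a gap. The paper observes directly that a centralizer $C$ of a nontrivial element of $H$ in $L(H)$ is interpretable in $L(H)$ by the same formula that interprets $K[t,t^{-1}]$ in $K(F)$, hence $C\equiv K[t,t^{-1}]$; the finitely generated hypothesis makes $C$ Noetherian (Hilbert basis theorem), and a Noetherian ring elementarily equivalent to a Laurent polynomial ring over a field is itself isomorphic to $K_1[t,t^{-1}]$ for some field $K_1$. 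The field $K_1$ is identified with $L$ via the first-order description of the scalar field as the invertible $x$ with $x+1$ invertible (together with $0$). Your route needs a further argument that the centralizer has \emph{rank one} (a finitely generated torsion-free abelian group could have higher rank), and that is precisely what $C\equiv K[t,t^{-1}]$ forces; so you end up having to make the paper's comparison anyway. Folding it into the ring-level argument from the outset is cleaner.
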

\begin{proof} Let $B=L(H)$.  Then by Theorem \ref{th:hyp} $H\equiv F$ and $HF(K) \equiv HF(L)$.

All  centralizers of elements from $H$ in $B=L(H)$ are definably isomorphic to each other as rings. Furthermore, the ring isomorphic to such a centralizer,  say $C$ is 0-interpretable in $B$ by the same formulas that the ring of one-variable Laurent polynomials $K[t, t^{-1}]$ is interpretable in $K(F)$, it follows from the properties of 0-interpretations 
 that $C \equiv K[t, t^{-1}]$. Since at least one  centralizer of an element in $H$ in $B$ is Noetherian by the assumption, the ring $C$ is Noetherian.  The ring $C$ is isomorphic to $K_1[t, t^{-1}]$  for some field $K_1$ as a  noetherian ring elementarily equivalent to the ring of Laurent polynomials over a field. Since $K_1$ is the set of all invertible (and $0$) elements $x$ in $C$ such that $x+1$ is also invertible, it follows that $L = K_1$.  We showed that every  centralizer of an element in $H$ in $B$ is isomorphic to $L[t, t^{-1}]$. 

Lemma \ref{natural} gives a uniform interpretation   of arithmetic $\N$ in $B$.

All statements of Proposition  \ref{th:S(F,N)-non-comm} also hold in $B$, and the corresponding interpretations are given precisely by the same formulas as in $K(F)$. This gives  interpretations of $S(L, \N)$ in $B$ which satisfy all the statements of Proposition \ref{th:S(F,N)-non-comm}.

A direct  analog of Lemma \ref{le:M-X-c} and  Lemma \ref{le:bi-int-iso} holds in $B$. Indeed, Lemma \ref{le:M-X-c} tells us that there is a definable isomorphism between the subgroup $H$ generated by some set $X$ in $B$ and the free subgroup $\MM_{X,P}$  canonically interpreted in $S(\Z,\N)$.  Since the analog of Proposition \ref{th:S(F,N)-non-comm} holds in $B$ the same formulas as in Lemma \ref{le:M-X-c}  interpret $S(\Z,\N)$ in $B$. Therefore, the subgroup $H$ in $B$ is definably isomorphic to  the free group $\MM_{X,P}$. In particular,  $H$ is a free group generated by $X$. 

This proves the theorem.  

\end{proof}
Actually a more general result holds.
 \begin{theorem} \label{th:eeq2} Let $F$ be a finitely generated free group and $K$ an infinite field. Let  $B$ be a ring that has a non-central invertible element $x$ with noetherian  centralizer.  If $K(F) \equiv B$ then 
 \begin{itemize}
 \item $B$ is isomorphic to a group algebra $L(H)$,
 \item  $H$ is isomorphic to $F$,
 \item $HF(K) \equiv HF(L)$.
 \end{itemize}
\end{theorem}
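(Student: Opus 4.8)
The plan is to bootstrap Theorem \ref{th:eeq2} from Theorem \ref{th:eeq} by first recovering, purely from the ring-theoretic hypotheses on $B$, enough structure to conclude that $B$ is a group algebra of a free group over a field. Since $K(F)\equiv B$ and $K(F)$ has no zero divisors (being a group algebra of a bi-orderable group), $B$ has no zero divisors. The hypothesis gives a non-central invertible $x\in B$ whose centralizer $C=C_B(x)$ is noetherian. As in the proof of Theorem \ref{th:eeq}, the centralizer of any noncentral element in $K(F)$ is interpretable by fixed formulas as the ring of Laurent polynomials $K[t,t^{-1}]$; applying the same formulas in $B$ (and using that first-order properties transfer) shows $C\equiv K[t,t^{-1}]$, and since $C$ is noetherian, $C\cong L[t,t^{-1}]$ for a field $L$, with $L$ recovered as the set of invertible-or-zero elements $u$ of $C$ such that $u+1$ is also invertible-or-zero. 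In particular $L\equiv K$, and in fact $HF(K)\equiv HF(L)$ by the interpretability of $S(K,\N)$ in $K(F)$ (hence of $HF(K)$, via the known equivalences of the weak second order structures) transferred to $B$.

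Next I would produce the candidate group $H$ inside $B$. Using Proposition \ref{th:S(F,N)-non-comm} and Lemma \ref{le:units} applied with the fixed interpreting formulas, one interprets $S(L,\N)$ (and hence $S(\Z,\N)$, hence $\N$) in $B$, and then runs the bi-interpretation machinery of Theorem \ref{th:bi}: the formulas of Lemma \ref{le:M-X-c} interpret a free group $\MM_{X,P}$ in $S(\Z,\N)_P\subseteq B$, and the analog of Lemma \ref{le:bi-int-iso} identifies an interpreted copy of the ambient ring with $B$ itself via a definable isomorphism. Let $H$ be the subgroup of $B^*$ interpreted by the same formulas that cut out $F$ inside $K(F)$ (i.e. the Diophantine image of Lemma \ref{le:units}, refined by the group-membership formula). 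Because all the defining formulas are first-order and $K(F)\equiv B$, the sentence expressing ``$H$, with the induced operations, is a free group on $X$ and the natural map from the interpreted free group is an isomorphism'' holds in $B$. So $H$ is a free group of the same finite rank as $F$, hence $H\cong F$; and the definable isomorphism $K(F)^{\diamond\diamond}\cong K(F)$ from Theorem \ref{th:bi}, transferred, gives a definable isomorphism of $B$ with the interpreted copy of $L(H)$ built out of $H$ and $L$. Since $L(H)\cong L(F)$ with $L$ a field, this yields $B\cong L(H)$.

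Concretely, the argument is: (1) extract the field $L$ from a noetherian centralizer exactly as in Theorem \ref{th:eeq}; (2) transfer the bi-interpretation of Theorem \ref{th:bi} to $B$ to get an interpreted structure $(H,L)$ together with a definable isomorphism $B\to L(H)$; (3) use the elementary equivalence $K(F)\equiv B$ to force $H$ to be free of the right rank, hence $H\cong F$; (4) read off $HF(K)\equiv HF(L)$ from the interpretation of the weak second order theory. Once $B\cong L(H)$ with $H\cong F$ is established, Theorem \ref{th:eeq} (or even Theorem \ref{th:hyp}) applies directly and finishes the statement.

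The main obstacle is step (2): in $K(F)$ the bi-interpretation is proved using specific algebraic features --- unique factorization in $K(F)$ (Lemma \ref{le:co}), the rigidity lemmas (Lemmas \ref{un}, \ref{le:uni}, \ref{le:uni1}), and the concrete behavior of elements like $1-a_m$ --- and a priori these are not first-order properties, so they need not survive to $B$. The point to check carefully is that everything actually \emph{used} in the construction of the interpreting formulas and of the definable isomorphisms $K(F)^{\diamond\diamond}\cong K(F)$, $S(K,\N)^{\diamond\diamond}\cong S(K,\N)$ is expressed by first-order formulas about $K(F)$ (divisibility relations, invertibility, the membership and length predicates of the list superstructure), so that by $K(F)\equiv B$ the \emph{same} formulas define, in $B$, an isomorphism onto a re-interpreted copy of $B$; the rigidity and factorization facts are only needed to verify, inside $K(F)$, that those formulas do what we claim, and the conclusion ``these formulas define an isomorphism'' is itself first-order and hence transfers. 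Making this transfer rigorous --- i.e. isolating the genuinely first-order content of Sections 4--5 --- is where the real work lies; the rest is a routine repetition of the proof of Theorem \ref{th:eeq}.
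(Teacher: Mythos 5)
Your proposal is essentially correct and follows the approach the paper intends (the paper states Theorem \ref{th:eeq2} immediately after the proof of Theorem \ref{th:eeq} with only the remark that a more general result holds): you supply the missing step, namely that the bi-interpretation of Theorem \ref{th:bi} is implemented by explicit first-order formulas, so the sentence asserting that these formulas define a ring isomorphism onto the re-interpreted group algebra transfers from $K(F)$ to $B$, yielding $B\cong L(H)$ with $H\cong F$. You also correctly identify the one subtlety --- the rigidity and factorization machinery of Section 4 is used only to verify, inside $K(F)$, that the interpreting formulas do what they should, and that verified conclusion is itself a first-order sentence --- which is exactly the observation needed.
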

The following Theorem is a corollary.
 \begin{theorem} \label{th:12} Let $K$ be an infinite field and $F_n$ a free non-abelian group of rank $n$. Then
   $K(F_n) \not  \equiv K(F_m)$ for $n \neq m$.
\end{theorem}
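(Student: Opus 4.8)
The plan is to deduce this directly from Theorem~\ref{th:eeq} by contraposition. Suppose $K(F_n) \equiv K(F_m)$; I want to conclude $n = m$. To apply Theorem~\ref{th:eeq} with the free group $F = F_n$, the infinite field $K$, the group $H = F_m$, and the field $L = K$, the only hypothesis that needs checking is that $F_m$ contains an element whose centralizer in $L(H) = K(F_m)$ is finitely generated. This is immediate: since $m \geq 2$, $F_m$ is a torsion-free hyperbolic group, so for any non-trivial $g \in F_m$ the centralizer $C_{F_m}(g)$ is infinite cyclic, and by the Corollary to Lemma~\ref{pr:centralizer-hyp} the centralizer $C_{K(F_m)}(g)$ is isomorphic to the ring of Laurent polynomials $K[t,t^{-1}]$ in one variable, which is a finitely generated (indeed Noetherian) ring.

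With the hypotheses verified, Theorem~\ref{th:eeq} yields that $K(F_n) \equiv K(F_m)$ forces $F_m \cong F_n$ (together with $HF(K) \equiv HF(K)$, which holds trivially). Passing to abelianizations, $F_n \cong F_m$ implies $\mathbb{Z}^n \cong \mathbb{Z}^m$, hence $n = m$. This is exactly the contrapositive of the assertion $K(F_n) \not\equiv K(F_m)$ for $n \neq m$, so the theorem follows.

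Essentially all of the work has been front-loaded into Theorem~\ref{th:eeq} (and ultimately into the bi-interpretability of $S(K,\N)$ with $K(F)$ established in Theorem~\ref{th:bi}), so there is no substantial obstacle remaining at this stage; the one point requiring a (trivial) argument is the centralizer condition above. Alternatively one could invoke Theorem~\ref{th:eeq2} in place of Theorem~\ref{th:eeq}, taking as the required non-central invertible element with Noetherian centralizer any non-trivial $g \in F_m \subseteq K(F_m)$: it is a unit of $K(F_m)$, it is non-central because $F_m$ is non-abelian, and its centralizer is again $K[t,t^{-1}]$; one then concludes as before that $F_n \cong F_m$ and hence $n=m$.
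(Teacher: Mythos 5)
Your proof is correct and matches the paper's intent exactly: the paper presents Theorem~\ref{th:12} as an immediate corollary of Theorem~\ref{th:eeq2} (equivalently Theorem~\ref{th:eeq}), and your verification that any non-trivial $g\in F_m$ gives a non-central unit with Noetherian centralizer $K[t,t^{-1}]$ in $K(F_m)$, followed by abelianization to extract $n=m$ from $F_n\cong F_m$, is precisely what the paper leaves implicit.
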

Similar results hold for group algebras of limit groups.
 \begin{theorem} \label{th:eeq2} Let $G$ be a non-abelian  limit group and $K$ an infinite field. Let $L$  be a field and  $H$ be a group such that each  non-trivial element in $H$  has finitely generated centralizer  in $L(H)$. Then $K(G) \equiv L(H)$ if and only if 
 \begin{itemize}
 \item  $H$ is isomorphic to $G$, 
 \item $HF(K) \equiv HF(G)$.
 \end{itemize}
\end{theorem}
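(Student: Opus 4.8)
The plan is to run the argument of Theorem~\ref{th:eeq} (the free case) almost verbatim, substituting the bi-interpretation of $S(K,\N)$ with $K(G)$ for limit groups (Theorem~\ref{th:bi1}) for its free-group counterpart, Lemma~\ref{le:limit} for the corresponding normal-form lemma over $K(F)$, and invoking the finite presentability and decidable word problem of limit groups at the one point where ``$\equiv$'' must be upgraded to ``$\cong$'' (I read the right-hand side as $H\cong G$ and $HF(K)\equiv HF(L)$). The backward implication is short: $HF(\cdot)$ and $S(\cdot,\N)$ are uniformly mutually interpretable, so $HF(K)\equiv HF(L)$ is the same as $S(K,\N)\equiv S(L,\N)$; by Theorem~\ref{th:bi1} the algebra $K(G)$ is interpretable in $S(K,\N)$ by formulas not depending on $K$, and the same formulas interpret $L(G)$ in $S(L,\N)$, so $S(K,\N)\equiv S(L,\N)$ forces $K(G)\equiv L(G)=L(H)$.

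For the forward implication put $B=L(H)$ and assume $K(G)\equiv B$. A limit group is bi-orderable, so $K(G)$ has no zero divisors, hence neither does $B$, and $H$ is torsion-free. The formula $\phi(x)=(x=-1)\vee\bigl(x\in B^{*}\wedge (x+1)\in B^{*}\bigr)$ from Theorem~\ref{th:lo} defines in $B$ a central subgroup $U\supseteq L^{*}$ of $B^{*}$; by Lemma~\ref{le:1+h}, $H\cap U=1$, so $H$ embeds into $G_{B}:=B^{*}/U$, which is interpreted in $B$ by the same formulas that interpret $G$ in $K(G)$, whence $G_{B}\equiv G$. Since $G$ is LO, Lemma~\ref{le:equivLO} makes $G_{B}$, and therefore $H$, LO; then $H$ satisfies Kaplansky's unit conjecture, so $B^{*}=L^{*}\times H$, forcing $U=L^{*}$ and $G_{B}=H$. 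Thus $H\equiv G$, and $L$ is recovered as the definable field $U\cup\{0\}$ of $B$. Finally, a parameter-definable centralizer of an element of $H$ in $B$ is elementarily equivalent to $K[t,t^{-1}]$ and Noetherian by hypothesis, hence isomorphic to $L[t,t^{-1}]$ (a Noetherian ring elementarily equivalent to Laurent polynomials over a field), exactly as in Theorem~\ref{th:eeq}; this gives a genuine interpretation of $S(L,\N)$ in $B$, elementarily equivalent to the corresponding one in $K(G)$, so $S(K,\N)\equiv S(L,\N)$, i.e.\ $HF(K)\equiv HF(L)$.

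It remains to pass from $H\equiv G$ to $H\cong G$, which I expect to be the main obstacle, since a finitely generated group elementarily equivalent to a limit group need not be isomorphic to it (e.g.\ $F_{2}\equiv F_{3}$); this step genuinely needs the bi-interpretation with arithmetic. First, Theorem~\ref{th:subring}(1) shows $H=G_{B}$ is finitely generated, as the sentence ``some $s$-tuple generates $G$'' transfers from $K(G)$ to $B$. Fix a finite presentation $G=\langle g_{1},\dots,g_{s}\mid r_{1},\dots,r_{k}\rangle$; as the word problem of $G$ is decidable, the predicate ``the reduced word $w$ is trivial in $G$'' is recursive, hence definable in $\N$ and, through the bi-interpretation of $S(\Z,\N)$ with $\N$ and Theorem~\ref{th:bi1}, in $K(G)$. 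I would then write a sentence $\Theta$ saying that there exist $h_{1},\dots,h_{s}$ in the interpreted copy of $G$ generating it (via $Group_{s}$), with $r_{j}(\bar h)=1$ for all $j$, and such that for every tuple $t$ coding a reduced word $w$ one has $w(\bar h)=1$ iff $w=_{G}1$. Then $\Theta$ holds in $K(G)$ for the standard generators, hence in $B$, where the interpreted copy of $G$ is $G_{B}=H$; restricting the last clause to standard $t$ — which suffices because $\ker(F_{s}\to G)$ and $\ker(F_{s}\to H)$ are subgroups of the standard group $F_{s}$ and are determined by their standard elements — shows the epimorphism $F_{s}\to H$, $x_{i}\mapsto h_{i}$, has the same kernel as $F_{s}\to G$, so $H\cong G$. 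The behaviour of $M_{t}$ on non-standard tuples $t$ (it is still a well-defined element of $G_{B}$ via the recursive word reconstruction of Lemma~\ref{le:M-X-c} and Lemma~\ref{le:limit}) is dealt with just as in the free-group case of Theorem~\ref{th:eeq}, and everything outside this step is a routine transcription of the $K(F)$ arguments.
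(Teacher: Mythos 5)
Your proof is correct and follows the paper's overall strategy, but the passage from $H\equiv G$ to $H\cong G$ is handled by a genuinely different route. You fix a finite presentation of $G$, exploit decidability of its word problem to define the relation ``$w=_G 1$'' inside the bi-interpreted arithmetic, and write a transferable sentence $\Theta$ asserting that some generating tuple of the interpreted group realizes exactly that kernel; restricting to standard codes of reduced words then forces $\ker(F_s\to H)=\ker(F_s\to G)$. The paper instead invokes Theorem~\ref{th:bi1} directly: the formula that defines the canonical isomorphism $G\to G^{\diamond\diamond}$ inside $K(G)$ still defines a bijective group homomorphism when read in $B$, and once the Noetherian hypothesis has forced the centralizers of $B$ to be \emph{isomorphic} (not merely elementarily equivalent) to actual Laurent polynomial rings over $L$, the copy of $S(\Z,\N)$ interpreted inside $B$ is standard, so $G^{\diamond\diamond}_B\cong G$ and $H\cong G$ falls out in one stroke. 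Both routes hinge on the same standardness fact; the paper's is more economical, since the definable isomorphism already packages exactly the information you reconstruct via the presentation and the word-problem predicate. Two small cautions about your version: the correctness of the $Group_s$ formula in $B$ (used to conclude that $H$ is finitely generated and that your $\bar h$ generates $H$) itself presupposes that the interpreted $\N$ in $B$ is standard, so that claim must logically come after, not before, the centralizer/Noetherian step where you establish standardness; and the parameter set $X$ in $Group_s$ must be existentially quantified together with first-order conditions characterizing it, so that the resulting sentence is parameter-free and transfers. With those orderings respected, your argument is sound.
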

\begin{proof} Limit groups are left orderable. Let $B=L(H)$.  Then by Theorem \ref{th:bi1} $H\equiv F$ and $HF(K) \equiv HF(L)$.

All  centralizers of elements from $H$ in $B=L(H)$ are abelian and belong to a finite family of  isomorphism classes. Since they all are noetherian, for each centralizer $C$ of a non-central element in $B$  we have $C \equiv K[T, T^{-1}]$, where $T$ is a finite set of variables.  The ring $C$ is isomorphic to $K_1[T, T^{-1}]$  for some field $K_1$ as a  noetherian ring elementarily equivalent to the ring of Laurent polynomials over a field. Since $K_1$ is the set of all invertible (and $0$) elements $x$ in $C$ such that $x+1$ is also invertible, it follows that $L = K_1$.  We showed that every  centralizer of an element in $H$ in $B$ is isomorphic to $L[T, T^{-1}]$, for some finite set $T$.  By Lemma \ref{le:limit} the ring of Laurent polynomials with one variable is definable in each centralizer, and all the copies of the ring of Laurent polynomials with one variable  inside different centralizers are definably isomorphic. Lemma \ref{natural} gives a uniform interpretation   of arithmetic $\N$ in $B$.

We have interpretations of $S(L, \N)$ in $B$ and  by Theorem \ref{th:bi1}
  there is a definable isomorphism between the subgroup $H$ generated by some set $X$ in $B$ and the limit group $G$   canonically interpreted in $S(L,\N)$.  Since the analog of Proposition \ref{th:S(F,N)-non-comm} holds in $B$, similar  formulas as in Lemma \ref{le:M-X-c}  interpret $S(L,\N)$ in $B$. Therefore, the subgroup $H$ in $B$ is definably isomorphic to  the limit group $G$.
This proves the theorem.  

\end{proof}

 \begin{theorem} \label{th:limit} Let $G$ be a non-abelian  limit group and $K$ an infinite field. Let  $B$ be a ring such that all non-central invertible elements in $B$ have  noetherian centralizers. If $K(F) \equiv B$ then 
 \begin{itemize}
 \item $B$ is isomorphic to a group algebra $L(H)$,
 \item  $H$ is isomorphic to $F$,
 \item $HF(K) \equiv HF(L)$.
 \end{itemize}
\end{theorem}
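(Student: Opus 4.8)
The plan is to run the same bi-interpretation machinery behind Theorems \ref{th:bi1} and \ref{th:eeq2}, but now starting only from the abstract ring $B\equiv K(G)$ together with the centralizer hypothesis; the point is that the bi-interpretation of $S(L,\N)$ with a group algebra is rigid enough to reconstruct the group-algebra structure up to isomorphism. (In the displayed statement one should of course read $K(G)\equiv B$ and $H\cong G$, in parallel with Theorem \ref{th:eeq2}.) Once we have shown $B\cong L(H)$ for a field $L$ and a group $H$, the remaining conclusions $H\cong G$ and $HF(K)\equiv HF(L)$ either fall out of the reconstruction directly or follow from Theorem \ref{th:eeq2} applied to $K(G)\equiv L(H)$ (note that, $H\cong G$ being a limit group and hence LO, $L(H)$ satisfies Kaplansky's unit conjecture, so every non-trivial $h\in H$ has centralizer $L(C_H(h))\cong L[T,T^{-1}]$ with $T$ finite, which is the hypothesis of Theorem \ref{th:eeq2}).

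First I would recover the coefficient field. Since $G$ is a non-abelian limit group it is LO with trivial centre, so $K(G)$ is a domain satisfying Kaplansky's unit conjecture; being a domain is first-order, so $B$ is a domain. The non-central invertibles of $K(G)$ are the $\alpha g$ ($\alpha\in K^\ast$, $1\neq g\in G$), with centralizers $K(C_G(g))\cong K[T,T^{-1}]$, $T$ finite (Lemma \ref{pr:centralizer-hyp}); these lie in finitely many isomorphism classes, all noetherian. By hypothesis each non-central invertible $x\in B$ has noetherian centralizer $C_x$, and by elementary equivalence $C_x$ is elementarily equivalent to a Laurent polynomial ring over a field; as in the proof of Theorem \ref{th:eeq2}, a noetherian ring with this property is isomorphic to $L_x[T_x,T_x^{-1}]$ for a field $L_x$ and finite $T_x$. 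Since the field is recovered inside $C_x$, and globally inside $B$, by the units-and-$+1$ formula of Lemma \ref{le:units}/Theorem \ref{th:lo}, the $L_x$ is a single field $L$, and by (the transfer of) Lemma \ref{le:limit1} a one-variable Laurent subring $L[t,t^{-1}]$ is definable in each $C_x$, the copies being definably isomorphic.

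The crux — the place where the noetherian hypothesis is essential — is that these centralizers are honestly isomorphic to Laurent polynomial rings, not merely elementarily equivalent to them, so the interpreted arithmetic is standard: inside $L[t,t^{-1}]$ the definable set $A=\{t^{n}:n\in\Z\}$ of Lemma \ref{le:limit1} is literally an infinite cyclic group, whence the copies of $\N$, of $S(\Z,\N)$ and of $S(L,\N)$ that the formulas used for $K(G)$ produce in $B$ (Lemmas \ref{natural}, \ref{th:HF-group1}, Proposition \ref{th:S(F,N)-non-comm}) are the genuine structures; in particular $S(K,\N)\equiv S(L,\N)$, hence $HF(K)\equiv HF(L)$ by mutual interpretability. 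Next, each statement in Proposition \ref{th:S(F,N)-non-comm}, Lemma \ref{le:M-X-c}, Lemma \ref{le:bi-int-iso} and Theorem \ref{th:bi1} of the form ``this explicit formula defines an isomorphism between a definable structure and an interpreted structure'' is a first-order sentence true in $K(G)$, hence true in $B$. So in $B$ the same formulas interpret $S(L,\N)$; the limit group $G$ is interpreted in $S(\Z,\N)$ as $G^{\diamond}$ (Theorem \ref{th:bi1}, via a recursive enumeration of geodesics, Lemma \ref{le:limit}); there is a definable isomorphism from the definable subgroup $H\le B$ — the units of $B$ modulo the central units, interpreted exactly as $G$ is in $K(G)$, see the proof of Lemma \ref{le:units} — onto $G^{\diamond}$, which is $\cong G$ over the genuine $S(\Z,\N)$, so $H\cong G$; and, writing $B^{\diamond}$ for the group algebra of $G^{\diamond}$ over $L$ built in $S(L,\N)$ by the $K(F)^{\diamond}$-formulas and $B^{\diamond\diamond}$ for its re-interpretation in $B$, the transfer of Lemma \ref{le:bi-int-iso} gives a definable isomorphism $B\cong B^{\diamond\diamond}\cong B^{\diamond}\cong L(G^{\diamond})\cong L(G)$. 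Thus $B\cong L(H)$ with $H\cong G$, and all three bullets hold.

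I expect the genuine work to be the bookkeeping in the last paragraph together with the standardness point: verifying that the explicit $K(F)$-formulas really interpret $S(L,\N)$ in $B$ with an honest field $L$ and an honest $\N$ (elementary equivalence alone would permit nonstandard companions), and that the double-interpretation isomorphisms $K(G)\to K(G)^{\diamond\diamond}$ and $G\to G^{\diamond\diamond}$ transfer to $B$. It is precisely the noetherianity of the centralizers — forcing honest isomorphisms $C_x\cong L[T_x,T_x^{-1}]$ — that secures standardness; the remainder is a faithful rerun of the arguments already carried out for $K(F)$ and $K(G)$.
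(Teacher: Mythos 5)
Your proposal is correct and follows the route the paper clearly intends. The paper in fact states Theorem \ref{th:limit} without an explicit proof, leaving it as the evident generalization of Theorem \ref{th:eeq2} (limit-group version, where $B=L(H)$ is already a group algebra) in exactly the same way Theorem \ref{th:eeq2} (free-group, arbitrary ring $B$) generalizes Theorem \ref{th:eeq}. You have correctly supplied the two missing pieces: (a) recognizing that the noetherian-centralizer hypothesis forces the centralizers in $B$ to be \emph{isomorphic} (not merely elementarily equivalent) to Laurent polynomial rings $L[T,T^{-1}]$ over a genuine field $L$, so that the interpreted $\N$ and hence $S(L,\N)$ inside $B$ are standard; and (b) using the transferred bi-interpretation isomorphism $B\to B^{\diamond\diamond}$ together with standardness of $S(L,\N)$ to conclude $B\cong B^{\diamond\diamond}\cong L(G^{\diamond})\cong L(G)$, which is exactly where the group-algebra structure of $B$ is reconstructed. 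You also correctly flag the typos ($K(G)\equiv B$, $H\cong G$). One small point worth making explicit in a write-up: to pass from $K(G)\equiv B$ to ``every non-central invertible $x\in B$ has centralizer elementarily equivalent to some $K[T,T^{-1}]$,'' one should note that the (finitely many) possible centralizer-ranks in $K(G)$ are distinguished by first-order formulas $\rho_r(x)$, and $\theta(x)\to\bigvee_r\rho_r(x)$ transfers; the rest of your argument then goes through.
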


\section{Quantifier elimination}\label{elimination}

In this section we study an important model theoretic property of group rings of limit groups: quantifier elimination.

Let $L$ be a first-order language. Recall that a  formula $\phi$ in $L$ is in a prenex normal form if $\phi = Q_1 y_1Q_2y_2 \ldots Q_sy_s\phi_0(x_1, \ldots,x_m)$ where $Q_i$ are quantifiers ($\forall$ or $\exists$), and $\phi_0$ is a quantifier-free formula in $L$. It is known that every formula in $L$ is equivalent to a formula in the prenex normal form.
A formula $\phi = Q_1 y_1Q_2y_2 \ldots Q_sy_s\phi_0(x_1, \ldots,x_m)$ in the prenex normal form is called $\Sigma_n$ formula if the sequence of quantifiers  $ Q_1 Q_2 \ldots Q_s$ begins with the  existential quantifier $\exists$  and alternates  $n-1$  times between series of existential and universal quantifiers. Similarly,  a formula $\phi$ above is $\Pi_n$ formula if the sequence of quantifiers  $ Q_1 Q_2 \ldots Q_s$ begins with the  universal  quantifier $\forall$  and alternates $n-1$  times between series of existential and universal quantifiers. 

For a structure  $\mathcal S$  of the language $L$ denote by $\Sigma_n(\mathcal S)$ the  set of all subsets of ${\mathcal S}^m$, $m \in \N$, definable in $\mathcal S$ by  $\Sigma_n$ formulas  $\phi(x_1, \ldots,x_m)$, $m \in \N$. Replacing in the definition above  $\Sigma_n$ by $\Pi_n$  one gets the set $\Pi_n(\mathcal S)$.  Let  $\Sigma_0(\mathcal S) = \Pi_0(\mathcal S)$ be the set of all subsets definable in $\mathcal S$ by quantifier-free formulas.  Clearly,
$$
\Sigma_0(\mathcal S) \subseteq  \Sigma_0(\mathcal S) \subseteq \ldots \subseteq \Sigma_n(\mathcal S) \subseteq \ldots
$$
$$
\Pi_0(\mathcal S) \subseteq  \Pi_0(\mathcal S) \subseteq \ldots \subseteq \Pi_n(\mathcal S) \subseteq \ldots
$$
 The sets $\Sigma_n(\mathcal S)$ and $\Pi_n(\mathcal S)$ form the so-called {\em arithmetical hierarchy} oven $\mathcal S$ denoted by ${\mathcal H}(\mathcal S)$.  It is easy to see that if $\Sigma_n (\mathcal S) = \Sigma_{n+1}(\mathcal S)$ (or $\Pi_n (\mathcal S) = \Pi_{n+1}(\mathcal S)$) for some $n \in \N$ then $\Sigma_m (\mathcal S) = \Sigma_{m+1}(\mathcal S)$ and $\Pi_m (\mathcal S) = \Pi_{m+1}(\mathcal S)$ for every natural $m \geq n$.  We say that the hierarchy ${\mathcal H}(\mathcal S)$ {\em collapses} if $\Sigma_n (\mathcal S) = \Sigma_{n+1}(\mathcal S)$ for some $n \in \N$, otherwise it is called {\em proper}. 
 
 A unitary ring $R$ can be viewed in the ring language $+,\cdot,0,1$ and also in the ring language extended by new constants $c_r$ for each element $r\in R$. In the former case the arithmetical hierarchy is denoted by ${\mathcal H}(R)$, and in the latter one the arithmetical hierarchy is denoted by ${\mathcal H}^R(R)$.

\begin{theorem}
\label{th:elimination}
Let $G$ be a non-abelian limit group and $K$ an infinite field which is interpretable in $\N$. Then for any $n \in \N$ there is a formula $\phi_n$ of the first-order language of rings such that the formula $\phi_n$ is not equivalent in $K(G)$ to any boolean combination of formulas from $\Pi_n$ or $\Sigma_n$ (with constants from $K(G)$).
\end{theorem}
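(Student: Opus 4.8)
The plan is to deduce Theorem \ref{th:elimination} from Kleene's classical theorem that the arithmetical hierarchy ${\mathcal H}(\N)$ over $\N$ does not collapse, transporting non-collapse into $K(G)$ through the bi-interpretability results of Section \ref{biint}. Fix $n$ and put $m=\max(n,c_0-1)+1$, where $c_0$ is an absolute constant fixed below. It suffices to produce a formula $\phi_n$ of the ring language defining a subset of $K(G)$ that is not $\Pi_m$-definable in $K(G)$ even allowing constants from $K(G)$: indeed every boolean combination of $\Pi_n$- and $\Sigma_n$-formulas with constants defines a set that is both $\Sigma_m$- and $\Pi_m$-definable with constants (because $n\le m-1$, so $\Sigma_n\cup\Pi_n\subseteq\Sigma_m\cap\Pi_m$ with constants, and the class $\Sigma_m\cap\Pi_m$ with constants is closed under boolean operations), so no such combination can be equivalent in $K(G)$ to $\phi_n$.

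The interpretations I would use are the following. By Theorem \ref{th:bi1}, $K(G)$ is bi-interpretable with $S(K,\N)$; in particular $\N$ (the third sort of $S(K,\N)$) is $0$-interpretable in $K(G)$, say as a definable copy $\N^{\ast}$, and $K(G)$ is $0$-interpretable in $S(K,\N)$. Since $K$ is interpretable in $\N$ and $S(\N,\N)$ is bi-interpretable with $\N$ (Lemma \ref{le:list-superstructure}), $S(K,\N)$ is $0$-interpretable in $\N$, and hence so is $K(G)$. Let $c_0$ bound the quantifier complexity of all these interpreting formulas and of the relativizations used below. The one nonroutine ingredient is that the ``round-trip'' isomorphism $\N\to\N^{\ast\ast}$ — the interpretation of $\N$ in $\N$ obtained by going through $K(G)$ — is $0$-definable in $\N$; I would extract this from the bi-interpretation of Theorem \ref{th:bi1} (which supplies a $0$-definable-in-$S(K,\N)$ isomorphism between $\N$ and its double interpretation through $K(G)$) by pushing that isomorphism along the interpretation of $S(K,\N)$ in $\N$ and composing with the $0$-definable order-isomorphism between $\N$ and any $0$-definable copy of $\N$ inside $\N$. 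Note that $K$ itself need not be bi-interpretable with $\N$ — the argument never has to ``recognise'' an element of $K$, only natural numbers — so non-rigid fields such as $\overline{\mathbb Q}$ cause no difficulty.

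With this in place, I would pick a $\Sigma_m$-complete set $S\subseteq\N$, defined in $\N$ by a $\Sigma_m$ formula $\theta(x)$ (its existence is Kleene's theorem), and let $\phi_n(z)$ be the formula of the ring language obtained from $\theta$ via Lemma \ref{Hodges} by relativising the quantifiers of $\theta$ to $\N^{\ast}$ inside $K(G)$ and using the $0$-definable bijection $\N^{\ast}\cong\N$; since $m>c_0$ this does not raise the quantifier level, so $\phi_n$ defines a $\Sigma_m$ set and $K(G)\models\phi_n(a)$ iff the natural number coded by $a$ lies in $S$. If the set defined by $\phi_n$ were $\Pi_m$-definable in $K(G)$ with constants $\bar c\in K(G)$, I would pull this description back along the interpretation of $K(G)$ in $\N$ — the constants $\bar c$ turning into the tuple of their codes, which is $0$-definable in $\N$ being finite — and rewrite it via the $0$-definable round-trip $\N\to\N^{\ast\ast}$, obtaining a $\Pi_m$ formula of arithmetic defining $S$, contradicting the $\Sigma_m$-completeness of $S$. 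By the first paragraph, this proves the theorem.

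The step I expect to be the main obstacle is the complexity bookkeeping: one must verify carefully that relativising a $\Sigma_k$ (resp. $\Pi_k$) formula to a $\Delta_{c_0}$-definable domain and then substituting a $0$-definable map again yields a $\Sigma_k$ (resp. $\Pi_k$) formula for every $k\ge c_0$, so that the single constant $c_0$ works uniformly in $n$ and the finitely many low levels are absorbed by the shift $n\mapsto m$. The second delicate point is extracting from Theorem \ref{th:bi1} exactly the $0$-definability in $\N$ of the round-trip $\N\to\N^{\ast\ast}$ (this is what makes the argument go through for an arbitrary infinite field interpretable in $\N$), together with the observation that the finitely many constants from $K(G)$ appearing in a hypothetical boolean combination really do translate into finitely many — hence harmless — constants of $\N$.
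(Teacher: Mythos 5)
Your overall strategy is the same as the paper's: use the bi-interpretability chain $K(G)\rightarrow S(K,\N)\rightarrow \N$ (together with the definability in $\N$ of the round-trip $\N\to\N^{\ast\ast}$, which is indeed the only direction needed) to transfer the properness of the arithmetical hierarchy from $\N$ into $K(G)$. The paper does this as a pure proof by contradiction (assume every formula is a boolean combination of $\Pi_n/\Sigma_n$ with constants, translate an arbitrary arithmetic formula $\psi$ into $K(G)$ and back, and conclude that the hierarchy over $\N$ collapses at level $n+m$ for some constant $m$ depending only on the interpretations), while you try to exhibit a witness $\phi_n$ by pushing a $\Sigma_m$-complete set forward. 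Those two variants are interchangeable once the bookkeeping is right.

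The bookkeeping, however, is where there is a genuine gap. The step you flag as ``the main obstacle'' — namely that relativising a $\Sigma_k$ (resp.\ $\Pi_k$) formula to a $\Delta_{c_0}$-definable domain and substituting $0$-definable maps again yields a $\Sigma_k$ (resp.\ $\Pi_k$) formula whenever $k\ge c_0$ — is false, and the rest of the argument depends on it. Working from the innermost matrix outward, translating a prenex $\Sigma_k$ formula through an interpretation whose domain, equivalence relation, and atomic predicates are $\Delta_{c_0}$-definable produces a formula at level roughly $\Sigma_{k+c_0-1}$, not $\Sigma_k$: after the innermost block you are already at level $\Sigma_{c_0}$, and at the next block you face something like $\forall\bar x\,(\Pi_{c_0}\vee\Sigma_{c_0})$, which is $\Pi_{c_0+1}$ and not $\Pi_{c_0}$ since $\Sigma_{c_0}\not\subseteq\Pi_{c_0}$; the level then increments by one per remaining alternation. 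With your choice $m=\max(n,c_0-1)+1$ (so essentially $m=n+1$ for large $n$), the pullback to $\N$ of a hypothetical $\Pi_m$-definition of the set defined by $\phi_n$ would only place $S$ in $\Pi_{m+c_0-1}$ or so; but $S$ is $\Sigma_m$-complete and $\Sigma_m\subseteq\Pi_{m+1}\subseteq\Pi_{m+c_0-1}$, so no contradiction results. The fix is exactly the move the paper makes: do not try to hold the level constant, but absorb an additive shift by taking $m=n+C$ for a constant $C$ large enough to dominate the complexity of all interpreting formulas and of the round-trip isomorphism. With that change your construction works; as written it does not.
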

\begin{proof} Since $K$ is interpretable in $\N$ it follows that $S(K,\N)$ is interpretable in $\N$ and, therefore, $S(K,\N)$ is bi-interpretable with $\N$. Since $K(G)$ is bi-interpretable with $S(K,\N)$, $K(G)$ is bi-interpretable with $\N$.

Suppose, to the contrary, that for some $n \in \N$ any formula  $\phi (\bar x)$ in the ring language $L$ is equivalent in $K(G)$ to some boolean combination  $\phi'(\bar x)$ of formulas from $\Pi_n$ or $\Sigma_n$ with constants from $K(G)$. Take  an arbitrary first-order formula $\psi(\bar z)$ of the language of $\N$. Since $K(G)$ is bi-interpretable in $\N$ the  formula $\psi(\bar z)$ can be rewritten into  a formula $\phi (\bar x)$ of the ring language such that for any values $\bar a$ of $\bar z$, $\N \models \psi(\bar a) \Longleftrightarrow K(G) \models \phi (\bar b)$, where $\bar a\rightarrow\bar b$ when $\N$ is interpreted in $K(G)$. By our assumption there is a formula $\phi'(\bar x)$, which is a boolean combination   of formulas from $\Pi_n$ or $\Sigma_n$ perhaps with constants from $K(G)$ such that $\phi(\bar x)$ is equivalent to $\phi'(\bar x)$ in $K(G)$. Since $K(G)$ is bi-interpretable in $\N$  there is a number $m$ which depends only on the bi-interpretation such that $\phi'(\bar x)$ can be rewritten into a formula $\psi'(\bar z)$, which is a boolean combination of formulas from $\Pi_{n+m}$ or $\Sigma_{n+m}$  in the language of $\N$ such that $\N \models \psi'(\bar a) \Longleftrightarrow K(G) \models \phi'(\bar b)$. It follows that  $\psi (\bar z)$ is equivalent to $\psi '(\bar z)$ in $\N$, i.e., every formula $\psi$ of the language of $\N$ is equivalent in $\N$ to some formula $\psi '$ which is a boolean combination of formulas from $\Pi_{n+m}$ in the language of $\N$. However, this is false since the arithmetical hierarchy in $\N$ is proper. It follows that our assumption is false, so the theorem holds.
\end{proof}
\begin{cor}
Let $K$ be an infinite field which is interpretable in $\N$ and $G$ a non-abelian limit group. Then the hierarchies ${\mathcal H}(K(G))$ and ${\mathcal H}^{K(G)}(K(G))$ are proper.
\end{cor}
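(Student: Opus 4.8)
The plan is to read the Corollary off from Theorem~\ref{th:elimination} by the standard collapse argument for the arithmetical hierarchy, applied once with constants allowed and once without. First I would record the elementary closure properties of a collapsed hierarchy. If $\Sigma_k(\mathcal S) = \Sigma_{k+1}(\mathcal S)$ for some $k$ (the definition of collapse used in the paper), then taking complements gives $\Pi_k(\mathcal S) = \Pi_{k+1}(\mathcal S)$; combining this with the obvious inclusions $\Pi_k(\mathcal S) \subseteq \Sigma_{k+1}(\mathcal S)$ and $\Sigma_k(\mathcal S) \subseteq \Pi_{k+1}(\mathcal S)$ (obtained by prepending a dummy quantifier to a prenex formula) yields $\Sigma_k(\mathcal S) = \Pi_k(\mathcal S)$. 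This common class is then a boolean algebra and equals $\Sigma_m(\mathcal S) = \Pi_m(\mathcal S)$ for every $m \ge k$, so in particular it contains every subset of $\mathcal S^m$ that is definable at all. This reasoning is valid verbatim for $\mathcal H(K(G))$ and for $\mathcal H^{K(G)}(K(G))$, the only difference being whether constants from $K(G)$ are permitted in the formulas.

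Next I would argue by contradiction. Suppose $\mathcal H(K(G))$ collapses, say at level $k$. Apply Theorem~\ref{th:elimination} with $n = k$ to obtain a formula $\phi_k$ of the ring language that is not equivalent in $K(G)$ to any boolean combination of $\Pi_k$- and $\Sigma_k$-formulas with constants from $K(G)$. The set $S = \{\bar b \mid K(G) \models \phi_k(\bar b)\}$ is definable in $K(G)$ without constants, hence lies at some finite level of $\mathcal H(K(G))$, and therefore, by the collapse, lies in $\Sigma_k(K(G))$. Thus $\phi_k$ is equivalent in $K(G)$ to a $\Sigma_k$-formula (even without constants), which is in particular a boolean combination of $\Sigma_k$-formulas with constants; this contradicts the choice of $\phi_k$. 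Hence $\mathcal H(K(G))$ is proper. The argument for $\mathcal H^{K(G)}(K(G))$ is identical: a collapse at level $k$ would place the set $S$ defined by $\phi_k$ in $\Sigma_k^{K(G)}(K(G))$, making $\phi_k$ equivalent in $K(G)$ to a $\Sigma_k$-formula with constants, again contradicting Theorem~\ref{th:elimination}.

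I do not expect a genuine obstacle here; the substance is entirely in Theorem~\ref{th:elimination}, whose proof transfers the properness of the arithmetical hierarchy over $\N$ through the bi-interpretability of $K(G)$ with $\N$ (Theorem~\ref{th:bi1} and Lemma~\ref{le:list-superstructure}). The only point that deserves an explicit sentence is that Theorem~\ref{th:elimination} is stated for boolean combinations, which is strictly stronger than what the Corollary needs: a single $\Sigma_n$- or $\Pi_n$-formula, with or without constants, already suffices to run the contradiction, so the implication from the theorem to the Corollary is immediate once the closure properties of a collapsed level are in hand.
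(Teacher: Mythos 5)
Your derivation is correct and is precisely the argument the paper leaves implicit: the Corollary is stated without proof as an immediate consequence of Theorem~\ref{th:elimination}, and your chain (collapse at level $k$ implies every definable set lies in $\Sigma_k$, hence $\phi_k$ would be a single $\Sigma_k$-formula, which is a fortiori a boolean combination with constants, contradicting the theorem) is exactly the intended reading. The closure observations in your first paragraph are slightly more than is needed — the paper already records that a collapse at one level propagates upward — but they are harmless and correct.
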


 We would like to thank the referee for carefully reading our manuscript and for giving  constructive comments which substantially helped improving the quality of the paper.

\end{document}